\documentclass[reqno]{amsart} 
\usepackage{amsmath,amsthm,amssymb,amsfonts}
\usepackage{mathrsfs}
\usepackage{longtable}
\numberwithin{equation}{section}
\usepackage{color}
\usepackage{enumerate}
\usepackage{multirow} 
\newtheoremstyle{mystyle}
{}
{}
{\normalfont}
{ }
{\bfseries}
{}
{10pt}
{ }
\theoremstyle{mystyle}
\newtheorem{theorem}{Theorem}
\newtheorem{proposition}{Proposition}
\newtheorem{lemma}{Lemma}
\newtheorem{remark}{Remark}

\usepackage{mathtools}
\usepackage[pdftex]{graphicx}

\def\Diag{\mathop{\rm Diag}\nolimits}
\def\tr{\mathop{\rm tr}\nolimits}
\def\vec{\mathop{\rm vec}\nolimits}
\def\vech{\mathop{\rm vech}\nolimits}
\def\det{\mathop{\rm det}\nolimits}
\def\rank{\mathop{\rm rank}\nolimits}

\allowdisplaybreaks[4]
\usepackage[foot]{amsaddr}
\usepackage[top=3cm, bottom=3cm, left=2.5cm, right=2.5cm]{geometry}

\usepackage[hang,small,bf]{caption}
\usepackage[subrefformat=parens]{subcaption}
\captionsetup{compatibility=false}
\usepackage{comment}
\hyphenpenalty=500\relax 
\exhyphenpenalty=500\relax
\sloppy
\large
\title[SEM for diffusion processes with jumps]{Statistical inference in SEM for diffusion processes with jumps based on high-frequency data}
\author[S. Kusano]{Shogo Kusano $^{1}$}
\author[M. Uchida]{Masayuki Uchida $^{2,3,4}$}
\address{$^{1}$ Faculty of Advanced Science and Technology, Kumamoto University, Kumamoto, Japan}
\address{$^{2}$ Graduate School of Engineering Science, The University of Osaka, Toyonaka, Japan}
\address{$^{3}$ Center for Mathematical Modeling and Data Science (MMDS), The University of Osaka, Toyonaka, Japan}
\address{$^{4}$ CREST, Japan Science and Technology Agency, Tokyo, Japan}
\begin{document}
\begin{abstract}
\fontsize{8pt}{10pt}\selectfont
We study structural equation modeling (SEM) for diffusion processes with jumps. Based on high-frequency data, we consider the parameter estimation 
and the goodness-of-fit test in the SEM. Using a threshold method, 
we propose the quasi-likelihood of the SEM and prove that the quasi-maximum likelihood estimator has consistency and asymptotic normality. To examine whether a specified parametric model is correct or not, we also construct the quasi-likelihood ratio test statistics and investigate the asymptotic properties. Furthermore, numerical simulations are conducted.
\end{abstract}
\keywords{Structural equation modeling; Diffusion processes with jumps; Quasi-maximum likelihood estimation; Goodness-of-fit test; Asymptotic theory; High-frequency data.}
\maketitle

\section{Introduction}
\fontsize{10pt}{16pt}\selectfont
\setlength{\abovedisplayskip}{8pt}
\setlength{\belowdisplayskip}{8pt}
We consider structural equation modeling (SEM) for diffusion processes with jumps. Let $(\Omega, \mathcal{F},(\mathcal{F}_t)_{t\geq 0}, {\bf{P}})$ be a stochastic basis. The $p_1$-dimensional observable process $\{X_{1,t}\}_{t\geq 0}$ is defined by the following factor model:
\begin{align}
    X_{1,t}&={\bf{\Lambda}}_{1}\xi_t+\delta_t, \label{X1}
\end{align}
where 
$\{\xi_t\}_{t\geq 0}$ and $\{\delta_t\}_{t\geq 0}$ are $k_1$ and $p_1$-dimensional c{\`a}dl{\`a}g $(\mathcal{F}_t)$-adapted latent processes on the stochastic basis, $k_1\leq p_1$, and ${\bf{\Lambda}}_{1}\in\mathbb{R}^{p_1\times k_1}$ is a constant loading matrix. $\{\xi_t\}_{t\geq 0}$ and $\{\delta_t\}_{t\geq 0}$ satisfy the following stochastic differential equations:
\begin{align}
    d\xi_{t}&=a_{1}(\xi_{t-})dt+{\bf{S}}_{1}d W_{1,t}+\int_{E_1}c_1(\xi_{t-},z_1)p_1(dt,dz_1),\quad \xi_0=x_{1,0}, \label{xi}\\
    d\delta_{t}&=a_{2}(\delta_{t-})dt+{\bf{S}}_{2}d W_{2,t}+\int_{E_2}c_2(\delta_{t-},z_2)p_2(dt,dz_2),\quad \delta_0=x_{2,0}, \label{delta}
\end{align}
where $E_1=\mathbb{R}^{k_1} \backslash \{0\}$, $E_2=\mathbb{R}^{p_1} \backslash \{0\}$, and $x_{1,0}\in\mathbb{R}^{k_1}$ and $x_{2,0}\in\mathbb{R}^{p_1}$ are 
non-random vectors. $\{W_{1,t}\}_{t\geq 0}$ and $\{W_{2,t}\}_{t\geq 0}$ are $r_1$ and $r_2$-dimensional standard $(\mathcal{F}_t)$-Wiener processes, respectively. $p_1(dt,dz_1)$ and $p_2(dt,dz_2)$ are Poisson random measures on $\mathbb{R}_{+}\times E_1$ and $\mathbb{R}_{+}\times E_2$ with compensator $q_1(dt,dz_1)={\bf{E}}\bigl[p_1(dt,dz_1)\bigr]$ and $q_2(dt,dz_2)={\bf{E}}\bigl[p_2(dt,dz_2)\bigr]$, respectively. The functions $a_1:\mathbb{R}^{k_1}\longrightarrow\mathbb{R}^{k_1}$, $a_2:\mathbb{R}^{p_1}\longrightarrow\mathbb{R}^{p_1}$, 
$c_1:\mathbb{R}^{k_1}\times E_1\longrightarrow\mathbb{R}^{k_1}$,
$c_2:\mathbb{R}^{p_1}\times E_2\longrightarrow\mathbb{R}^{p_1}$
are unknown Borel functions, and ${\bf{S}}_{1}\in\mathbb{R}^{k_1\times r_1}$ and ${\bf{S}}_{2}\in\mathbb{R}^{p_1\times r_2}$ are constant matrices. Set the volatility matrices of $\{\xi_t\}_{t\geq 0}$ and $\{\delta_t\}_{t\geq 0}$ as ${\bf{\Sigma}}_{\xi\xi}={\bf{S}}_{1}{\bf{S}}_{1}^{\top}$ and 
${\bf{\Sigma}}_{\delta\delta}={\bf{S}}_{2}{\bf{S}}_{2}^{\top}$, where $\top$ stands for the transpose. The $p_2$-dimensional observable process $\{X_{2,t}\}_{t\geq 0}$ satisfies the factor model as follows:
\begin{align}
    X_{2,t}&={\bf{\Lambda}}_{2}\eta_t+\varepsilon_t,
\end{align}
where $\{\eta_{t}\}_{t\geq 0}$ is a $k_2$-dimensional latent process, $\{\varepsilon_t\}_{t\geq 0}$ is a $p_2$-dimensional c{\`a}dl{\`a}g $(\mathcal{F}_t)$-adapted latent process on the stochastic basis, $k_2\leq p_2$, and ${\bf{\Lambda}}_{2}\in\mathbb{R}^{p_2\times k_2}$ is a constant loading matrix. In addition, we express the relationship between $\{\xi_{t}\}_{t\geq 0}$ and $\{\eta_{t}\}_{t\geq 0}$ as follows:
\begin{align}
    \eta_t={\bf{B}}_0\eta_t+{\bf{\Gamma}}\xi_t+\zeta_t,
\end{align}
where $\{\zeta_t\}_{t\geq 0}$ is a $k_2$-dimensional c{\`a}dl{\`a}g $(\mathcal{F}_t)$-adapted latent process on the stochastic basis, ${\bf{B}}_0\in\mathbb{R}^{k_2\times k_2}$ is a constant loading matrix whose diagonal elements are zero, and ${\bf{\Gamma}}\in\mathbb{R}^{k_2\times k_1}$ is a constant loading matrix. We suppose that ${\bf{\Lambda}}_{1}$ and ${\bf{\Lambda}}_{2}$ are full column rank, and ${\bf{\Psi}}=\mathbb{I}_{k_2}-{\bf{B}}_0$ is non-singular, where $\mathbb{I}_{k_2}$ is the identity matrix of size $k_2$. $\{\varepsilon_t\}_{t\geq 0}$ and $\{\zeta_t\}_{t\geq 0}$ satisfy the following stochastic differential equations:
\begin{align}
    d\varepsilon_{t}&=a_{3}(\varepsilon_{t-})dt+{\bf{S}}_{3}d W_{3,t}+\int_{E_3}c_3(\varepsilon_{t-},z_3)p_3(dt,dz_3),\quad \varepsilon_0=x_{3,0}, \label{epsillon}\\
    d\zeta_{t}&=a_{4}(\zeta_{t-})dt+{\bf{S}}_{4}d W_{4,t}+\int_{E_4}c_4(\zeta_{t-},z_4)p_4(dt,dz_4),\quad \zeta_0=x_{4,0},\label{zeta}
\end{align}
where $E_3=\mathbb{R}^{p_2} \backslash \{0\}$, $E_4=\mathbb{R}^{k_2} \backslash \{0\}$, and $x_{3,0}\in\mathbb{R}^{p_2}$ and $x_{4,0}\in\mathbb{R}^{k_2}$ are non-random vectors. $\{W_{3,t}\}_{t\geq 0}$ and $\{W_{4,t}\}_{t\geq 0}$ are $r_3$ and $r_4$-dimensional standard $(\mathcal{F}_t)$-Wiener processes, respectively. $p_3(dt,dz_3)$ and $p_4(dt,dz_4)$ are Poisson random measures on $\mathbb{R}_{+}\times E_3$ and $\mathbb{R}_{+}\times E_4$ with compensator $q_3(dt,dz_3)={\bf{E}}\bigl[p_3(dt,dz_3)\bigr]$ and $q_4(dt,dz_4)={\bf{E}}\bigl[p_4(dt,dz_4)\bigr]$, respectively. The functions $a_3:\mathbb{R}^{p_2}\longrightarrow\mathbb{R}^{p_2}$, $a_4:\mathbb{R}^{k_2}\longrightarrow\mathbb{R}^{k_2}$, 
$c_3:\mathbb{R}^{p_2}\times E_3\longrightarrow\mathbb{R}^{p_2}$,
$c_4:\mathbb{R}^{k_2}\times E_4\longrightarrow\mathbb{R}^{k_2}$
are unknown Borel functions, and ${\bf{S}}_{3}\in\mathbb{R}^{p_2\times r_3}$ and ${\bf{S}}_{4}\in\mathbb{R}^{k_2\times r_4}$ are constant matrices. 
Define the volatility matrices of $\{\varepsilon_t\}_{t\geq 0}$ and $\{\zeta_t\}_{t\geq 0}$ 
as ${\bf{\Sigma}}_{\varepsilon\varepsilon}={\bf{S}}_{3}{\bf{S}}_{3}^{\top}$ and 
${\bf{\Sigma}}_{\zeta\zeta}={\bf{S}}_{4}{\bf{S}}_{4}^{\top}$, respectively.
Suppose that for any $t\geq 0$, $\mathcal{F}_t$, $\sigma(W_{i,u}-W_{i,t};\ u\geq t) \ (i=1,2,3,4)$, and
\begin{align*}
    \sigma\bigl(p_i(A_i\cap ((t,\infty)\times E_i)); A_i\subset\mathbb{R}_{+}\times E_i \mbox{\ is a Borel set}\bigr)\quad (i=1,2,3,4)
\end{align*}
are independent. For $i=1,2,3,4$, we assume that $q_i(dt,dz_i)$ has a representation such that $q_i(dt,dz_i)=f_i(z_i)dz_idt$, and $f_i(z_i)=\lambda_{i,0}F_i(z_i)$, where $\lambda_{i,0}>0$ is a unknown value, and $F_i(z_i)$ is a unknown probability density. Set $p=p_1+p_2$, and $X_t=(X_{1,t}^{\top},X_{2,t}^{\top})^{\top}$. $(X_{t_i^n})_{i=0}^n$ are discrete observations, where $t_i^n=ih_n$. 
We suppose that $T=nh_n$ is fixed. Let
\begin{align*}
    {\bf{\Sigma}}=\begin{pmatrix}
    {\bf{\Sigma}}^{11} & {\bf{\Sigma}}^{12}\\
    {\bf{\Sigma}}^{12\top} & {\bf{\Sigma}}^{22}
    \end{pmatrix},
\end{align*}
where
\begin{align*}
    {\bf{\Sigma}}^{11}&={\bf{\Lambda}}_{1}{\bf{\Sigma}}_{\xi\xi}{\bf{\Lambda}}_{1}^{\top}+{\bf{\Sigma}}_{\delta\delta},\\
    {\bf{\Sigma}}^{12}&={\bf{\Lambda}}_{1}{\bf{\Sigma}}_{\xi\xi}{\bf{\Gamma}}^{\top}{\bf{\Psi}}^{-1\top}{\bf{\Lambda}}_{2}^{\top},\\
    {\bf{\Sigma}}^{22}&={\bf{\Lambda}}_{2}{\bf{\Psi}}^{-1}({\bf{\Gamma}}{\bf{\Sigma}}_{\xi\xi}{\bf{\Gamma}}^{\top}+{\bf{\Sigma}}_{\zeta\zeta}){\bf{\Psi}}^{-1\top}{\bf{\Lambda}}_{2}^{\top}+{\bf{\Sigma}}_{\varepsilon\varepsilon}.
\end{align*}
We assume that ${\bf{\Sigma}}$ is positive definite. For example, when ${\bf{\Sigma}}_{\delta\delta}$ and ${\bf{\Sigma}}_{\varepsilon\varepsilon}$ are positive definite, ${\bf{\Sigma}}$ is positive definite. ${\bf{P}}_{\bf{\Sigma}}$ denotes the law of the process $\{X_t\}_{t\geq 0}$ defined by (\ref{X1})-(\ref{zeta}).

SEM is a statistical method that analyzes the relationships between unobservable variables and has been used in a wide range of fields, e.g., psychology, biology, economics, and so on. The unobservable variables are called latent variables. Intelligence and economic conditions are one of the most important examples of latent variables. There are a lot of studies on SEM. For instance, see Shapiro
\cite{Shapiro(1983)}, \cite{Shapiro(1985)} for statistical inference, Jacobucci \cite{Jacobucci(2016)} and Huang \cite{Huang(2017)} for sparse inference, Huang \cite{Huang AIC(2017)} for model selection, and Czir{\' a}ky \cite{Cziraky(2004)} for SEM for time series data.

In recent years, we can easily access high-frequency data thanks to the development of information technology. For that reason, a lot of researchers have studied statistical inference for diffusion processes based on discrete observations; see, e.g., Yoshida \cite{Yoshida(1992)}, Kessler \cite{kessler(1997)}, Uchida and Yoshida \cite{Uchi-Yoshi(2012)} and references therein. Statistical inference for jump-diffusion processes has been also studied intensively
because we often observe discontinuous sample paths. As a result, jump-diffusion models have been used for modeling in various fields, including financial econometrics, physics and hydrology. Shimizu and Yoshida \cite{Shimizu-Yoshida_JP, Shimizu(2006)} investigated statistical inference for ergodic diffusion processes with jumps based on discrete observations. Using a threshold method based on the increment, they construct a quasi-likelihood and show the quasi-maximum likelihood estimator has consistency and asymptotic normality.
Notably, Mancini \cite{Mancini(2004)} independently developed a consistent estimator for the characteristics of jumps in the Poisson-diffusion model.
Ogihara and Yoshida \cite{Ogihara(2011)} proved the convergence of moments of the quasi-maximum likelihood estimator. For statistical inference of ergodic jump diffusion processes, see also Amorino and Gloter \cite{Amorino(2020), Amorino(2021)}. For another filtering method, see, e.g., Inatsugu and Yoshida \cite{Inatsugu(2021)}.

Recently, Kusano and Uchida \cite{Kusano(JJSD)} proposed SEM for diffusion processes. By using this method, we can examine the relationships between latent processes based on high-frequency data. For more details on the SEM, we also refer readers to Kusano and Uchida \cite{Kusano(2024c), Kusano(2024e)}. On the other hand, it is not appropriate to use this method when observed data may contain jumps. That is because Kusano and Uchida \cite{Kusano(JJSD)} suppose that 
the latent processes are diffusion processes, and do not consider jumps. For example, 
we cannot use the result of  Kusano and Uchida \cite{Kusano(JJSD)} for the data as shown in Figure \ref{jump}.
As mentioned above, jump-diffusion models are regarded as powerful tools to express a wide range of stochastic phenomena.
Therefore, we propose SEM for diffusion processes with jumps.

This paper is organized as follows. In Section \ref{notation}, we introduce notation and assumptions. In Section \ref{Main Theorem}, we consider the statistical inference of SEM for diffusion processes with jumps. First, we prepare some lemmas to justify the use of a threshold method. Using the threshold method, we define the estimator of ${\bf{\Sigma}}$, and investigate the asymptotic properties. Next, the quasi-likelihood of the SEM is proposed, and it is shown that the quasi-maximum likelihood estimator has consistency and asymptotic normality. Furthermore, to examine whether a specified parametric model is correct or not, we study the quasi-likelihood ratio test of the SEM. In Section \ref{Simulation}, we give examples and simulation results. Section \ref{Proof} is devoted to the proofs of the results stated in Section \ref{Main Theorem}.
\begin{figure}[h]
    \centering
    \includegraphics[width=0.85\columnwidth]{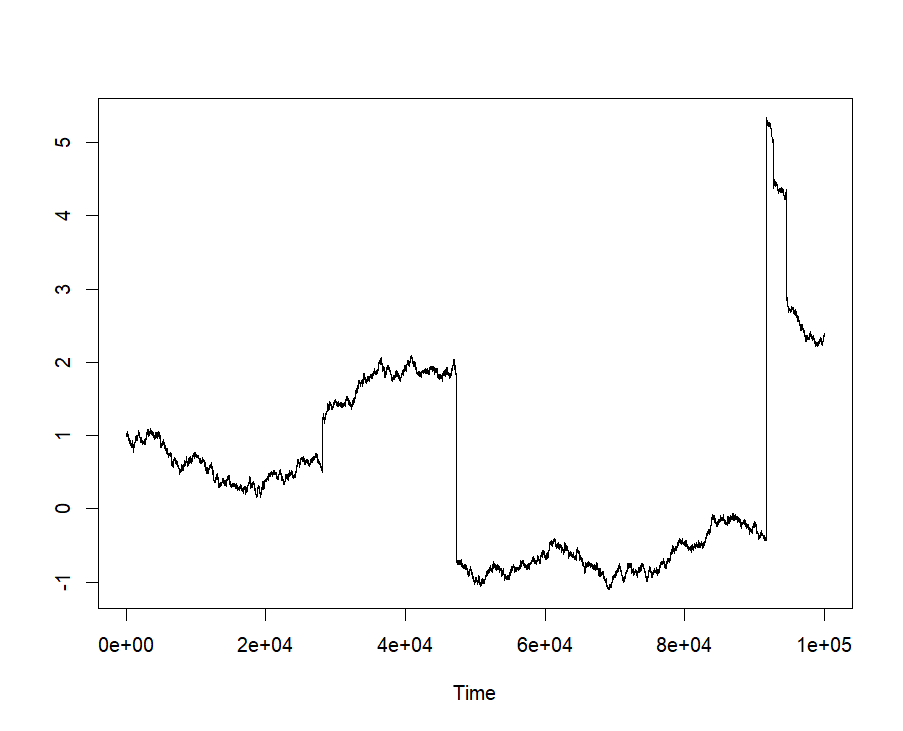}
\caption{Sample path from a diffusion process with jumps.}\label{jump}
\end{figure}


\section{Notation and Assumptions}\label{notation}
First, we introduce the notation as follows. For a vector $v$, $v^{(i)}$ stands for the $i$-th element of $v$ and let $|v|=\sqrt{\tr{vv^\top}}$. $\Diag v$ denotes the diagonal matrix whose $i$-th diagonal element is $v^{(i)}$. For a matrix $M$, $M_{ij}$ is the $(i,j)$-th component of $M$ and $|M|=\sqrt{\tr{MM^\top}}$. For a symmetric matrix $M$, $\vec M$ and $\vech M$ are the vectorization of $M$, and the half-vectorization of $M$, respectively.  For a $p$-dimensional symmetric matrix $M$, $\mathbb{D}_p$ is the $p^2\times \bar{p}$ matrix such that $\vec{M}=\mathbb{D}_{p}\vech{M}$, where $\bar{p}=p(p+1)/2$. Set $\mathbb{D}_p^{+}=\bigl(\mathbb{D}_p^{\top}\mathbb{D}_p\bigr)^{-1}
\mathbb{D}_p^{\top}$. Note that $\vech{M}=\mathbb{D}_{p}^{+}\vec{M}$; see, e.g., Harville \cite{Harville(1998)}. $\mathcal{M}_{p}^{+}$ denotes the set of all $p\times p$ real-valued positive definite matrices. For a positive sequence $u_{n}$, $R:[0,\infty)\times \mathbb{R}^d\rightarrow \mathbb{R}$ stands for the short notation for functions which satisfy $|R({u_{n}},x)|\leq u_{n}C(1+|x|)^C$ for some $C>0$. Let $ C^{k}_{\uparrow}(\mathbb R^{d})$ be the space of all functions $f$ satisfying the following conditions:
\begin{itemize}
    \item[(i)] $f$ is continuously differentiable with respect to $x\in \mathbb{R}^d$ up to order $k$. 
    \item[(ii)] $f$ and all its derivatives are of polynomial growth in $x\in \mathbb{R}^d$, where $f$ is of polynomial growth in $x\in \mathbb{R}^d$ if 
    $f(x)=R(1,x)$.
\end{itemize}
Let $N_{d}(\mu,\Sigma)$ be a $d$-dimensional normal random variable with mean $\mu\in\mathbb{R}^d$ and covariance matrix $\Sigma\in\mathcal{M}_d^{+}$. $Z_d$ denotes a $d$-dimensional standard normal random variable. $\chi^2_{r}$ represents the random variable which has the chi-squared distribution with $r$ degrees of freedom, and $\chi^2_{r}(\alpha)$ stands for an upper $\alpha\in[0,1]$ point of $\chi^2_r$. The symbols $\stackrel{p}{\longrightarrow}$ and $\stackrel{d}{\longrightarrow}$ express convergence in probability and convergence in distribution, respectively. For a stochastic process $\{S_t\}_{t\geq 0}$, $\Delta_i^n S=S_{t_i^n}-S_{t_{i-1}^n}$, $\Delta S_t=S_t-S_{t-}$, and $R_i(u_n,S)=R(u_n,S_{t_i^n})$. Set $(d_1,d_2,d_3,d_4)=(k_1,p_1,p_2,k_2)$. Define $\mathcal{F}_{i}^n=\mathcal{F}_{t_{i}^n}$ for $i=0,\ldots,n$. 
Let 
\begin{align*}
    R_{i-1}(u_n,\xi,\delta,\varepsilon,\zeta)=R_{i-1}({u_{n}},\xi)+
    R_{i-1}({u_{n}},\delta)+R_{i-1}({u_{n}},\varepsilon)+R_{i-1}({u_{n}},\zeta)
\end{align*}
and
\begin{align*}
    \tilde{R}_{i-1}(u_n,\xi,\delta,\varepsilon,\zeta)=1-R_{i-1}(u_n,\xi,\delta,\varepsilon,\zeta).
\end{align*}
Next, we make the following assumptions.
\begin{enumerate}
    \vspace{2mm}
    \item[\bf{[A1]}]
    For $i=1,2,3,4$, there exist $L_i>0$ and $\zeta_i(z_i)>0$ of at most polynomial growth in $z_i$ such that
    \begin{align*}
        |a_i(x_i)-a_i(y_i)|\leq L_i|x_i-y_i|,\ |c_i(x_i,z_i)-c_i(y_i,z_i)|\leq \zeta_i(z_i)|x_i-y_i|
    \end{align*}
    and
    \begin{align*}
        |c_i(x_i,z_i)|\leq \zeta_i(z_i)(1+|x_i|)
    \end{align*}
    for any $x_i,y_i\in \mathbb{R}^{d_i}$ and $z_i\in E_i$.
    \vspace{2mm}
    \item[\bf{[A2]}]
     For $i=1,2,3,4$, $a_i\in C_{\uparrow}^4(\mathbb{R}^{d_i})$.
     \vspace{2mm}
    \item[\bf{[A3]}] For $i=1,2,3,4$, there exist $r_i>0$ and $K_i>0$ such that
    \begin{align*}
        f_i(z_i){\bf{1}}_{\{|z_i|\leq r_i\}}\leq K_i|z_i|^{1-d_i}
    \end{align*}
    and
    \begin{align*}
        \int |z_i|^{l}f_i(z_i)dz_i<\infty
    \end{align*}
    for any $l\geq 1$.
    \vspace{2mm}
    \item[\bf{[A4]}]
    For $i=1,2,3,4$, it holds that
    \begin{align*}
        \inf_{x_i}|c_i(x_i,z_i)|\geq c_{i,0}|z_i|
    \end{align*}
    for some $c_{i,0}>0$ near the origin.
\end{enumerate}
\begin{remark}
{\bf{[A1]}} and {\bf{[A3]}} deduce
{\setlength{\abovedisplayskip}{8pt}
\setlength{\belowdisplayskip}{8pt}
\begin{align}
    \sup_{t\in[0,T]}{\bf{E}}\bigl[|\xi_{t}|^{l}\bigr]<\infty,\  
    \sup_{t\in[0,T]}{\bf{E}}\bigl[|\delta_{t}|^{l}\bigr]<\infty,\
    \sup_{t\in[0,T]}{\bf{E}}\bigl[|\varepsilon_{t}|^{l}\bigr]<\infty,\
    \sup_{t\in[0,T]}{\bf{E}}\bigl[|\zeta_{t}|^{l}\bigr]<\infty
    \label{moment}
\end{align}
for any $l\geq 0$ when $T$ is fixed; see, e.g., Platen and Bruti-Liberati \cite{Platen(2010)}.} In this paper, we only state the asymptotic results 
in the case where $T$ is fixed. On the other hand, 
we can also consider the case where $h_n\longrightarrow 0$, $T=nh_n\longrightarrow\infty$ and $nh_n^2\longrightarrow 0$ as $n\longrightarrow\infty$. In this case, we need to assume some regularity conditions including (\ref{moment}).
\end{remark}
\section{Main Theorem}\label{Main Theorem}
First, we consider the estimation of ${\bf{\Sigma}}$. Set the true values of ${\bf{\Lambda}}_{1}$, ${\bf{\Lambda}}_{2}$, ${\bf{\Gamma}}$, ${\bf{\Psi}}$, ${\bf{\Sigma}}_{\xi\xi}$, ${\bf{\Sigma}}_{\delta\delta}$, ${\bf{\Sigma}}_{\varepsilon\varepsilon}$ and ${\bf{\Sigma}}_{\zeta\zeta}$ as 
${\bf{\Lambda}}_{1,0}$, ${\bf{\Lambda}}_{2,0}$, ${\bf{\Gamma}}_0$, ${\bf{\Psi}}_0$, ${\bf{\Sigma}}_{\xi\xi,0}$, ${\bf{\Sigma}}_{\delta\delta,0}$, ${\bf{\Sigma}}_{\varepsilon\varepsilon,0}$ and ${\bf{\Sigma}}_{\zeta\zeta,0}$, respectively. Note that the true value of ${\bf{\Sigma}}$ is expressed as 
\begin{align*}
    {\bf{\Sigma}}_0=\begin{pmatrix}
    {\bf{\Sigma}}_0^{11} & {\bf{\Sigma}}_0^{12}\\
    {\bf{\Sigma}}_0^{12\top} & {\bf{\Sigma}}_0^{22}
    \end{pmatrix},
\end{align*}
where
\begin{align*}
    {\bf{\Sigma}}_0^{11}&={\bf{\Lambda}}_{1,0}{\bf{\Sigma}}_{\xi\xi,0}{\bf{\Lambda}}_{1,0}^{\top}+{\bf{\Sigma}}_{\delta\delta,0},\\
    {\bf{\Sigma}}_0^{12}&={\bf{\Lambda}}_{1,0}{\bf{\Sigma}}_{\xi\xi,0}{\bf{\Gamma}}_0^{\top}{\bf{\Psi}}_0^{-1\top}{\bf{\Lambda}}_{2,0}^{\top},\\
    {\bf{\Sigma}}_0^{22}&={\bf{\Lambda}}_{2,0}{\bf{\Psi}}_0^{-1}({\bf{\Gamma}}_0{\bf{\Sigma}}_{\xi\xi,0}{\bf{\Gamma}}_0^{\top}+{\bf{\Sigma}}_{\zeta\zeta,0}){\bf{\Psi}}_0^{-1\top}{\bf{\Lambda}}_{2,0}^{\top}+{\bf{\Sigma}}_{\varepsilon\varepsilon,0}.
\end{align*}
In this paper, we judge whether jumps occur or not in the interval $(t_{i-1}^n,t_i^n]$ from the increment $|\Delta_i^n X|$; see, e.g., Shimizu and Yoshida \cite{Shimizu(2006)} and Ogihara and Yoshida \cite{Ogihara(2011)}. In other words, if $|\Delta_i^n X|$ exceeds $Dh_n^{\rho}$, we judge that a jump occurs in the interval, and if not, we consider that no jumps occur in the interval, where $\rho\in[0,1/2)$ and $D>0$. Set two random times as
\begin{align*}
    \tau_i^n&=\inf\bigl\{t\in[t_{i-1}^n,t_i^n);\ |\Delta \xi_t|>0 \lor |\Delta \delta_t|>0\lor |\Delta \varepsilon_t|>0\lor |\Delta \zeta_t|>0 \bigr\}
\end{align*}
and
\begin{align*}  
    \eta_i^n&=\sup\bigl\{t\in[t_{i-1}^n,t_i^n);\ |\Delta \xi_t|>0 \lor |\Delta \delta_t|>0\lor |\Delta \varepsilon_t|>0\lor |\Delta \zeta_t|>0 \bigr\}
\end{align*}
for $i=1,\ldots,n$. Here, we define that $\tau_i^n=\eta_i^n=t_i^n$ if the infimum or supremum on the right-hand side does not exist. 
Note that $\tau_i^n$ and $\eta_i^n$ mean the first 
and the last jump time in the interval $[t_{i-1}^n,t_i^n)$, respectively. For the two random times, the following lemma is shown. 
\begin{lemma}\label{Pine}
Under ${\bf{[A1]}}$ and ${\bf{[A3]}}$, for $\rho\in[0,1/2)$, $D>0$ and all $p\geq 1$,
\begin{align}       
    {\bf{P}}_{{\bf{\Sigma}}_0}\biggl(\sup_{t\in[t_{i-1}^n,\tau_i^n)}|X_t-X_{t_{i-1}^n}|>Dh_n^{\rho}\Bigl|\mathcal{F}_{i-1}^n\biggr)=
    R_{i-1}(h_n^p,\xi,\delta,\varepsilon,\zeta) \label{supine1}
\end{align}
and
\begin{align}
    {\bf{P}}_{{\bf{\Sigma}}_0}
    \biggl(\sup_{t\in[\eta_{i}^n,t_i^n)}|X_{t_{i}^n}-X_t|
    >Dh_n^{\rho}\Bigl|\mathcal{F}_{i-1}^n\biggr)=R_{i-1}(h_n^p,\xi,\delta,\varepsilon,\zeta)\ \
    \label{supine2}
\end{align}
for $i=1,\ldots,n$, where $\sup\phi=-\infty$.
\end{lemma}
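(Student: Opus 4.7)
The plan is to reduce both estimates to classical moment bounds for continuous semimartingales, exploiting that, by the very definition of $\tau_i^n$ and $\eta_i^n$, none of the processes $\xi,\delta,\varepsilon,\zeta$ has a jump on $[t_{i-1}^n,\tau_i^n)$ or on $(\eta_i^n,t_i^n)$. Once a conditional $L^{2m}$-bound of order $h_n^m$ is in hand, Markov's inequality combined with $\rho<1/2$ will yield the stated $h_n^p$ decay by taking $m$ large.

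First, I would unpack $X_t-X_{t_{i-1}^n}$ via the factor structure: since $X_{1,t}={\bf{\Lambda}}_{1}\xi_t+\delta_t$ and $X_{2,t}={\bf{\Lambda}}_{2}{\bf{\Psi}}^{-1}({\bf{\Gamma}}\xi_t+\zeta_t)+\varepsilon_t$, there is a deterministic constant $C>0$, depending only on ${\bf{\Lambda}}_{1},{\bf{\Lambda}}_{2},{\bf{\Psi}},{\bf{\Gamma}}$, such that
\[
|X_t-X_{t_{i-1}^n}|\leq C\bigl(|\xi_t-\xi_{t_{i-1}^n}|+|\delta_t-\delta_{t_{i-1}^n}|+|\varepsilon_t-\varepsilon_{t_{i-1}^n}|+|\zeta_t-\zeta_{t_{i-1}^n}|\bigr).
\]
A union bound reduces (\ref{supine1}) to proving, for each $U\in\{\xi,\delta,\varepsilon,\zeta\}$, an analogue with $X$ replaced by $U$ and $D$ replaced by $D/(4C)$.

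For $U=\xi$ (the other cases being identical), I would introduce the auxiliary jumpless SDE $d\tilde\xi_s=a_1(\tilde\xi_s)\,ds+{\bf{S}}_{1}\,dW_{1,s}$ on $[t_{i-1}^n,t_i^n]$ with $\tilde\xi_{t_{i-1}^n}=\xi_{t_{i-1}^n}$. Since no jumps of $\xi$ occur on $[t_{i-1}^n,\tau_i^n)$, pathwise $\xi$ and $\tilde\xi$ agree on that interval, so
\[
\sup_{t\in[t_{i-1}^n,\tau_i^n)}|\xi_t-\xi_{t_{i-1}^n}|\leq\sup_{t\in[t_{i-1}^n,t_i^n]}|\tilde\xi_t-\xi_{t_{i-1}^n}|,
\]
and the right-hand side is taken over a deterministic time window of length $h_n$. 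Standard SDE estimates based on the Lipschitz/linear growth condition {\bf{[A1]}}, the Burkholder--Davis--Gundy inequality applied to the Brownian part, Gronwall's inequality, and the moment bound (\ref{moment}) then yield, for every $m\in\mathbb{N}$,
\[
{\bf{E}}\Bigl[\sup_{t\in[t_{i-1}^n,t_i^n]}|\tilde\xi_t-\xi_{t_{i-1}^n}|^{2m}\,\Big|\,\mathcal{F}_{i-1}^n\Bigr]\leq C_m\,h_n^m\,(1+|\xi_{t_{i-1}^n}|)^{C_m}.
\]
A conditional Markov inequality with threshold of order $h_n^\rho$ then gives a tail bound of order $h_n^{m(1-2\rho)}(1+|\xi_{t_{i-1}^n}|)^{C_m}$; since $\rho<1/2$, choosing $m\geq p/(1-2\rho)$ delivers the required $R_{i-1}(h_n^p,\xi)$ control. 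Summing over $U\in\{\xi,\delta,\varepsilon,\zeta\}$ concludes (\ref{supine1}).

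For (\ref{supine2}), the argument is symmetric: on $(\eta_i^n,t_i^n)$ no jumps occur, so each latent process again coincides with a jumpless SDE on this interval, started from the random value $U_{\eta_i^n}$. The only extra ingredient is the standard conditional moment bound ${\bf{E}}\bigl[\sup_{s\in[t_{i-1}^n,t_i^n]}|U_s|^l\bigm|\mathcal{F}_{i-1}^n\bigr]\leq C_l(1+|U_{t_{i-1}^n}|)^{C_l}$, which follows again from {\bf{[A1]}}, {\bf{[A3]}} and Gronwall, and allows me to absorb the random starting value into a polynomial factor in $|U_{t_{i-1}^n}|$ after conditioning. The main obstacle throughout is precisely the handling of the random boundaries $\tau_i^n$ and $\eta_i^n$; the coupling with a jumpless SDE is the cleanest way I know to sidestep them, reducing everything to moment estimates on a deterministic interval of length $h_n$.
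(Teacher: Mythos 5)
Your proposal is correct and follows essentially the same route as the paper: both decompose $X_t-X_{t_{i-1}^n}$ through the factor structure into the four latent increments with a deterministic constant, apply a union bound with threshold $Dh_n^{\rho}/4$, and reduce to a per-process tail estimate on $[t_{i-1}^n,\tau_i^n)$ and $[\eta_i^n,t_i^n)$. The only difference is that where the paper simply invokes Lemma 2.1 of Shimizu and Yoshida (2006) for that per-process bound, you prove it inline (no jumps before $\tau_i^n$, coupling with the jumpless SDE, BDG/Gronwall moment bounds of order $h_n^m$, and conditional Markov with $\rho<1/2$), which is precisely the standard argument behind the cited lemma.
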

Next, we define the following events:
\begin{align*}
    C_{i,j,0}^n&=\bigl\{J_{i,j}^n=0,\ |\Delta X_{i}^n|\leq Dh_n^{\rho}\bigl\},\quad C_{i,j,1}^n=\bigl\{J_{i,j}^n=1,\ |\Delta X_{i}^n|\leq Dh_n^{\rho}\bigl\},\\
    C_{i,j,2}^n&=\bigl\{J_{i,j}^n\geq 2,\ |\Delta X_{i}^n|\leq Dh_n^{\rho}\bigl\}, \quad 
    D_{i,j,0}^n=\bigl\{J_{i,j}^n=0,\ |\Delta X_{i}^n|> Dh_n^{\rho}\bigl\},\\
    D_{i,j,1}^n&=\bigl\{J_{i,j}^n=1,\ |\Delta X_{i}^n|> Dh_n^{\rho}\bigl\},\quad
    D_{i,j,2}^n=\bigl\{J_{i,j}^n\geq 2,\ |\Delta X_{i}^n|> Dh_n^{\rho}\bigl\}
\end{align*}
for $i=1,\ldots,n$ and $j=1,2,3,4$, where $J_{i,j}^n=p_j((t_{i-1}^n,t_i^n]\times E_j)$. Let
\begin{align*}
    C_{i,k_1,k_2,k_3,k_4}^n=C_{i,1,k_1}^n\cap C_{i,2,k_2}^n\cap C_{i,3,k_3}^n \cap C_{i,4,k_4}^n
\end{align*}
and
\begin{align*}
    D_{i,k_1,k_2,k_3,k_4}^n=D_{i,1,k_1}^n\cap D_{i,2,k_2}^n\cap D_{i,3,k_3}^n \cap D_{i,4,k_4}^n
\end{align*}
for $i=1,\ldots,n$ and $k_1,k_2,k_3,k_4=0,1,2$. 
Note that
\begin{align*}
    \bigl\{|\Delta X_{i}^n\bigr|\leq Dh_n^{\rho}\bigr\}&=
    \bigcup_{k_1,k_2,k_3,k_4=0,1,2}C_{i,k_1,k_2,k_3,k_4}^n
\end{align*}
and
\begin{align*}
    \bigl\{|\Delta X_{i}^n\bigr|> Dh_n^{\rho}\bigr\}&=
    \bigcup_{k_1,k_2,k_3,k_4=0,1,2}
    D_{i,k_1,k_2,k_3,k_4}^n
\end{align*}
for $i=1,\ldots,n$. Furthermore, we define the sets as follows:
\begin{align*}
    K_1&=\Bigl\{(k_1,k_2,k_3,k_4)\in\{0,1,2\}^4;\ 
    \mbox{One element of}\ k_1, k_2, k_3, \mbox{and}\  k_4\ \mbox{is}\ 1,\ \mbox{and the others are 0}\Bigr\},\\
    K_2&=\Bigl\{(k_1,k_2,k_3,k_4)\in\{0,1,2\}^4;\ 
    \mbox{Two elements of}\ k_1, k_2, k_3, \mbox{and}\  k_4\ \mbox{are}\ 1,\ \mbox{and the others are 0}\Bigr\},\\
    K_3&=\Bigl\{(k_1,k_2,k_3,k_4)\in\{0,1,2\}^4;\ 
    \mbox{Three elements of}\ k_1, k_2, k_3, \mbox{and}\  k_4\ \mbox{are}\ 1,\ \mbox{and the other is 0} \Bigr\}
\end{align*}
and
\begin{align*}
    K_4&=\Bigl\{(k_1,k_2,k_3,k_4)\in\{0,1,2\}^4;\ 
    \mbox{At least one of}\ k_1, k_2, k_3, \mbox{and}\  k_4\  \mbox{is}\ 2\Bigr\}.
\end{align*}
Below, we always suppose that $\rho\in[1/3,1/2)$. Using Lemma \ref{Pine}, we can obtain the following lemmas.
\begin{lemma}\label{Clemma}
Under ${\bf{[A1]}}$, ${\bf{[A3]}}$ and ${\bf{[A4]}}$, for a sufficiently large $n$,
\begin{align}
    &{\bf{P}}_{{\bf{\Sigma}}_0}\Bigl(C^{n}_{i,0,0,0,0}\big|\mathcal{F}_{i-1}^n\Bigr)=\tilde{R}_{i-1}(h_n,\xi,\delta,\varepsilon,\zeta),\label{C0}\\
    &{\bf{P}}_{{\bf{\Sigma}}_0}\Bigl(C^{n}_{i,1,1,1,1}\big|\mathcal{F}_{i-1}^n\Bigr)\leq\lambda_{1,0}\lambda_{2,0}\lambda_{3,0}\lambda_{4,0} h_n^4,\label{C1}\\
    &{\bf{P}}_{{\bf{\Sigma}}_0}\Bigl(C^{n}_{i,k_1,k_2,k_3,k_4}\big|\mathcal{F}_{i-1}^n\Bigr)=R_{i-1}(h_n^{\rho+1},\xi,\delta,\varepsilon,\zeta)\quad (k_1,k_2,k_3,k_4)\in K_1\label{CK1},\\
    &{\bf{P}}_{{\bf{\Sigma}}_0}\Bigl(C^{n}_{i,k_1,k_2,k_3,k_4}\big|\mathcal{F}_{i-1}^n\Bigr)\leq \Bigl(\max_{j=1,2,3,4}\lambda_{j,0}\Bigr)^2 h_n^2\quad (k_1,k_2,k_3,k_4)\in K_2,\label{CK2}\\
    & {\bf{P}}_{{\bf{\Sigma}}_0}\Bigl(C^{n}_{i,k_1,k_2,k_3,k_4}\big|\mathcal{F}_{i-1}^n\Bigr)\leq 
    \Bigl(\max_{j=1,2,3,4}\lambda_{j,0}\Bigr)^3 h_n^3\quad (k_1,k_2,k_3,k_4)\in K_3 \label{CK3}
\end{align}
and
\begin{align}
    {\bf{P}}_{{\bf{\Sigma}}_0}\Bigl(C^{n}_{i,k_1,k_2,k_3,k_4}\big|\mathcal{F}_{i-1}^n\Bigr)\leq \Bigl(\max_{j=1,2,3,4}\lambda_{j,0}^2\Bigr)h_n^2
    \quad (k_1,k_2,k_3,k_4)\in K_4\qquad \label{CK4}
\end{align}
for $i=1,\ldots,n$.
\end{lemma}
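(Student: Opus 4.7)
The plan is to exploit the conditional (on $\mathcal{F}^n_{i-1}$) statistics of the four mutually independent jump counts $J^n_{i,j}\sim\mathrm{Poisson}(\lambda_{j,0}h_n)$, which follow from the stated independence of the Poisson random measures from $\mathcal{F}_{t^n_{i-1}}$, together with Lemma \ref{Pine}. As preparatory bookkeeping I would record the elementary Poisson estimates
\begin{align*}
    {\bf{P}}_{{\bf{\Sigma}}_0}\!\bigl(J^n_{i,j}=0\mid\mathcal{F}^n_{i-1}\bigr)=e^{-\lambda_{j,0}h_n},\quad {\bf{P}}_{{\bf{\Sigma}}_0}\!\bigl(J^n_{i,j}=1\mid\mathcal{F}^n_{i-1}\bigr)\leq\lambda_{j,0}h_n,\quad {\bf{P}}_{{\bf{\Sigma}}_0}\!\bigl(J^n_{i,j}\geq 2\mid\mathcal{F}^n_{i-1}\bigr)\leq\lambda_{j,0}^2h_n^2,
\end{align*}
together with the joint factorisation across $j$ provided by the mutual independence of the four Poisson measures. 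With these in hand, (\ref{C1}), (\ref{CK2}), (\ref{CK3}) and (\ref{CK4}) follow by dropping the increment constraint $\{|\Delta X^n_i|\leq Dh_n^\rho\}$ from $C^n_{i,k_1,k_2,k_3,k_4}$ and multiplying the appropriate Poisson bounds across the coordinates; for (\ref{CK4}) it suffices to retain a single factor ${\bf{P}}_{{\bf{\Sigma}}_0}(J^n_{i,j_0}\geq 2\mid\mathcal{F}^n_{i-1})$ for any index $j_0$ with $k_{j_0}=2$.

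For (\ref{C0}) I would write
\begin{align*}
    {\bf{P}}_{{\bf{\Sigma}}_0}\bigl(C^n_{i,0,0,0,0}\bigm|\mathcal{F}^n_{i-1}\bigr)=\prod_{j=1}^{4}e^{-\lambda_{j,0}h_n}-{\bf{P}}_{{\bf{\Sigma}}_0}\Bigl(\bigcap_{j=1}^{4}\{J^n_{i,j}=0\}\cap\{|\Delta X^n_i|>Dh_n^\rho\}\Bigm|\mathcal{F}^n_{i-1}\Bigr).
\end{align*}
The first term equals $1-R_{i-1}(h_n,\xi,\delta,\varepsilon,\zeta)=\tilde{R}_{i-1}(h_n,\xi,\delta,\varepsilon,\zeta)$. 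On $\bigcap_{j=1}^{4}\{J^n_{i,j}=0\}$ all four latent processes are continuous on $[t^n_{i-1},t^n_i]$, so by the stated convention $\tau^n_i=t^n_i$ and $|\Delta X^n_i|\leq\sup_{t\in[t^n_{i-1},\tau^n_i)}|X_t-X_{t^n_{i-1}}|$; applying (\ref{supine1}) with $p$ large shows that the second term is $R_{i-1}(h_n^p,\xi,\delta,\varepsilon,\zeta)$, which is absorbed in $\tilde{R}_{i-1}$.

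The hard part is (\ref{CK1}), which demands an extra factor $h_n^\rho$ beyond the Poisson rate. Consider the representative element $(k_1,k_2,k_3,k_4)=(1,0,0,0)$; the other three cases in $K_1$ are handled symmetrically using the full column rank of $\Lambda_{2,0}$, the non-singularity of $\Psi_0$, and \textbf{[A3]}--\textbf{[A4]} for the relevant jump coefficient. On $C^n_{i,1,0,0,0}$ a single jump of $\xi$ occurs at some $\sigma\in(t^n_{i-1},t^n_i]$ with height $z\in E_1$, while $\delta,\varepsilon,\zeta$ remain continuous on $[t^n_{i-1},t^n_i]$, so $\tau^n_i=\eta^n_i=\sigma$. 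Combining (\ref{supine1}) and (\ref{supine2}), outside an event of conditional probability $R_{i-1}(h_n^p,\xi,\delta,\varepsilon,\zeta)$ the total continuous contribution to $\Delta X^n_i$ is at most $2Dh_n^\rho$ in norm, so the threshold constraint $|\Delta X^n_i|\leq Dh_n^\rho$ forces $|\Lambda_{1,0}c_1(\xi_{\sigma-},z)|\leq 3Dh_n^\rho$. Full column rank of $\Lambda_{1,0}$ then gives $|c_1(\xi_{\sigma-},z)|\leq C_0h_n^\rho$, and \textbf{[A4]} upgrades this to $|z|\leq C_1h_n^\rho$ for all sufficiently large $n$ (so that $C_1h_n^\rho$ lies in the near-origin neighbourhood supplied by \textbf{[A4]}). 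Using the Poisson random measure identity ${\bf{P}}_{{\bf{\Sigma}}_0}(J^n_{i,1}=1,\,\text{height}\in A\mid\mathcal{F}^n_{i-1})=h_ne^{-\lambda_{1,0}h_n}\!\int_{A}f_1(z)\,dz$ together with independence of $p_1$ and $(p_2,p_3,p_4)$, the analysis reduces to
\begin{align*}
    {\bf{P}}_{{\bf{\Sigma}}_0}\bigl(C^n_{i,1,0,0,0}\bigm|\mathcal{F}^n_{i-1}\bigr)\leq h_n\int_{|z|\leq C_1h_n^\rho}\!f_1(z)\,dz+R_{i-1}(h_n^p,\xi,\delta,\varepsilon,\zeta),
\end{align*}
and \textbf{[A3]} together with polar coordinates yields $\int_{|z|\leq C_1h_n^\rho}|z|^{1-d_1}\,dz=O(h_n^\rho)$, delivering the required $R_{i-1}(h_n^{1+\rho},\xi,\delta,\varepsilon,\zeta)$ bound.
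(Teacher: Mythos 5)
Your proposal is correct and essentially reproduces the paper's own proof: the same Poisson factorization gives (\ref{C1})--(\ref{CK4}), the same use of Lemma \ref{Pine} together with the elementary bound $1-e^{-x}\leq x$ gives (\ref{C0}), and for (\ref{CK1}) you combine Lemma \ref{Pine}, the minimum singular value of ${\bf{\Lambda}}_{1,0}$ (the paper's Lemma \ref{Mlemma}), \textbf{[A4]} and the \textbf{[A3]} small-ball estimate to extract the extra factor $h_n^{\rho}$ from the one-jump probability, exactly as the paper does. The only differences are bookkeeping in the $K_1$ case (you first discard the large continuous-fluctuation event, whereas the paper splits on whether $|\Delta Z_{1,\tau_i^n}|$ exceeds $2Dh_n^{\rho}/(\sigma_0 c_{1,0})$), and your use of \textbf{[A4]} only near the origin carries the same implicit restriction that the paper itself glosses over with ``when $|\Delta \xi_{\tau_i^n}|$ is small enough.''
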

\begin{lemma}\label{Dlemma}
Under ${\bf{[A1]}}$, ${\bf{[A3]}}$ and ${\bf{[A4]}}$, for any $p\geq 1$ and a sufficiently large $n$,
\begin{align}
    &{\bf{P}}_{{\bf{\Sigma}}_0}\Bigl(D^{n}_{i,0,0,0,0}\big|\mathcal{F}_{i-1}^n\Bigr)=R_{i-1}(h_n^p,\xi,\delta,\varepsilon,\zeta),\label{D0}\\
    &{\bf{P}}_{{\bf{\Sigma}}_0}\Bigl(D^{n}_{i,1,1,1,1}\big|\mathcal{F}_{i-1}^n\Bigr)\leq\lambda_{1,0}\lambda_{2,0}\lambda_{3,0}\lambda_{4,0} h_n^4,\label{D1}\\
    &{\bf{P}}_{{\bf{\Sigma}}_0}\Bigl(D^{n}_{i,1,0,0,0}\big|\mathcal{F}_{i-1}^n\Bigr)=\lambda_{1,0}h_n\tilde{R}_{i-1}(h_n^{\rho},\xi,\delta,\varepsilon,\zeta),\label{DK11}\\
    &{\bf{P}}_{{\bf{\Sigma}}_0}\Bigl(D^{n}_{i,0,1,0,0}\big|\mathcal{F}_{i-1}^n\Bigr)=\lambda_{2,0}h_n\tilde{R}_{i-1}(h_n^{\rho},\xi,\delta,\varepsilon,\zeta),\label{DK12}\\
    &{\bf{P}}_{{\bf{\Sigma}}_0}\Bigl(D^{n}_{i,0,0,1,0}\big|\mathcal{F}_{i-1}^n\Bigr)=\lambda_{3,0}h_n\tilde{R}_{i-1}(h_n^{\rho},\xi,\delta,\varepsilon,\zeta),\\
    &{\bf{P}}_{{\bf{\Sigma}}_0}\Bigl(D^{n}_{i,0,0,0,1}\big|\mathcal{F}_{i-1}^n\Bigr)=\lambda_{4,0}h_n\tilde{R}_{i-1}(h_n^{\rho},\xi,\delta,\varepsilon,\zeta),\label{DK14}\\
    &{\bf{P}}_{{\bf{\Sigma}}_0}\Bigl(D^{n}_{i,k_1,k_2,k_3,k_4}\big|\mathcal{F}_{i-1}^n\Bigr)\leq \Bigl(\max_{j=1,2,3,4}\lambda_{j,0}\Bigr)^2 h_n^2\quad (k_1,k_2,k_3,k_4)\in K_2,\label{DK2}\\
    & {\bf{P}}_{{\bf{\Sigma}}_0}\Bigl(D^{n}_{i,k_1,k_2,k_3,k_4}\big|\mathcal{F}_{i-1}^n\Bigr)\leq 
    \Bigl(\max_{j=1,2,3,4}\lambda_{j,0}\Bigr)^3 h_n^3\quad (k_1,k_2,k_3,k_4)\in K_3  \label{DK3}
\end{align}
and
\begin{align}
    &{\bf{P}}_{{\bf{\Sigma}}_0}\Bigl(D^{n}_{i,k_1,k_2,k_3,k_4}\big|\mathcal{F}_{i-1}^n\Bigr)\leq \Bigl(\max_{j=1,2,3,4}\lambda_{j,0}^2\Bigr)h_n^2\quad (k_1,k_2,k_3,k_4)\in K_4 \label{DK4}
\end{align}
for $i=1,\ldots,n$.
\end{lemma}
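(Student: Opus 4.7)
The plan is to combine Lemma \ref{Pine} and Lemma \ref{Clemma} with direct computations of Poisson probabilities for the jump counts $J_{i,j}^n$, using that the four counts $J_{i,j}^n$ ($j=1,2,3,4$) are mutually independent and independent of $\mathcal{F}_{i-1}^n$, with ${\bf{P}}_{{\bf{\Sigma}}_0}\bigl(J_{i,j}^n=k\bigl|\mathcal{F}_{i-1}^n\bigr)=e^{-\lambda_{j,0}h_n}(\lambda_{j,0}h_n)^k/k!$. For (\ref{D0}), on $\bigcap_j\{J_{i,j}^n=0\}$ no jump of $\xi,\delta,\varepsilon,\zeta$ occurs in $(t_{i-1}^n,t_i^n]$, so $X$ is continuous there and $\tau_i^n=t_i^n$ by the convention $\sup\phi=-\infty$. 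In particular $|\Delta_i^n X|\leq\sup_{t\in[t_{i-1}^n,\tau_i^n)}|X_t-X_{t_{i-1}^n}|$, and (\ref{supine1}) gives the claim immediately. The bounds (\ref{D1}), (\ref{DK2})-(\ref{DK4}) do not require the threshold at all: since each $D^n_{i,k_1,k_2,k_3,k_4}$ is contained in the corresponding jump-count event, independence together with the elementary Poisson tails ${\bf{P}}(J_{i,j}^n=1|\mathcal{F}_{i-1}^n)\leq\lambda_{j,0}h_n$ and ${\bf{P}}(J_{i,j}^n\geq 2|\mathcal{F}_{i-1}^n)\leq(\lambda_{j,0}h_n)^2$ yield the required products in each of the classes $K_2$, $K_3$, $K_4$ and for the configuration $(1,1,1,1)$.

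The core identities (\ref{DK11})-(\ref{DK14}) are handled by disjointly decomposing the jump-configuration event. For example, $\{J_{i,1}^n=1,\ J_{i,j}^n=0\ (j\neq 1)\}=C^n_{i,1,0,0,0}\sqcup D^n_{i,1,0,0,0}$, so that
\begin{align*}
{\bf{P}}_{{\bf{\Sigma}}_0}\bigl(D^n_{i,1,0,0,0}\bigl|\mathcal{F}_{i-1}^n\bigr)=\lambda_{1,0}h_n\exp\!\Bigl(-\sum_{j=1}^4\lambda_{j,0}h_n\Bigr)-{\bf{P}}_{{\bf{\Sigma}}_0}\bigl(C^n_{i,1,0,0,0}\bigl|\mathcal{F}_{i-1}^n\bigr).
\end{align*}
A Taylor expansion of the exponential produces $\lambda_{1,0}h_n-R(h_n^2,\cdot)$, while (\ref{CK1}) supplies ${\bf{P}}_{{\bf{\Sigma}}_0}(C^n_{i,1,0,0,0}|\mathcal{F}_{i-1}^n)=R_{i-1}(h_n^{\rho+1},\xi,\delta,\varepsilon,\zeta)$, since $(1,0,0,0)\in K_1$. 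Both corrections are absorbed into a single $R_{i-1}(h_n^{\rho+1},\cdot)$ term, and rewriting $\lambda_{1,0}h_n-R_{i-1}(h_n^{\rho+1},\cdot)=\lambda_{1,0}h_n\tilde R_{i-1}(h_n^\rho,\cdot)$ via the identity $\lambda_{1,0}h_n\cdot R_{i-1}(h_n^\rho,\cdot)=R_{i-1}(h_n^{\rho+1},\cdot)$ yields the claim. The three remaining identities follow by permuting which Poisson measure fires.

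The delicate step is the error bookkeeping in (\ref{DK11})-(\ref{DK14}): one must verify that both the Taylor remainder of the exponential and the contribution from $C^n_{i,1,0,0,0}$ through (\ref{CK1}) fit inside a common $R_{i-1}(h_n^{\rho+1},\xi,\delta,\varepsilon,\zeta)$. The restriction $\rho\in[1/3,1/2)$ is precisely what ensures $\rho+1<2$, so the $O(h_n^2)$ Poisson correction is indeed dominated by $R_{i-1}(h_n^{\rho+1},\cdot)$. All the substantive work involving jump sizes and assumption {\bf{[A4]}} has been carried out inside Lemma \ref{Clemma}, so the present proof essentially amounts to organising the conditional probabilities of the jump configurations via inclusion-exclusion.
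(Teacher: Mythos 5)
Your proposal is correct and follows essentially the same route as the paper's combined proof of Lemmas \ref{Clemma}--\ref{Dlemma}: (\ref{D0}) via Lemma \ref{Pine} on the no-jump event, (\ref{D1}) and (\ref{DK2})--(\ref{DK4}) by dropping the threshold and using elementary Poisson bounds with independence, and (\ref{DK11})--(\ref{DK14}) by writing the $D$-probability as the exact Poisson probability $e^{-\lambda_0 h_n}\lambda_{j,0}h_n$ minus the corresponding $C$-probability controlled by (\ref{CK1}), then absorbing the corrections into $\lambda_{j,0}h_n\tilde{R}_{i-1}(h_n^{\rho},\xi,\delta,\varepsilon,\zeta)$, exactly as the paper does. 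One cosmetic quibble: absorbing the $O(h_n^{2})$ exponential correction into $R_{i-1}(h_n^{\rho+1},\xi,\delta,\varepsilon,\zeta)$ only needs $\rho<1$, so the restriction $\rho\in[1/3,1/2)$ is not what makes this step work; the lower bound $\rho\geq 1/3$ is used elsewhere (e.g.\ in Propositions \ref{X1momentprop} and \ref{Econvprop1}).
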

By Lemmas \ref{Clemma} and \ref{Dlemma}, we can judge that no jumps occur in the interval $[t_{i-1}^n,t_i^n)$ if $|\Delta X_{i}^n\bigr|\leq Dh_n^{\rho}$, and a single jump occurs in the interval $[t_{i-1}^n,t_i^n)$ if $|\Delta X_{i}^n\bigr|> Dh_n^{\rho}$. Thus, to estimate ${\bf{\Sigma}}_0$, we use the estimator
\begin{align*}
    \hat{\bf{\Sigma}}_n=\frac{1}{\tilde{N}_nh_n}\sum_{i=1}^n(\Delta_i^n X)(\Delta_i^n X)^{\top}{\bf{1}}_{\{|\Delta_i^n X|\leq Dh_n^{\rho}\}},
\end{align*}
where 
\begin{align*}
    N_n=\sum_{i=1}^n{\bf{1}}_{\{|\Delta_{i}^n X|\leq Dh_n^{\rho}\}},\quad \tilde{N}_n=\left\{
    \begin{array}{ll}
    N_n & (N_n\neq 0),\\
    n & (N_n=0).
    \end{array}\right.
\end{align*}
The asymptotic result of $\hat{\bf{\Sigma}}_n$ is as follows.
\begin{theorem}\label{Qtheorem}
Under {\rm{\textbf{[A1]}}}-{\rm{\textbf{[A4]}}}, as $n\longrightarrow\infty$,
\begin{align}
    \hat{\bf{\Sigma}}_n\overset{p}{\longrightarrow}{\bf{\Sigma}}_0 \label{Qcons}
\end{align}
and
\begin{align}
    \sqrt{n}(\vech \hat{\bf{\Sigma}}_n-\vech{\bf{\Sigma}}_0)\overset{d}{\longrightarrow}N_{\bar{p}}\Bigl(0,2\mathbb{D}_p^{+}({\bf{\Sigma}}_0\otimes {\bf{\Sigma}}_0)\mathbb{D}_p^{+\top}\Bigr) \label{Qasym}
\end{align}
under ${\bf{P}}_{{\bf{\Sigma}}_0}$.
\end{theorem}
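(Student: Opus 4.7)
The plan is to compare $\hat{\bf{\Sigma}}_n$ with the oracle realized covariance of the continuous part of $X$, and then invoke a martingale central limit theorem for the realized quadratic variation of a continuous diffusion. Let $X_t^c$ denote the continuous part of $X_t$ in the L{\'e}vy--It{\^o} decomposition; by the mutual independence of $W_1,\ldots,W_4$ and the constancy of the diffusion coefficients, $X^c$ is an It{\^o} process whose instantaneous quadratic covariation matrix is the constant ${\bf{\Sigma}}_0$. The oracle estimator
\begin{align*}
    \bar{\bf{\Sigma}}_n = \frac{1}{T}\sum_{i=1}^n (\Delta_i^n X^c)(\Delta_i^n X^c)^{\top}
\end{align*}
then satisfies $\bar{\bf{\Sigma}}_n \stackrel{p}{\longrightarrow} {\bf{\Sigma}}_0$ by the law of large numbers for realized quadratic variation, and, as argued below, $\sqrt{n}\bigl(\vech\bar{\bf{\Sigma}}_n-\vech{\bf{\Sigma}}_0\bigr) \stackrel{d}{\longrightarrow} N_{\bar{p}}\bigl(0, 2\mathbb{D}_p^{+}({\bf{\Sigma}}_0\otimes {\bf{\Sigma}}_0)\mathbb{D}_p^{+\top}\bigr)$. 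Both (\ref{Qcons}) and (\ref{Qasym}) thus reduce, by Slutsky, to showing $\hat{\bf{\Sigma}}_n - \bar{\bf{\Sigma}}_n = o_p(n^{-1/2})$.

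For the comparison, Lemma \ref{Dlemma} (summing (\ref{D0}), (\ref{DK11})--(\ref{DK14}), and (\ref{DK2})--(\ref{DK4})) yields ${\bf{E}}[n-N_n]=O(1)$, whence $T-\tilde{N}_n h_n = O_p(n^{-1})$. Writing $A_n$ and $B_n$ for the numerators of $\tilde{N}_n h_n\,\hat{\bf{\Sigma}}_n$ and $T\,\bar{\bf{\Sigma}}_n$ respectively, I decompose
\begin{align*}
    \hat{\bf{\Sigma}}_n-\bar{\bf{\Sigma}}_n = \frac{A_n-B_n}{\tilde{N}_n h_n} + B_n\Bigl(\frac{1}{\tilde{N}_n h_n}-\frac{1}{T}\Bigr),
\end{align*}
where the second summand is $O_p(n^{-1})$ since $B_n=O_p(1)$. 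The difference $A_n-B_n$ splits, via the Lemmas \ref{Clemma}--\ref{Dlemma} partition, into (a) contributions from $C^n_{i,k_1,\ldots,k_4}$ with $(k_1,\ldots,k_4) \in K_1\cup K_2\cup K_3\cup K_4$ (truncation holds but jumps occur) and (b) contributions from $D^n_{i,\cdot}$ (truncation fails). On $K_1$-events, bounding the bracket $(\Delta_i^n X)(\Delta_i^n X)^\top-(\Delta_i^n X^c)(\Delta_i^n X^c)^\top$ by $|\Delta_i^n X|^2 + |\Delta_i^n X^c|^2$, combining the truncation $|\Delta_i^n X|^2\leq D^2 h_n^{2\rho}$ with the probability estimate (\ref{CK1}), and applying Cauchy--Schwarz to ${\bf{E}}[|\Delta_i^n X^c|^2 {\bf{1}}_{C^n_{i,K_1}}]$ using ${\bf{E}}|\Delta_i^n X^c|^4 = O(h_n^2)$, the $L^1$-norm is bounded by $O(nh_n^{3\rho+1}) + O(nh_n^{(\rho+3)/2})$, which is $o(n^{-1/2})$ under $\rho \geq 1/3$. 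The $K_2,K_3,K_4$ contributions are smaller by (\ref{CK2})--(\ref{CK4}); part (b) is handled by factoring the Brownian part of $\Delta_i^n X^c$ from the Poisson jump events (using the assumed independence) together with (\ref{D0})--(\ref{DK14}), producing an $L^1$-norm of order $O(nh_n^2)=O(h_n)$.

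For the oracle CLT, set $\zeta_i^n = \vech\bigl[(\Delta_i^n X^c)(\Delta_i^n X^c)^\top - h_n{\bf{\Sigma}}_0\bigr]$. The drift contribution to ${\bf{E}}\bigl[(\Delta_i^n X^c)(\Delta_i^n X^c)^\top\mid\mathcal{F}_{i-1}^n\bigr]$ is $O(h_n^2)$, so $\sum_i {\bf{E}}[\zeta_i^n\mid\mathcal{F}_{i-1}^n] = O_p(h_n)$, negligible after rescaling by $\sqrt{n}/T$. Isserlis' formula applied to the Gaussian leading part of $\Delta_i^n X^c$ then yields
\begin{align*}
    \frac{n}{T^2}\sum_{i=1}^n {\bf{E}}\bigl[\zeta_i^n(\zeta_i^n)^\top\mid\mathcal{F}_{i-1}^n\bigr] \stackrel{p}{\longrightarrow} 2\mathbb{D}_p^{+}({\bf{\Sigma}}_0\otimes {\bf{\Sigma}}_0)\mathbb{D}_p^{+\top},
\end{align*}
identifying the target asymptotic variance, while the conditional Lindeberg condition follows from the uniform $L^4$-boundedness of $h_n^{-1/2}|\Delta_i^n X^c|$ guaranteed by (\ref{moment}). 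A standard martingale CLT for triangular arrays then completes the proof. The main technical obstacle is step (a) above: obtaining a $o_p(n^{-1/2})$ bound on the $K_1$-type residual, which crucially depends on the sharp probability estimate (\ref{CK1}) rather than a crude union bound over jump events.
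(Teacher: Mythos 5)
Your proposal is correct in outline and reaches both (\ref{Qcons}) and (\ref{Qasym}), but it is organized differently from the paper's proof. The paper never introduces a global oracle: it derives conditional moment expansions for the truncated increments themselves (Propositions \ref{X1momentprop}--\ref{Econvprop2}), using Lemmas \ref{Clemma}--\ref{Dlemma} and an interval-wise comparison with a jump-free copy of the diffusion on the no-jump event, and then applies the Hall--Heyde martingale CLT directly to the normalized truncated array $L_i^n$, the random normalization $\tilde{N}_nh_n$ being controlled by Proposition \ref{Nprop}. You instead prove the CLT for the realized covariance $\bar{\bf{\Sigma}}_n$ of the continuous part (classical realized-covariance theory with constant diffusion matrix ${\bf{\Sigma}}_0$) and reduce the theorem to the asymptotic equivalence $\hat{\bf{\Sigma}}_n-\bar{\bf{\Sigma}}_n=o_p(n^{-1/2})$, isolating all jump and threshold effects in a single $L^1$ remainder; the decisive exponent count is the same in both arguments, namely that a ``jump surviving the threshold'' ($K_1$-type) event costs $h_n^{2\rho}\cdot h_n^{\rho+1}$ per interval by (\ref{CK1}), which is negligible at rate $\sqrt{n}$ precisely because $\rho\geq 1/3$ (the paper uses the same bound inside (\ref{EX1X1})--(\ref{EX1X1X1X1})). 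Your route buys modularity, since the oracle CLT is standard and the bias analysis is a clean Markov-inequality bound, while the paper's route produces conditional moment expansions that are reused later (Propositions \ref{Econvprop1}--\ref{Econvprop2} also feed Theorems \ref{thetatheorem} and \ref{testtheorem1}). Two spots in your sketch need slightly more care when written out: in the Isserlis step, $\Delta_i^nX^c$ is not conditionally Gaussian because the drift is random, so you need the conditional fourth-moment expansion with an $O(h_n^3)$ remainder (exactly what the paper imports from Lemma 21 of Kusano and Uchida \cite{Kusano(2023)}); and in your part (b) only the Brownian-martingale part of $\Delta_i^n X^c$ is independent of the Poisson counts, since the drift part depends on the jump history inside the interval, so that piece should be handled by Cauchy--Schwarz (it is $O(h_n^2)$ in $L^2$), after which your $O(nh_n^2)=O(h_n)$ bound stands.
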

Next, we consider the estimation of parameters
\begin{align}
    {\bf{\Lambda}}_1,\ {\bf{\Lambda}}_2,\ {\bf{\Gamma}},\ {\bf{\Psi}},\ {\bf{\Sigma}}_{\xi\xi},\ 
    {\bf{\Sigma}}_{\delta\delta},\ {\bf{\Sigma}}_{\varepsilon\varepsilon},\ {\bf{\Sigma}}_{\zeta\zeta}. \label{para}
\end{align}
Unfortunately, 
we cannot estimate all elements of (\ref{para}) due to non-identifiable problems. Hence, we impose constraints on the parameters (\ref{para}). For instance, statisticians may set some elements of (\ref{para}) to $0$ or $1$ to ensure identifiability. These constraints are made from the theoretical viewpoint of each research field. See Kusano and Uchida \cite{Kusano(JJSD)} for details of the constraints and the identifiability problem. Note that it is sufficient to estimate only unknown non-duplicated elements of
(\ref{para}). Set the vector of those elements as $\theta\in\Theta\subset\mathbb{R}^q$, where $q$ is the number of only unknown non-duplicated elements of
(\ref{para}), and $\Theta$ is a parameter space. For simplicity, we suppose that $\Theta$ is convex and compact. Hereafter, we write the parameters (\ref{para}) as
\begin{align*}
    {\bf{\Lambda}}_1^{\theta},\ {\bf{\Lambda}}_2^{\theta},\ {\bf{\Gamma}}^{\theta},\ {\bf{\Psi}}^{\theta},\ {\bf{\Sigma}}_{\xi\xi}^{\theta},\ 
    {\bf{\Sigma}}_{\delta\delta}^{\theta},\ {\bf{\Sigma}}_{\varepsilon\varepsilon}^{\theta},\ {\bf{\Sigma}}_{\zeta\zeta}^{\theta}. 
\end{align*}
In addition, we set 
\begin{align*}
    {\bf{\Sigma}}(\theta)=\begin{pmatrix}
    {\bf{\Sigma}}(\theta)^{11} & {\bf{\Sigma}}(\theta)^{12}\\
    {\bf{\Sigma}}(\theta)^{12\top} & {\bf{\Sigma}}(\theta)^{22}
    \end{pmatrix},
\end{align*}
where
\begin{align*}
    {\bf{\Sigma}}(\theta)^{11}&={\bf{\Lambda}}^{\theta}_{1}{\bf{\Sigma}}_{\xi\xi}^{\theta}{\bf{\Lambda}}_{1}^{\theta\top}+{\bf{\Sigma}}_{\delta\delta}^{\theta},\\
    {\bf{\Sigma}}(\theta)^{12}&={\bf{\Lambda}}_{1}^{\theta}{\bf{\Sigma}}_{\xi\xi}^{\theta}{\bf{\Gamma}}^{\theta\top}{\bf{\Psi}}^{\theta-1\top}{\bf{\Lambda}}_{2}^{\theta\top},\\
    {\bf{\Sigma}}(\theta)^{22}&={\bf{\Lambda}}_{2}^{\theta}{\bf{\Psi}}^{\theta-1}({\bf{\Gamma}}^{\theta}{\bf{\Sigma}}_{\xi\xi}^{\theta}{\bf{\Gamma}}^{\theta\top}+{\bf{\Sigma}}_{\zeta\zeta}^{\theta}){\bf{\Psi}}^{\theta-1\top}{\bf{\Lambda}}_{2}^{\theta\top}+{\bf{\Sigma}}_{\varepsilon\varepsilon}^{\theta}.
\end{align*}
Note that ${\bf{\Sigma}}(\theta)$ is two-times continuously differentiable
for $\theta$. To estimate $\theta$, we use the following quasi-likelihood:
\begin{align*}
    {\bf{L}}_n(\theta)=\exp\bigl\{{\bf{H}}_n(\theta)\bigr\},
\end{align*}
where 
\begin{align*}
    {\bf{H}}_n(\theta)
    =-\frac{1}{2}\sum_{i=1}^n\log\det {\bf{\Sigma}}(\theta){\bf{1}}_{\{|\Delta_{i}^n X|\leq h_n^{\rho}\}}-\frac{1}{2h_n}\sum_{i=1}^n(\Delta_{i}^n X)^{\top}{\bf{\Sigma}}(\theta)^{-1}(\Delta_{i}^n X){\bf{1}}_{\{|\Delta X_{i}^n|\leq h_n^{\rho}\}}.
\end{align*}
Define the quasi-maximum likelihood estimator as
\begin{align*}
    {\bf{H}}_n(\hat{\theta}_n)=\sup_{\theta\in\Theta}{\bf{H}}_n(\theta).
\end{align*}
Set the true value of $\theta$ as $\theta_0\in{{\rm{Int}}(\Theta)}$. Let
\begin{align*}
    \left.\Delta_0=\frac{\partial}{\partial\theta^{\top}}\vech{{\bf{\Sigma}}(\theta)}\right|_{\theta=\theta_0},\quad {\bf{W}}_0=2\mathbb{D}_p^{+}\bigl({\bf{\Sigma}}(\theta_0)\otimes {\bf{\Sigma}}(\theta_0)\bigr)\mathbb{D}_p^{+\top}
\end{align*}
and ${\bf{P}}_{\theta_0} ={\bf{P}}_{{\bf{\Sigma}}(\theta_0)}$. Furthermore, we make the following assumption.
\begin{enumerate}
    \item[\bf{[B1]}]
    \begin{enumerate}
        \item 
         ${\bf{\Sigma}}(\theta)={\bf{\Sigma}}(\theta_0)\Longrightarrow \theta=\theta_0$.
        \item $\Delta_0$ has full column rank.
    \end{enumerate}
\end{enumerate}
For the quasi-maximum likelihood estimator, the following theorem holds.
\begin{theorem}\label{thetatheorem}
Under {\rm{\textbf{[A1]}}}-{\rm{\textbf{[A4]}}} and {\rm{\textbf{[B1]}}}, as $n\longrightarrow\infty$,
\begin{align}
    \hat{\theta}_n\overset{p}{\longrightarrow}\theta_0 \label{thetacons}
\end{align}
and
\begin{align}
    \sqrt{n}\bigl(\hat{\theta}_n-\theta_0\bigr)\overset{d}{\longrightarrow}N_q\Bigl(0, \bigl(\Delta_0^{\top}{\bf{W}}_0^{-1}\Delta_0\bigr)^{-1} \Bigr) \label{thetaasym}
\end{align}
under ${\bf{P}}_{\theta_0}$.
\end{theorem}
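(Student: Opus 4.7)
The plan is to follow the classical M-estimator approach for Gaussian quasi-likelihoods, reducing everything to the asymptotic behavior of the threshold-based covariance estimator $\hat{\bf{\Sigma}}_n$ established in Theorem \ref{Qtheorem}. First I would rewrite ${\bf{H}}_n(\theta)$ in a form that exposes $\hat{\bf{\Sigma}}_n$: writing $S_n=\sum_{i=1}^n (\Delta_{i}^n X)(\Delta_{i}^n X)^{\top}{\bf{1}}_{\{|\Delta_{i}^n X|\leq h_n^{\rho}\}}$ and $N_n=\sum_i {\bf{1}}_{\{|\Delta_{i}^n X|\leq h_n^{\rho}\}}$, one has
\begin{align*}
{\bf{H}}_n(\theta)=-\tfrac{N_n}{2}\log\det{\bf{\Sigma}}(\theta)-\tfrac{1}{2}\tr\bigl({\bf{\Sigma}}(\theta)^{-1}h_n^{-1}S_n\bigr),
\end{align*}
where $h_n^{-1}S_n/n$ has the same probability limit as $\hat{\bf{\Sigma}}_n$. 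Because the Poisson counts $p_j((0,T]\times E_j)$ are $O_p(1)$ for fixed $T$, Lemmas \ref{Clemma}--\ref{Dlemma} yield $N_n/n\overset{p}{\longrightarrow}1$. Combining with Theorem \ref{Qtheorem} gives
\begin{align*}
\tfrac{1}{n}{\bf{H}}_n(\theta)\overset{p}{\longrightarrow}\mathbb{U}(\theta):=-\tfrac{1}{2}\log\det{\bf{\Sigma}}(\theta)-\tfrac{1}{2}\tr\bigl({\bf{\Sigma}}(\theta)^{-1}{\bf{\Sigma}}_0\bigr)
\end{align*}
pointwise in $\theta$. Compactness of $\Theta$ and smoothness of ${\bf{\Sigma}}(\theta)$ (via its explicit formula) upgrade this to uniform convergence. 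A standard Kullback--Leibler computation shows that $\mathbb{U}$ is maximized on $\{\theta:{\bf{\Sigma}}(\theta)={\bf{\Sigma}}_0\}$, which by \textbf{[B1]}(a) is the singleton $\{\theta_0\}$. The usual argmax continuous-mapping theorem then yields (\ref{thetacons}).

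For (\ref{thetaasym}) I would Taylor-expand the score around $\theta_0$:
\begin{align*}
0=\partial_\theta{\bf{H}}_n(\hat\theta_n)=\partial_\theta{\bf{H}}_n(\theta_0)+\int_0^1\partial_\theta^2{\bf{H}}_n\bigl(\theta_0+s(\hat\theta_n-\theta_0)\bigr)ds\cdot(\hat\theta_n-\theta_0).
\end{align*}
Using $\partial_{\theta_k}\log\det{\bf{\Sigma}}(\theta)=\tr({\bf{\Sigma}}(\theta)^{-1}\partial_{\theta_k}{\bf{\Sigma}}(\theta))$ and $\partial_{\theta_k}{\bf{\Sigma}}(\theta)^{-1}=-{\bf{\Sigma}}(\theta)^{-1}(\partial_{\theta_k}{\bf{\Sigma}}(\theta)){\bf{\Sigma}}(\theta)^{-1}$, direct computation gives
\begin{align*}
\partial_{\theta_k}{\bf{H}}_n(\theta_0)=\tfrac{N_n}{2}\tr\Bigl({\bf{\Sigma}}_0^{-1}\bigl(\partial_{\theta_k}{\bf{\Sigma}}(\theta_0)\bigr){\bf{\Sigma}}_0^{-1}(\hat{\bf{\Sigma}}_n-{\bf{\Sigma}}_0)\Bigr)+o_p(\sqrt{n}),
\end{align*}
and, using $\vec\partial_\theta{\bf{\Sigma}}(\theta_0)=\mathbb{D}_p\Delta_0$ together with the symmetry of $\hat{\bf{\Sigma}}_n-{\bf{\Sigma}}_0$, this stacks into
\begin{align*}
\tfrac{1}{\sqrt{n}}\partial_\theta{\bf{H}}_n(\theta_0)=\tfrac{N_n}{2n}\Delta_0^\top\mathbb{D}_p^\top\bigl({\bf{\Sigma}}_0^{-1}\otimes{\bf{\Sigma}}_0^{-1}\bigr)\mathbb{D}_p\cdot\sqrt{n}\,\vech(\hat{\bf{\Sigma}}_n-{\bf{\Sigma}}_0)+o_p(1).
\end{align*}
Theorem \ref{Qtheorem} plus Slutsky's lemma then delivers a centered normal limit. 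A parallel computation for the Hessian produces $-\tfrac{1}{n}\partial_\theta^2{\bf{H}}_n(\theta_0)\overset{p}{\longrightarrow}\tfrac{1}{2}\Delta_0^\top\mathbb{D}_p^\top({\bf{\Sigma}}_0^{-1}\otimes{\bf{\Sigma}}_0^{-1})\mathbb{D}_p\Delta_0$; this convergence is uniform in a neighborhood of $\theta_0$, so the integrated Hessian in the Taylor expansion has the same limit, and the limit is positive definite by \textbf{[B1]}(b).

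The principal algebraic obstacle is collapsing the resulting sandwich-form asymptotic variance back into the stated $(\Delta_0^\top{\bf{W}}_0^{-1}\Delta_0)^{-1}$. The key ingredient is the duplication-matrix identity $\tfrac{1}{2}\mathbb{D}_p^\top({\bf{\Sigma}}_0^{-1}\otimes{\bf{\Sigma}}_0^{-1})\mathbb{D}_p={\bf{W}}_0^{-1}$, after which the inner factor cancels against one copy of $(\Delta_0^\top{\bf{W}}_0^{-1}\Delta_0)^{-1}$ on each side of the sandwich, leaving exactly $(\Delta_0^\top{\bf{W}}_0^{-1}\Delta_0)^{-1}$. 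Secondary bookkeeping consists in controlling the discrepancies between $N_n/n$, $\tilde{N}_n/n$ and $1$ at the $\sqrt{n}$ scale, and in verifying that increments discarded or misclassified by the threshold contribute only $o_p(\sqrt{n})$ to the score; both rely on the moment bounds implied by \textbf{[A1]}--\textbf{[A3]} together with the conditional probability estimates in Lemmas \ref{Clemma} and \ref{Dlemma}.
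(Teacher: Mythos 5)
Your proposal is correct and follows essentially the same route as the paper: uniform convergence of $n^{-1}{\bf{H}}_n(\theta)$ to $-\frac{1}{2}\log\det{\bf{\Sigma}}(\theta)-\frac{1}{2}\tr\{{\bf{\Sigma}}(\theta)^{-1}{\bf{\Sigma}}(\theta_0)\}$ plus {\bf{[B1]}}(a) for consistency, then a Taylor expansion of the score in which $n^{-1/2}\partial_{\theta}{\bf{H}}_n(\theta_0)$ is written, via the identity $\mathbb{D}_p^{\top}({\bf{\Sigma}}_0^{-1}\otimes{\bf{\Sigma}}_0^{-1})\mathbb{D}_p=2{\bf{W}}_0^{-1}$, as $\Delta_0^{\top}{\bf{W}}_0^{-1}\sqrt{n}\,\vech(\hat{\bf{\Sigma}}_n-{\bf{\Sigma}}_0)+o_p(1)$, with the Hessian converging to $-\Delta_0^{\top}{\bf{W}}_0^{-1}\Delta_0$ and the $\tilde{N}_n/N_n/n$ discrepancies controlled at the $\sqrt{n}$ scale exactly as in the paper's Proposition on $\tilde N_n$. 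The only cosmetic differences are that the paper obtains uniformity directly from the affine dependence of ${\bf{H}}_n$ on $(N_n/n,\ \tilde N_n\hat{\bf{\Sigma}}_n/n)$ and handles the first-order condition on the event $\{\hat\theta_n\in{\rm{Int}}(\Theta)\}$ together with an invertibility event, standard steps your sketch leaves implicit.
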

Next, we consider the goodness-of-fit test:
\begin{align}
    \left\{
    \begin{array}{ll}
    H_0: {\bf{\Sigma}}={\bf{\Sigma}}(\theta),\\
    H_1: {\bf{\Sigma}}\neq{\bf{\Sigma}}(\theta).
    \end{array}
    \right. \label{test}
\end{align}
For ${\bf{\Sigma}}\in\mathcal{M}_p^+$, we set
\begin{align*}
    {\bf{L}}_n({\bf{\Sigma}})=\exp\bigl\{{\bf{H}}_n({\bf{\Sigma}})\bigr\},
\end{align*}
where 
\begin{align*}
    {\bf{H}}_n({\bf{\Sigma}})
    =-\frac{1}{2}\sum_{i=1}^n\log\det {\bf{\Sigma}}{\bf{1}}_{\{|\Delta_{i}^n X|\leq h_n^{\rho}\}}-\frac{1}{2h_n}\sum_{i=1}^n(\Delta_{i}^n X)^{\top}{\bf{\Sigma}}^{-1}(\Delta_{i}^n X){\bf{1}}_{\{|\Delta X_{i}^n|\leq h_n^{\rho}\}}.
\end{align*}
Let $J_n=\bigl\{\hat{\bf{\Sigma}}_n\ {\rm{is\ positive\ definite}}\bigr\}$. 
On $J_n$, we define the quasi-likelihood ratio as 
\begin{align*}
    \lambda_n=\frac{\sup_{\theta\in\Theta}{\bf{L}}_n({\bf{\Sigma}}(\theta))}{\sup_{{{\bf{\Sigma}}\in\mathcal{M}_p^{+}
    }}{\bf{L}}_n({\bf{\Sigma}})}. 
\end{align*}
Note that $\tilde{N}_n=N_n$ on $J_n$. Since 
\begin{align*}
    &\quad\ \frac{1}{2h_n}\sum_{i=1}^n(\Delta_{i}^n X)^{\top}{\bf{\Sigma}}^{-1}(\Delta_{i}^n X){\bf{1}}_{\{|\Delta X_{i}^n|\leq h_n^{\rho}\}}\\
    &=\frac{N_n}{2}\times \frac{1}{N_nh_n}\sum_{i=1}^n\tr\Bigl\{(\Delta_{i}^n X)^{\top}{\bf{\Sigma}}^{-1}(\Delta_{i}^n X){\bf{1}}_{\{|\Delta_{i}^n X|\leq Dh_n^{\rho}\}}\Bigr\}=\frac{N_n}{2}\tr\bigl\{{\bf{\Sigma}}^{-1}\hat{\bf{\Sigma}}_n\bigr\}
\end{align*}
on $J_n$, we have
\begin{align*}
    {\bf{H}}_n({\bf{\Sigma}})
    &=-\frac{N_n}{2}\log\det {\bf{\Sigma}}-\frac{N_n}{2}\tr\bigl\{{\bf{\Sigma}}^{-1}\hat{\bf{\Sigma}}_n\bigr\}
\end{align*}
on $J_n$. Furthermore, ${\bf{H}}_n({\bf{\Sigma}})$ has a maximum value 
\begin{align*}
     {\bf{H}}_n(\hat{\bf{\Sigma}}_n)
    &=-\frac{N_n}{2}\log\det \hat{\bf{\Sigma}}_n-\frac{N_np}{2}
\end{align*}
at ${\bf{\Sigma}}=\hat{{\bf{\Sigma}}}_n$ on $J_n$, so that
\begin{align*}
    -2\log\lambda_n&=-2\sup_{\theta\in\Theta}{\bf{H}}_{n}({\bf{\Sigma}}(\theta))+2
    \sup_{{{\bf{\Sigma}}\in\mathcal{M}}_p^+} {\bf{H}}_{n}({\bf{\Sigma}})\\
    &=-2{\bf{H}}_{n}({\bf{\Sigma}}(\hat{\theta}_n))+2{\bf{H}}_n(\hat{\bf{\Sigma}}_n)\\
    &=N_n\log\det{\bf{\Sigma}}(\hat{\theta}_n)-N_n\log\det\hat{\bf{\Sigma}}_n+N_n \tr\bigl\{{\bf{\Sigma}}(\hat{\theta}_n)^{-1}\hat{\bf{\Sigma}}_n\bigr\}-N_np
\end{align*}
on $J_n$. Therefore, we define the quasi-likelihood ratio test statistics as follows:
\begin{align*}
    {\bf{T}}_n&={\bf{T}}_n(\hat{\theta}_n)\\
    &=N_n\log\det {\bf{\Sigma}}(\hat{{\bf{\theta}}}_n)-N_n\log\det \tilde{\bf{\Sigma}}_n+N_n
    \tr\bigl\{{\bf{\Sigma}}(\hat{\theta}_n)^{-1}\tilde{\bf{\Sigma}}_n\bigr\}-N_np,
\end{align*}
where 
\begin{align*}
   \tilde{\bf{\Sigma}}_n=\left\{
    \begin{array}{ll}
    \hat{\bf{\Sigma}}_n & (on\ J_n),\\
    \mathbb{I}_p & (on\ J_n^c).
    \end{array}\right. 
\end{align*}
Note that ${\bf{T}}_n$ is still well-defined on $J_n^c$. See also Kusano and Uchida \cite{Kusano(JJSD)} for the details of the goodness-of-fit test (\ref{test}). Set the true value of ${\bf{\Sigma}}$ under $H_0$ as ${\bf{\Sigma}}(\theta_0)$, where $\theta_0\in{\rm{Int}}(\Theta)$. Under $H_0$ (i.e., under ${\bf{P}}_{\theta_0}$), the following asymptotic property is shown.
\begin{theorem}\label{testtheorem1}
Under {\rm{\textbf{[A1]}}}-{\rm{\textbf{[A4]}}} and {\rm{\textbf{[B1]}}}, as $n\longrightarrow\infty$,
\begin{align*}
    {\bf{T}}_n\overset{d}{\longrightarrow}\chi^2_{\bar{p}-q}
\end{align*}
under $H_0$.
\end{theorem}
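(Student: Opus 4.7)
The plan is to reduce ${\bf{T}}_n$ via a Taylor expansion of the discrepancy function $F(A,B):=\log\det A-\log\det B+\tr(A^{-1}B)-p$ to a quadratic form in $\sqrt n(\hat\sigma_n-\sigma_0)$, with $\sigma(\theta):=\vech{\bf{\Sigma}}(\theta)$ and $\hat\sigma_n:=\vech\hat{\bf{\Sigma}}_n$, then to use the QMLE first-order condition to extract a projection structure, and finally to identify the limit as $\chi^2_{\bar p-q}$ via the standard idempotent normal quadratic form argument. First I would restrict to $J_n$: by Theorem~\ref{Qtheorem}, $\hat{\bf{\Sigma}}_n\stackrel{p}{\longrightarrow}{\bf{\Sigma}}(\theta_0)\in\mathcal{M}_p^{+}$, so ${\bf{P}}_{\theta_0}(J_n)\to 1$, and on $J_n$ we have $\tilde{\bf{\Sigma}}_n=\hat{\bf{\Sigma}}_n$ and ${\bf{T}}_n=N_n F({\bf{\Sigma}}(\hat\theta_n),\hat{\bf{\Sigma}}_n)$. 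Moreover, Lemma~\ref{Dlemma} yields ${\bf{P}}_{\theta_0}(|\Delta_i^n X|>Dh_n^{\rho}\mid\mathcal{F}_{i-1}^n)=O(h_n)$ uniformly in $i$, whence ${\bf{E}}[n-N_n]=O(nh_n)=O(T)$ and $N_n/n\stackrel{p}{\longrightarrow}1$.

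Using the eigenvalue representation $F(A,B)=\sum_j(\mu_j-1-\log\mu_j)$ with $\mu_j$ the eigenvalues of $A^{-1}B$ (all concentrating near $1$ by consistency) and the local expansion $\mu-1-\log\mu=\tfrac12(\mu-1)^2+O((\mu-1)^3)$, together with $\vec(E)=\mathbb{D}_p\vech(E)$ for symmetric $E$, one obtains
\[
    {\bf{T}}_n=\tfrac{N_n}{2}\bigl(\hat\sigma_n-\sigma(\hat\theta_n)\bigr)^{\top}V_n\bigl(\hat\sigma_n-\sigma(\hat\theta_n)\bigr)+o_p(1),
\]
where $V_n:=\mathbb{D}_p^{\top}\bigl({\bf{\Sigma}}(\hat\theta_n)^{-1}\otimes{\bf{\Sigma}}(\hat\theta_n)^{-1}\bigr)\mathbb{D}_p\stackrel{p}{\longrightarrow}V_0$; the cubic Taylor remainder is $O_p(n^{-3/2})$ by Theorems~\ref{Qtheorem}--\ref{thetatheorem}, so multiplying by $N_n=O_p(n)$ leaves $o_p(1)$.

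Next, the QMLE first-order condition $\partial_\theta {\bf{H}}_n({\bf{\Sigma}}(\theta))|_{\hat\theta_n}=0$ on $J_n$ reduces, via standard matrix calculus, to $\Delta(\hat\theta_n)^{\top}V_n(\hat\sigma_n-\sigma(\hat\theta_n))=0$ with $\Delta(\theta):=\partial\sigma(\theta)/\partial\theta^{\top}$. Combining this with $\sigma(\hat\theta_n)-\sigma_0=\Delta_0(\hat\theta_n-\theta_0)+o_p(n^{-1/2})$ and Theorem~\ref{thetatheorem} gives
\[
    \sqrt n\bigl(\hat\sigma_n-\sigma(\hat\theta_n)\bigr)=(\mathbb{I}_{\bar p}-P_0)Y_n+o_p(1),\quad P_0:=\Delta_0(\Delta_0^{\top}V_0\Delta_0)^{-1}\Delta_0^{\top}V_0,
\]
where $Y_n:=\sqrt n(\hat\sigma_n-\sigma_0)\stackrel{d}{\longrightarrow}Y\sim N_{\bar p}(0,{\bf{W}}_0)$ by Theorem~\ref{Qtheorem}. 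The Magnus--Neudecker identities $\mathbb{D}_p\mathbb{D}_p^{+}=\tfrac12(\mathbb{I}_{p^2}+K_{p,p})$, $K_{p,p}(A\otimes A)=(A\otimes A)K_{p,p}$, $\mathbb{D}_p^{+}\mathbb{D}_p=\mathbb{I}_{\bar p}$ and $K_{p,p}\mathbb{D}_p=\mathbb{D}_p$ combine to give the key algebraic identity ${\bf{W}}_0V_0=2\mathbb{I}_{\bar p}$. Using $V_0=2{\bf{W}}_0^{-1}$ together with $P_0^{\top}{\bf{W}}_0^{-1}={\bf{W}}_0^{-1}P_0$ and $P_0^2=P_0$, one simplifies $(\mathbb{I}_{\bar p}-P_0)^{\top}{\bf{W}}_0^{-1}(\mathbb{I}_{\bar p}-P_0)={\bf{W}}_0^{-1}(\mathbb{I}_{\bar p}-P_0)$, so
\[
    {\bf{T}}_n\stackrel{d}{\longrightarrow}Y^{\top}{\bf{W}}_0^{-1}(\mathbb{I}_{\bar p}-P_0)Y=Z^{\top}MZ,
\]
with $Y={\bf{W}}_0^{1/2}Z$, $Z\sim N_{\bar p}(0,\mathbb{I}_{\bar p})$, and $M:={\bf{W}}_0^{-1/2}(\mathbb{I}_{\bar p}-P_0){\bf{W}}_0^{1/2}$ symmetric idempotent of rank $\bar p-q$ (using \textbf{[B1]}(b)); the limit is therefore $\chi^2_{\bar p-q}$.

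The main obstacle I anticipate is the careful bookkeeping of the cubic Taylor remainder uniformly in a neighbourhood of $({\bf{\Sigma}}(\theta_0),{\bf{\Sigma}}(\theta_0))$, so that it is $o_p(n^{-1})$ after multiplication by $N_n=O_p(n)$, together with the verification of the identity ${\bf{W}}_0V_0=2\mathbb{I}_{\bar p}$ through the duplication- and commutation-matrix calculus; once these are in hand, the remaining pieces are the standard idempotent normal quadratic form representation.
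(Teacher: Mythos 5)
Your proposal is correct, and it reaches the limit by the same overall architecture as the paper (reduce ${\bf{T}}_n$ to a quadratic form in $\sqrt{n}(\vech\hat{\bf{\Sigma}}_n-\vech{\bf{\Sigma}}(\hat{\theta}_n))$, linearize $\vech{\bf{\Sigma}}(\hat{\theta}_n)$ through the estimating equation, and finish with an idempotent normal quadratic form of rank $\bar{p}-q$), but the two technical devices you use differ from the paper's. For the quadratic reduction, the paper Taylor-expands ${\bf{H}}_n({\bf{\Sigma}})$ in $\sigma=\vech{\bf{\Sigma}}$ around its maximizer $\hat{\bf{\Sigma}}_n$, so the first-order term vanishes exactly and the weight is the integrated Hessian ${\bf{R}}_n$, which is shown (via a uniform law of large numbers on a ball, as in Proposition \ref{supHprop}) to converge to ${\bf{W}}_0^{-1}$; you instead expand the deterministic discrepancy $F(A,B)=\sum_j(\mu_j-1-\log\mu_j)$ through the eigenvalues of $A^{-1}B$, getting the weight $\tfrac12 V_n$ with a cubic remainder controlled by the $\sqrt{n}$-rates of Theorems \ref{Qtheorem}--\ref{thetatheorem} — equivalent in substance, since your identity ${\bf{W}}_0V_0=2\mathbb{I}_{\bar{p}}$ (which is correct and is exactly the duplication-matrix fact the paper also invokes when computing $\partial^2_{\sigma}{\bf{H}}({\bf{\Sigma}}(\theta_0))=-{\bf{W}}_0^{-1}$) makes $\tfrac{1}{2}V_0={\bf{W}}_0^{-1}$. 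For the linearization, the paper goes through Lemmas \ref{thetalemma}--\ref{testlemma} (Taylor expansion of the score $\partial_\theta{\bf{H}}_n$ around $\theta_0$ with the integrated Hessian $\tilde{B}_{2,n}$), whereas you exploit the exact stationarity relation $\Delta(\hat{\theta}_n)^{\top}V_n(\hat{\sigma}_n-\sigma(\hat{\theta}_n))=0$ plus the delta method and the tightness of $\sqrt{n}(\hat{\theta}_n-\theta_0)$; this is slightly leaner because it bypasses the second derivative of ${\bf{H}}_n$ in $\theta$, at the cost of relying on Theorem \ref{thetatheorem} for tightness (which is legitimate here). Two small points to make explicit when writing this up: the stationarity equation holds not on $J_n$ alone but on $J_n\cap\{\hat{\theta}_n\in{\rm{Int}}(\Theta)\}$, an event of probability tending to one (the paper tracks this via $A_{1,n}$, and the final passage from the restricted statement to convergence in distribution of ${\bf{T}}_n$ itself should be phrased through such vanishing-probability events, as the paper does with its $\limsup$ argument over closed sets); and the cubic remainder bound should be stated uniformly, e.g.\ $N_n\sum_j|\mu_j-1|^3\leq Cn\max_j|\mu_j-1|\cdot\tr\{({\bf{\Sigma}}(\hat{\theta}_n)^{-1}\hat{\bf{\Sigma}}_n-\mathbb{I}_p)^2\}=O_p(n^{-1/2})$, which is exactly the bookkeeping you anticipated and does go through.
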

By Theorem \ref{testtheorem1}, we can consider the test of asymptotic significance level $\alpha\in(0,1)$ with the following rejection region:
\begin{align*}
    \bigl\{{\bf{T}}_n>\chi^2_{\bar{p}-q}(\alpha)\bigr\}.
\end{align*}
Next, we study the consistency of the test. Fix ${\bf{\Sigma}}^*$ as the true value of ${\bf{\Sigma}}$ under $H_1$. Note that ${\bf{\Sigma}}^*\neq {\bf{\Sigma}}(\theta)$ for any $\theta\in\Theta$. Set the optimal parameter $\theta^*$ as 
\begin{align*}
    {\bf{U}}(\theta^*)=\inf_{\theta\in\Theta}{\bf{U}}(\theta),
\end{align*}
where
\begin{align*}
    {\bf{U}}(\theta)=\log\det {\bf{\Sigma}}(\theta)-\log\det {\bf{\Sigma}}^*+
    \tr\bigl\{{\bf{\Sigma}}(\theta)^{-1}{\bf{\Sigma}}^*\bigr\}-p.
\end{align*}
In addition, to prove the consistency of the test, the following assumption is made.
\begin{enumerate}
    \item[\bf{[B2]}]
    ${\bf{U}}(\theta)={\bf{U}}(\theta^*)\Longrightarrow \theta=\theta^*$.
\end{enumerate}
Under $H_1$ (i.e., under ${\bf{P}}_{{\bf{\Sigma}}^*})$, the following theorem holds.
\begin{theorem}\label{testtheorem2}
Under {\rm{\textbf{[A1]}}}-{\rm{\textbf{[A4]}}} and {\rm{\textbf{[B2]}}}, as $n\longrightarrow\infty$,
\begin{align*}
    {\bf{P}}\Bigl({\bf{T}}_n>\chi^2_{\bar{p}-q}(\alpha)\Bigr)\longrightarrow 1
\end{align*}
under $H_1$.
\end{theorem}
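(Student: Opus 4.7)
The plan is to show that $\mathbf{T}_n/N_n$ converges in probability to the strictly positive constant $\mathbf{U}(\theta^*)$; combined with $N_n \to \infty$, this forces $\mathbf{T}_n \stackrel{p}{\longrightarrow} \infty$ and hence $\mathbf{P}\bigl(\mathbf{T}_n > \chi^2_{\bar{p}-q}(\alpha)\bigr) \to 1$. Although Theorem \ref{Qtheorem} is stated under $\mathbf{P}_{\mathbf{\Sigma}_0}$, its proof relies only on the threshold identification via Lemmas \ref{Pine}--\ref{Dlemma}, which use \textbf{[A1]}--\textbf{[A4]} for the driving jump-diffusion dynamics and not on the factor-model structure of the covariance. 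The same argument therefore gives $\hat{\mathbf{\Sigma}}_n \stackrel{p}{\longrightarrow} \mathbf{\Sigma}^*$ under $\mathbf{P}_{\mathbf{\Sigma}^*}$. Since $\mathbf{\Sigma}^* \in \mathcal{M}_p^+$, we obtain $\mathbf{P}_{\mathbf{\Sigma}^*}(J_n) \to 1$, so that $\tilde{\mathbf{\Sigma}}_n = \hat{\mathbf{\Sigma}}_n$ with probability tending to one; the same estimates also yield $N_n/n \stackrel{p}{\longrightarrow} 1$, hence $N_n \to \infty$ in probability.

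Next, I would prove $\hat{\theta}_n \stackrel{p}{\longrightarrow} \theta^*$ under $\mathbf{P}_{\mathbf{\Sigma}^*}$ by an M-estimator argument. On $J_n$ one has
\begin{align*}
    -\frac{2}{N_n}\mathbf{H}_n(\mathbf{\Sigma}(\theta)) = \log\det\mathbf{\Sigma}(\theta) + \tr\bigl\{\mathbf{\Sigma}(\theta)^{-1}\hat{\mathbf{\Sigma}}_n\bigr\}.
\end{align*}
The consistency of $\hat{\mathbf{\Sigma}}_n$, the continuity of $\theta \mapsto \mathbf{\Sigma}(\theta)^{-1}$, and the compactness of $\Theta$ yield uniform convergence in $\theta$ to
\begin{align*}
    \log\det\mathbf{\Sigma}(\theta) + \tr\bigl\{\mathbf{\Sigma}(\theta)^{-1}\mathbf{\Sigma}^*\bigr\} = \mathbf{U}(\theta) + \log\det\mathbf{\Sigma}^* + p,
\end{align*}
whose minimizer on $\Theta$ is $\theta^*$ by the definition of $\theta^*$ and unique by \textbf{[B2]}. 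The standard argmin-continuity argument then delivers $\hat{\theta}_n \stackrel{p}{\longrightarrow} \theta^*$.

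Finally, on $J_n$,
\begin{align*}
    \frac{\mathbf{T}_n}{N_n} = \log\det\mathbf{\Sigma}(\hat{\theta}_n) - \log\det\hat{\mathbf{\Sigma}}_n + \tr\bigl\{\mathbf{\Sigma}(\hat{\theta}_n)^{-1}\hat{\mathbf{\Sigma}}_n\bigr\} - p,
\end{align*}
which by the continuous mapping theorem converges in probability to $\mathbf{U}(\theta^*)$. The crucial point is $\mathbf{U}(\theta^*) > 0$: the map $\theta \mapsto \mathbf{U}(\theta)$ is twice the Kullback--Leibler divergence between centered Gaussian laws with covariances $\mathbf{\Sigma}^*$ and $\mathbf{\Sigma}(\theta)$, hence nonnegative, with equality only when $\mathbf{\Sigma}(\theta) = \mathbf{\Sigma}^*$, which is excluded under $H_1$. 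Combining $\mathbf{T}_n/N_n \stackrel{p}{\longrightarrow} \mathbf{U}(\theta^*) > 0$ with $N_n \to \infty$ in probability gives $\mathbf{T}_n \stackrel{p}{\longrightarrow} \infty$, proving the claim. The main obstacle is the consistency of $\hat{\theta}_n$ toward $\theta^*$ under misspecification: Theorem \ref{thetatheorem} applies only under $H_0$, so one cannot simply invoke it, and the identifiability route via \textbf{[B2]} combined with the Gaussian KL structure of $\mathbf{U}$ is what makes the argument go through.
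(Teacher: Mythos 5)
Your proposal is correct and follows essentially the same route as the paper: uniform convergence of the normalized statistic to ${\bf{U}}(\theta)$ (the paper's Proposition \ref{supH1prop}), consistency $\hat{\theta}_n\stackrel{p}{\longrightarrow}\theta^*$ under {\bf{[B2]}} (Proposition \ref{H1prop}), and then ${\bf{T}}_n/n\stackrel{p}{\longrightarrow}{\bf{U}}(\theta^*)>0$ forces the rejection probability to one. The only cosmetic differences are that you normalize by $N_n$ instead of $n$ (immaterial since $N_n/n\stackrel{p}{\longrightarrow}1$) and that you make explicit, via the Gaussian Kullback--Leibler interpretation, the positivity ${\bf{U}}(\theta^*)>0$ that the paper asserts without comment.
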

\section{Numerical Simulations}\label{Simulation}
\subsection{True model}
The four-dimensional observable process $\{X^0_{1,t}\}_{t\geq 0}$ is defined by the true factor model as follows:
\begin{align*}
    X_{1,t}^0&={\bf{\Lambda}}_{1,0}\xi_t^0+\delta_t^0,
\end{align*}
where $\{\xi_{t}^0\}_{t\geq 0}$ and $\{\delta^0_t\}_{t\geq 0}$ are one and four-dimensional latent processes, respectively, 
and
\begin{align*}
    {\bf{\Lambda}}_{1,0}=\begin{pmatrix}
    1 & 0.7 & 1.3 & 0.9
    \end{pmatrix}^{\top}.
\end{align*}
The stochastic process $\{\xi_t^0\}_{t\geq 0}$ satisfies the following one-dimensional L\'{e}vy-OU process:
\begin{align*}
    d\xi^{0}_{t}&=-2\bigl(\xi^{0}_{t-}-1)dt+1.2dW_{1,t}+\int_{\mathbb{R}\backslash \{0\}}zp_1(dt,dz), \quad 
    \xi^{0}_{0}=1,
\end{align*}
where $\{W_{1,t}\}_{t\geq 0}$ is a one-dimensional standard Wiener process, and $p_1(dt,dz)$ is a Poisson random measure on $\mathbb{R}^+\times \mathbb{R}\backslash\{0\}$ with the jump density $f_{1}(z)=3g(z|0,5)$. Here, $g(z|\mu,\sigma^2)$ denotes the probability density function of the normal distribution with mean $\mu\in\mathbb{R}$ and variance $\sigma^2>0$, i.e.,
\begin{align*}
    g(z|\mu,\sigma^2)=\frac{1}{\sqrt{2\pi\sigma^2}}\exp{\biggl\{-\frac{(z-\mu)^2}{2\sigma^2}\biggr\}}.
\end{align*}
The stochastic process $\{\delta_t^0\}_{t\geq 0}$ is defined by the following four-dimensional L\'{e}vy-OU process:
\begin{align*}
    d\delta^{0(1)}_{t}&=-0.8\delta^{0(1)}_{t-}dt+1.6 dW^{(1)}_{2,t}
    +\int_{\mathbb{R}\backslash \{0\}}z p_{2,1}(dt,dz), \quad 
    \delta^{0(1)}_{0}=0,\\
    d\delta^{0(2)}_{t}&=-0.5\delta^{0(2)}_{t-}dt+0.7dW^{(2)}_{2,t}
    +\int_{\mathbb{R}\backslash \{0\}}zp_{2,2}(dt,dz), \quad 
    \delta^{0(2)}_{0}=0,\\
    d\delta^{0(3)}_{t}&=-0.9\delta^{0(3)}_{t-}dt+1.2dW^{(3)}_{2,t}
    +\int_{\mathbb{R}\backslash \{0\}}zp_{2,3}(dt,dz), \quad 
    \delta^{0(3)}_{0}=0
\end{align*}
and
\begin{align*}
    d\delta^{0(4)}_{t}&=-0.7\delta^{0(4)}_{t-}dt+0.9dW^{(4)}_{2,t}
    +\int_{\mathbb{R}\backslash \{0\}}zp_{2,4}(dt,dz), \quad 
    \delta^{0(4)}_{0}=0,
\end{align*}
where $\{W_{2,t}\}_{t\geq 0}$ is a four-dimensional standard Wiener process, and $p_{2,i}(dt,dz)$ for $i=1,2,3,4$ are Poisson random measures on $\mathbb{R}^+\times \mathbb{R}\backslash\{0\}$ with the jump densities $f_{2,1}(z)=2g(z|0,3)$, $f_{2,2}(z)=g(z|0,2)$, $f_{2,3}(z)=g(z|0,3)$, and $f_{2,4}(z)=2g(z|0,2)$, respectively. The eight-dimensional observable process $\{X_{2,t}^0\}_{t\geq 0}$ satisfies the following true factor model:
\begin{align*}
    X_{2,t}^0={\bf{\Lambda}}_{2,0}\eta_t^0+\varepsilon_t^0,
\end{align*}
where $\{\eta^0_{t}\}_{t\geq 0}$ and $\{\varepsilon^0_t\}_{t\geq 0}$ are two and eight-dimensional latent processes, respectively,  and 
\begin{align*}
    {\bf{\Lambda}}_{2,0}=\begin{pmatrix}
    1 & 0.8 & 1.4 & 1.2 & 0 & 0 & 0 & 0\\
    0 & 0 & 0 & 0 & 1 & 0.6 & 1.3 & 0.9
    \end{pmatrix}^{\top}.
\end{align*}
The relationship between $\{\xi^0_{t}\}_{t\geq 0}$ and $\{\eta^0_{t}\}_{t\geq 0}$ is expressed as follows:
\begin{align*}
    \eta_t^0={\bf{\Gamma}}_0\xi_t^0+\zeta_t^0,
\end{align*}
where $\{\zeta^0_t\}_{t\geq 0}$ is a two-dimensional latent process and
\begin{align*}
    {\bf{\Gamma}}_0=\begin{pmatrix}
    0.7 & -0.8
    \end{pmatrix}^{\top}.
\end{align*}
The stochastic process $\{\varepsilon^0_t\}_{t\geq 0}$ satisfies the following eight-dimensional L\'{e}vy-OU process:
\begin{align*}
    d\varepsilon^{0(1)}_{t}&=-0.8\varepsilon^{0(1)}_{t-}dt+0.9dW^{(1)}_{3,t}
    +\int_{\mathbb{R}\backslash \{0\}}z p_{3,1}(dt,dz),\quad 
    \varepsilon^{0(1)}_{0}=0,\\
    d\varepsilon^{0(2)}_{t}&=-1.5\varepsilon^{0(2)}_{t-}dt+1.2dW^{(2)}_{3,t}
    +\int_{\mathbb{R}\backslash \{0\}}zp_{3,2}(dt,dz),\quad 
    \varepsilon^{0(2)}_{0}=0,\\
    d\varepsilon^{0(3)}_{t}&=-0.9\varepsilon^{0(3)}_{t-}dt+0.8dW^{(3)}_{3,t}
    +\int_{\mathbb{R}\backslash \{0\}}zp_{3,3}(dt,dz),\quad 
    \varepsilon^{0(3)}_{0}=0,\\
    d\varepsilon^{0(4)}_{t}&=-0.7\varepsilon^{0(4)}_{t-}dt+1.1
    dW^{(4)}_{3,t}+\int_{\mathbb{R}\backslash \{0\}}zp_{3,4}(dt,dz),\quad 
    \varepsilon^{0(4)}_{0}=0,\\
    d\varepsilon^{0(5)}_{t}&=-1.2\varepsilon^{0(5)}_{t-}dt
    +1.5dW^{(5)}_{3,t}+\int_{\mathbb{R}\backslash \{0\}}zp_{3,5}(dt,dz),\quad 
    \varepsilon^{0(5)}_{0}=0,\\
    d\varepsilon^{0(6)}_{t}&=-0.5\varepsilon^{0(6)}_{t-}dt
    +1.3dW^{(6)}_{3,t}+\int_{\mathbb{R}\backslash \{0\}}zp_{3,6}(dt,dz),\quad 
    \varepsilon^{0(6)}_{0}=0,\\
    d\varepsilon^{0(7)}_{t}&=-1.3\varepsilon^{0(7)}_{t-}dt
    +0.7dW^{(7)}_{3,t}+\int_{\mathbb{R}\backslash \{0\}}zp_{3,7}(dt,dz),\quad 
    \varepsilon^{0(7)}_{0}=0
\end{align*}
and
\begin{align*}
    d\varepsilon^{0(8)}_{t}&=-0.6\varepsilon^{0(8)}_{t-}dt
    +1.4dW^{(8)}_{3,t}+\int_{\mathbb{R}\backslash \{0\}}zp_{3,8}(dt,dz),\quad 
    \varepsilon^{0(8)}_{0}=0,
\end{align*}
where $\{W_{3,t}\}_{t\geq 0}$ is an eight-dimensional standard Wiener process, and 
$p_{3,i}(dt,dz)$ for $i=1,\ldots,8$ are Poisson random measures on $\mathbb{R}^+\times \mathbb{R}\backslash\{0\}$ with the jump densities $f_{3,1}(z)=2g(z|0,2)$, $f_{3,2}(z)=g(z|0,3)$, $f_{3,3}(z)=g(z|0,2)$, $f_{3,4}(z)=2g(z|0,3)$, $f_{3,5}(z)=2g(z|0,3)$, $f_{3,6}(z)=g(z|0,3)$, $f_{3,7}(z)=g(z|0,2)$, and $f_{3,8}(z)=2g(z|0,3)$, respectively. The stochastic process $\{\zeta^0_t\}_{t\geq 0}$ is defined by the following two-dimensional L\'{e}vy-OU process:
\begin{align*}
    d\zeta^{0(1)}_{t}&=-0.8\zeta^{0(1)}_{t-}dt+0.9dW^{(1)}_{4,t}
    +\int_{\mathbb{R}\backslash \{0\}}z p_{4,1}(dt,dz), \quad 
    \zeta^{0(1)}_{0}=0
\end{align*}
and
\begin{align*}
    d\zeta^{0(2)}_{t}&=-1.4\zeta^{0(2)}_{t-}dt+1.1dW^{(2)}_{4,t}
    +\int_{\mathbb{R}\backslash \{0\}}zp_{4,2}(dt,dz), \quad 
    \zeta^{0(2)}_{0}=0,
\end{align*}
where $\{W_{4,t}\}_{t\geq 0}$  is a two-dimensional standard Wiener process, and $p_{4,1}(dt,dz)$ and $p_{4,2}(dt,dz)$ are Poisson random measures on $\mathbb{R}^+\times \mathbb{R}\backslash\{0\}$ with the jump densities $f_{4,1}(z)=2g(z|0,2)$ and $f_{4,2}(z)=g(z|0,3)$, respectively. Figure \ref{truefigure} is the path diagram of the true model at time $t$.
\begin{figure}[h]
    \centering
    \includegraphics[width=0.8\columnwidth]{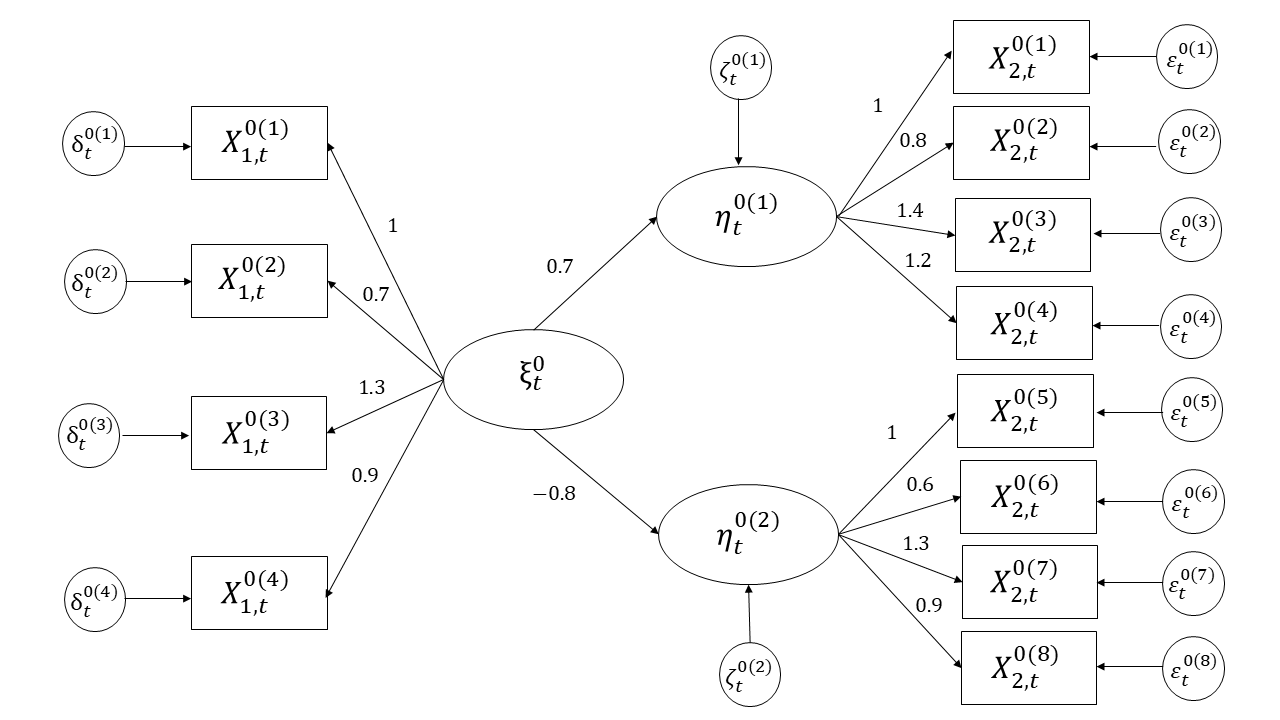}
    \caption{Path diagram of the true model at time $t$.}
    \label{truefigure}
\end{figure}
\subsection{Correctly specified model}
Set $p_1=4$, $p_2=8$, $k_1=1$, $k_2=2$, and $q=26$. We assume the loading matrices as 
\begin{align*}
    {\bf{\Lambda}}_{1}^{\theta}=\begin{pmatrix}
    1 & \theta^{(1)} & \theta^{(2)} & \theta^{(3)}
    \end{pmatrix}^{\top}
\end{align*}
and
\begin{align*}
    {\bf{\Lambda}}_{2}^{\theta}=\begin{pmatrix}
    1 & \theta^{(4)} & \theta^{(5)} & \theta^{(6)} & 0 & 0 & 0 & 0\\
    0 & 0 & 0 & 0 & 1 & \theta^{(7)} & \theta^{(8)} & \theta^{(9)}
    \end{pmatrix}^{\top},\quad {\bf{\Gamma}}^{\theta}=\begin{pmatrix}
    \theta^{(10)}\\
    \theta^{(11)}
    \end{pmatrix},\quad {\bf{\Psi}}^{\theta}=\mathbb{I}_2
\end{align*}
where $\theta^{(i)}$ for $i=1,\ldots,11$ are not zero. In addition, we suppose the volatility matrices as 
\begin{align*}
    {\bf{\Sigma}}_{\xi\xi}^{\theta}=\theta^{(12)},\quad 
    {\bf{\Sigma}}_{\delta\delta}^{\theta}=\Diag\bigl(\theta^{(13)}, \theta^{(14)},\theta^{(15)},\theta^{(16)}\bigr)^{\top}
\end{align*}
and
\begin{align*}
    {\bf{\Sigma}}_{\varepsilon\varepsilon}^{\theta}=\Diag\bigl( \theta^{(17)},\theta^{(18)},\theta^{(19)}, \theta^{(20)}, \theta^{(21)}, \theta^{(22)}, \theta^{(23)}, \theta^{(24)}\bigr)^{\top},\quad {\bf{\Sigma}}_{\zeta\zeta}^{\theta}=\Diag\bigl(\theta^{(25)}, \theta^{(26)}\bigr)^{\top},
\end{align*}
where $\theta^{(i)}$ for $i=12,\ldots,26$ are positive. This model is a correctly specified model since 
\begin{align*}
    {\bf{\Sigma}}_0={\bf{\Sigma}}(\theta_0),
\end{align*}
where 
\begin{align*}
    \theta_0&=\bigl(0.7, 1.3, 0.9, 0.8, 1.4, 1.2, 0.6, 1.3, 0.9, 0.7, -0.8, 1.44, 2.56, 0.49,  \\
    &\qquad\qquad 1.44, 0.81, 0.81, 1.44, 0.64, 1.21, 2.25, 1.69, 0.49, 1.96, 0.81, 1.21\bigr)^{\top}.
\end{align*}
Figure \ref{corfigure} is the path diagram of the correctly specified model at time $t$.
\begin{figure}[h]
    \centering
    \includegraphics[width=0.8\columnwidth]{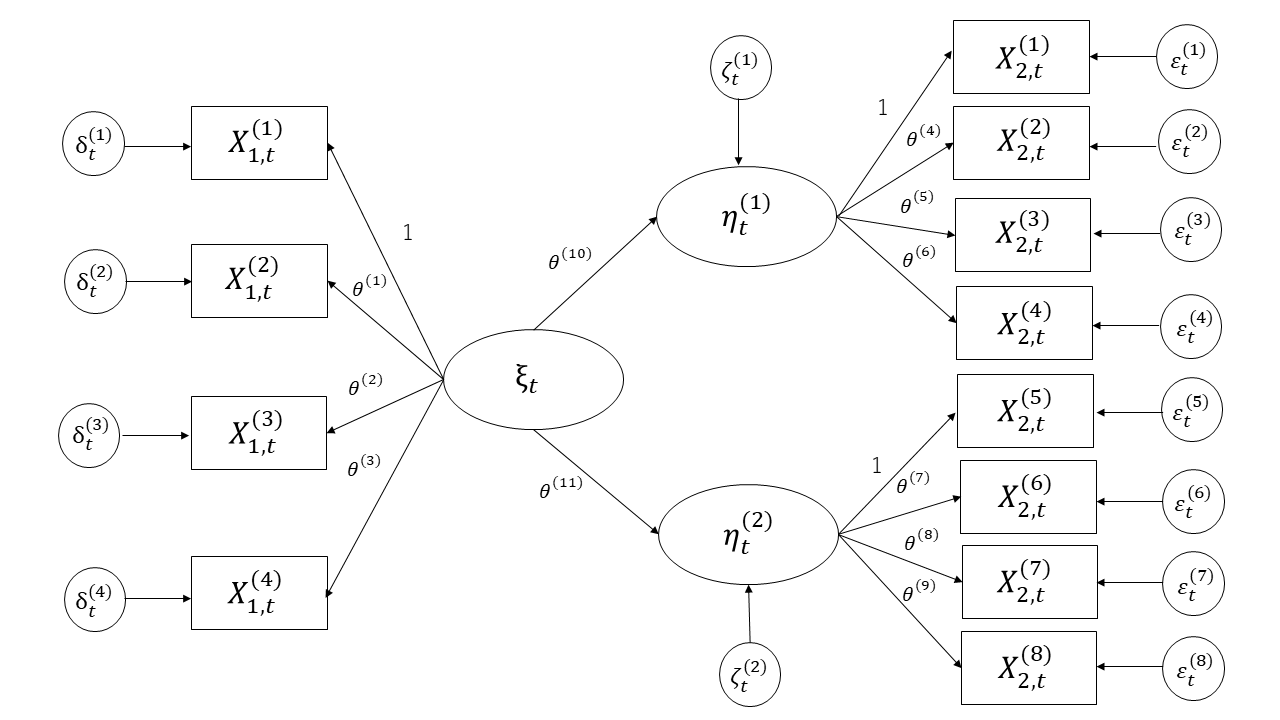}
    \caption{Path diagram of the correctly specified model at time $t$.}
    \label{corfigure}
\end{figure}
\subsection{Misspecified model}
Set $p_1=4$, $p_2=8$, $k_1=1$, $k_2=1$, and $q=25$. In this model, the loading matrices are defined by
\begin{align*}
    {\bf{\Lambda}}_{1}^{\theta}=\begin{pmatrix}
    1 & \theta^{(1)} & \theta^{(2)} & \theta^{(3)}
    \end{pmatrix}^{\top}
\end{align*}
and
\begin{align*}
    {\bf{\Lambda}}_{2}^{\theta}=\begin{pmatrix}
    1 & \theta^{(4)} & \theta^{(5)} & \theta^{(6)} &\theta^{(7)} & \theta^{(8)} & \theta^{(9)} & \theta^{(10)}
    \end{pmatrix}^{\top},\quad {\bf{\Gamma}}^{\theta}=
    \theta^{(11)}
\end{align*}
where $\theta^{(i)}$ for $i=1,\ldots,11$ are not zero. In addition, the volatility matrices are set as follows:
\begin{align*}
    {\bf{\Sigma}}_{\xi\xi}^{\theta}=\theta^{(12)},\quad 
    {\bf{\Sigma}}_{\delta\delta}^{\theta}=\Diag\bigl(\theta^{(13)}, \theta^{(14)},\theta^{(15)},\theta^{(16)} \bigr)^{\top}
\end{align*}
and
\begin{align*}
    {\bf{\Sigma}}_{\varepsilon\varepsilon}^{\theta}=\Diag\bigl( \theta^{(17)},\theta^{(18)},\theta^{(19)}, \theta^{(20)}, \theta^{(21)}, \theta^{(22)}, \theta^{(23)}, \theta^{(24)}\bigr)^{\top},\quad {\bf{\Sigma}}_{\zeta\zeta}^{\theta}=\theta^{(25)},
\end{align*}
where $\theta^{(i)}$ for $i=12,\ldots,25$ are positive. Figure \ref{missfigure} is the path diagram of the misspecified model at time $t$.
\begin{figure}[h]
    \centering
    \includegraphics[width=0.8\columnwidth]{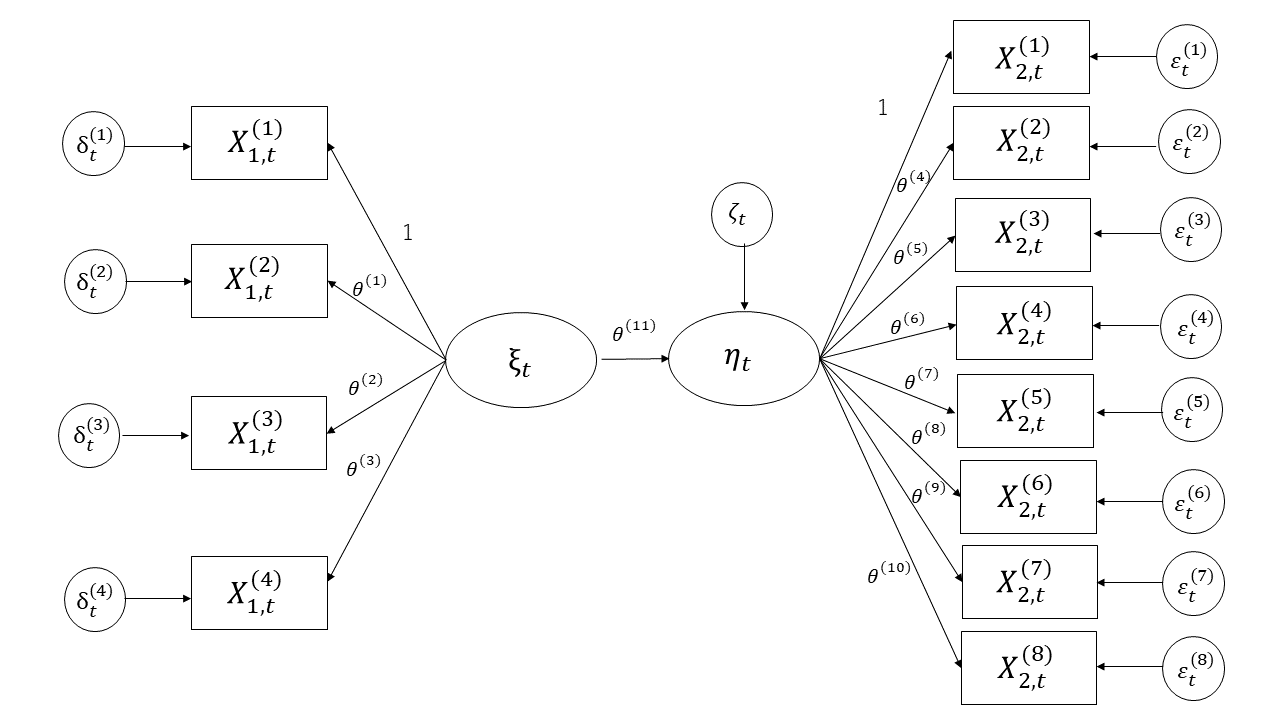}
    \caption{Path diagram of the misspecified model at time $t$.}
    \label{missfigure}
\end{figure}
\subsection{Simulation results}
In this simulation, we set $n=10^5$ and $T=1$, and generated $10000$ independent sample paths from the true model.  To maximize ${\bf{H}}_n(\theta)$, we used optim() with the BFGS method in R language, 
and chose  the true value $\theta_0$ as an initial value.
Let $D=10$ and $\rho=0.4$. 
\subsubsection{Correctly specified model}
Tables \ref{sigmatable} and \ref{thetatable} show the sample means and the sample standard deviations (S.D.s) of $\hat{\bf{\Sigma}}_n$ and $\hat{\theta}_n$, and Figures \ref{sigmafigure} and \ref{thetafigure} show the histograms, the Q-Q plots, and the empirical distributions of $\sqrt{n}((\hat{\bf{\Sigma}}_n)_{11}-({\bf{\Sigma}}_0)_{11})$ and $\sqrt{n}(\hat{\theta}_n^{(1)}-{\theta}_0^{(1)})$, which imply that $\hat{\bf{\Sigma}}_n$ and $\hat{\theta}_n$ have consistency and asymptotic normality. Figure \ref{testfigure} shows the histogram, the Q-Q plot, and the empirical distribution of ${\bf{T}}_n$. 
It seems from Figure \ref{testfigure} that the asymptotic distribution of ${\bf{T}}_n$ is the chi-squared distribution with degrees of freedom $52$.
Hence, we can see that Theorems \ref{Qtheorem}-\ref{testtheorem1} hold true for this example. 
\subsubsection{Misspecified model}
Table \ref{TableH1} shows the number of rejections of the quasi-likelihood ratio test in the misspecified model. Table \ref{TableH1} implies that the test has consistency.
\begin{longtable}{ccccccc}
    \\
    & $(\hat{{\bf{\Sigma}}}_{n})_{(1,1)}$ & $(\hat{{\bf{\Sigma}}}_{n})_{(1,2)}$ &
    $(\hat{{\bf{\Sigma}}}_{n})_{(1,3)}$ &
    $(\hat{{\bf{\Sigma}}}_{n})_{(1,4)}$ & 
    $(\hat{{\bf{\Sigma}}}_{n})_{(1,5)}$ &
    $(\hat{{\bf{\Sigma}}}_{n})_{(1,6)}$ 
    \\ \hline
    True value & 4.000 & 1.008 & 1.872 & 1.296 & 1.008 & 0.806
    \\
    Mean & 4.001 & 1.008 & 1.873 & 1.297 & 1.009 & 0.807\\
    S.D. & 0.018 & 0.008 & 0.014 & 0.010 & 0.010 & 0.010
    \\
    $(\hat{{\bf{\Sigma}}}_{n})_{(1,7)}$
    & $(\hat{{\bf{\Sigma}}}_{n})_{(1,8)}$ & $(\hat{{\bf{\Sigma}}}_{n})_{(1,9)}$  & $(\hat{{\bf{\Sigma}}}_{n})_{(1,10)}$ & $(\hat{{\bf{\Sigma}}}_{n})_{(1,11)}$ & 
    $(\hat{{\bf{\Sigma}}}_{n})_{(1,12)}$ &
    $(\hat{{\bf{\Sigma}}}_{n})_{(2,2)}$ \\ \hline
    1.411 & 1.210 & -1.152 & -0.691 & -1.498 & -1.037 & 1.196 \\
    1.412 & 1.210 & -1.153 & -0.691 & -1.498 & -1.037 & 1.196\\
    0.013 & 0.012 & 0.014 & 0.010 & 0.014 & 0.013 & 0.005
    \\
    $(\hat{{\bf{\Sigma}}}_{n})_{(2,3)}$
    & $(\hat{{\bf{\Sigma}}}_{n})_{(2,4)}$ & $(\hat{{\bf{\Sigma}}}_{n})_{(2,5)}$  & $(\hat{{\bf{\Sigma}}}_{n})_{(2,6)}$ & $(\hat{{\bf{\Sigma}}}_{n})_{(2,7)}$ & 
    $(\hat{{\bf{\Sigma}}}_{n})_{(2,8)}$ &
    $(\hat{{\bf{\Sigma}}}_{n})_{(2,9)}$ \\ \hline
    1.310 & 0.907 & 0.706 & 0.564 & 0.988 & 0.847 & -0.806\\
    1.311 & 0.908 & 0.706 & 0.565 & 0.988 & 0.847 & -0.807
    \\
    0.008 & 0.006 & 0.006 & 0.006 & 0.007 & 0.007 & 0.008
    \\
    $(\hat{{\bf{\Sigma}}}_{n})_{(2,10)}$
    & $(\hat{{\bf{\Sigma}}}_{n})_{(2,11)}$ & $(\hat{{\bf{\Sigma}}}_{n})_{(2,12)}$  & $(\hat{{\bf{\Sigma}}}_{n})_{(3,3)}$ & $(\hat{{\bf{\Sigma}}}_{n})_{(3,4)}$ & 
    $(\hat{{\bf{\Sigma}}}_{n})_{(3,5)}$ &
    $(\hat{{\bf{\Sigma}}}_{n})_{(3,6)}$ \\ \hline
    -0.484 & -1.048 & -0.726 & 3.874 & 1.685 & 1.310 & 1.048
     \\
    -0.484 & -1.049 & -0.726 & 3.875 & 1.686 & 1.311 & 1.049
     \\
    0.006 & 0.008 & 0.007 & 0.018 & 0.010 & 0.010 & 0.010
    \\
    $(\hat{{\bf{\Sigma}}}_{n})_{(3,7)}$
    & $(\hat{{\bf{\Sigma}}}_{n})_{(3,8)}$ & $(\hat{{\bf{\Sigma}}}_{n})_{(3,9)}$  & $(\hat{{\bf{\Sigma}}}_{n})_{(3,10)}$ & $(\hat{{\bf{\Sigma}}}_{n})_{(3,11)}$ & 
    $(\hat{{\bf{\Sigma}}}_{n})_{(3,12)}$ &
    $(\hat{{\bf{\Sigma}}}_{n})_{(4,4)}$ \\ \hline
    1.835 & 1.572 & -1.498 & -0.899 & -1.947 & -1.348 & 1.976
     \\
    1.835 & 1.573 & -1.498 & -0.899 & -1.948 & -1.348 & 1.977
    \\
    0.013 & 0.013 & 0.014 & 0.010 & 0.014 & 0.013 & 0.009
    \\
    $(\hat{{\bf{\Sigma}}}_{n})_{(4,5)}$
    & $(\hat{{\bf{\Sigma}}}_{n})_{(4,6)}$ & $(\hat{{\bf{\Sigma}}}_{n})_{(4,7)}$  & $(\hat{{\bf{\Sigma}}}_{n})_{(4,8)}$ & $(\hat{{\bf{\Sigma}}}_{n})_{(4,9)}$ & 
    $(\hat{{\bf{\Sigma}}}_{n})_{(4,10)}$ &
    $(\hat{{\bf{\Sigma}}}_{n})_{(4,11)}$ \\ \hline
    0.907 & 0.726 & 1.270 & 1.089 & -1.037 & -0.622 & -1.348\\
    0.908 & 0.726 & 1.271 & 1.089 & -1.037 & -0.622 & -1.348
   \\
    0.007 & 0.007 & 0.009 & 0.009 & 0.010 & 0.007 & 0.010
    \\
    $(\hat{{\bf{\Sigma}}}_{n})_{(4,12)}$
    & $(\hat{{\bf{\Sigma}}}_{n})_{(5,5)}$ & $(\hat{{\bf{\Sigma}}}_{n})_{(5,6)}$  & $(\hat{{\bf{\Sigma}}}_{n})_{(5,7)}$ & $(\hat{{\bf{\Sigma}}}_{n})_{(5,8)}$ & 
    $(\hat{{\bf{\Sigma}}}_{n})_{(5,9)}$ &
    $(\hat{{\bf{\Sigma}}}_{n})_{(5,10)}$ \\ \hline
    -0.933 & 2.326 & 1.212 & 2.122 & 1.819 & -0.806 & -0.484
    \\
    -0.934 & 2.326 & 1.213 & 2.122 & 1.819 & -0.807 & -0.484
     \\
    0.009 & 0.011 & 0.008 & 0.011 & 0.011 & 0.010 & 0.008
    \\
    $(\hat{{\bf{\Sigma}}}_{n})_{(5,11)}$
    & $(\hat{{\bf{\Sigma}}}_{n})_{(5,12)}$ & $(\hat{{\bf{\Sigma}}}_{n})_{(6,6)}$  & $(\hat{{\bf{\Sigma}}}_{n})_{(6,7)}$ & $(\hat{{\bf{\Sigma}}}_{n})_{(6,8)}$ & 
    $(\hat{{\bf{\Sigma}}}_{n})_{(6,9)}$ &
    $(\hat{{\bf{\Sigma}}}_{n})_{(6,10)}$ \\ \hline
    -1.048 & -0.726 & 2.410 & 1.697 & 1.455 & -0.645 & -0.387
    \\
    -1.049 & -0.726 & 2.410 & 1.698 & 1.455 & -0.645 & -0.387
    \\
    0.010 & 0.010 & 0.011 & 0.011 & 0.010 & 0.010
    & 0.008 \\
    $(\hat{{\bf{\Sigma}}}_{n})_{(6,11)}$
    & $(\hat{{\bf{\Sigma}}}_{n})_{(6,12)}$ & $(\hat{{\bf{\Sigma}}}_{n})_{(7,7)}$  & $(\hat{{\bf{\Sigma}}}_{n})_{(7,8)}$ & $(\hat{{\bf{\Sigma}}}_{n})_{(7,9)}$ & 
    $(\hat{{\bf{\Sigma}}}_{n})_{(7,10)}$ &
    $(\hat{{\bf{\Sigma}}}_{n})_{(7,11)}$ \\ 
    \hline
    -0.839 & -0.581 & 3.611 & 2.546 & -1.129 & -0.677 & -1.468 \\
    -0.839 & -0.581 & 3.611 & 2.547 & -1.130 & -0.678 & 
    -1.468 \\
    0.010 & 0.010 & 0.016 & 0.014 & 0.013 & 0.010 & 0.013\\
    $(\hat{{\bf{\Sigma}}}_{n})_{(7,12)}$
    & $(\hat{{\bf{\Sigma}}}_{n})_{(8,8)}$ & $(\hat{{\bf{\Sigma}}}_{n})_{(8,9)}$  & $(\hat{{\bf{\Sigma}}}_{n})_{(8,10)}$ & $(\hat{{\bf{\Sigma}}}_{n})_{(8,11)}$ & 
    $(\hat{{\bf{\Sigma}}}_{n})_{(8,12)}$ &
    $(\hat{{\bf{\Sigma}}}_{n})_{(9,9)}$ \\
    \hline
    -1.016 & 3.392 & -0.968 & -0.581 & -1.258 & -0.871 & 4.382 \\
    -1.017 & 3.393 & -0.968 & -0.581 & -1.259 & -0.871 & 4.382 \\
    0.012 & 0.015 & 0.012 & 0.009 & 0.012 & 0.012 & 0.020\\
    $(\hat{{\bf{\Sigma}}}_{n})_{(9,10)}$
    & $(\hat{{\bf{\Sigma}}}_{n})_{(9,11)}$ & $(\hat{{\bf{\Sigma}}}_{n})_{(9,12)}$  & $(\hat{{\bf{\Sigma}}}_{n})_{(10,10)}$ & $(\hat{{\bf{\Sigma}}}_{n})_{(10,11)}$ & 
    $(\hat{{\bf{\Sigma}}}_{n})_{(10,12)}$ &
    $(\hat{{\bf{\Sigma}}}_{n})_{(11,11)}$ \\
    \hline
    1.279 & 2.771 & 1.918 & 2.457 & 1.663 & 1.151 & 4.092 \\
    1.279 & 2.772 & 1.919 & 2.457 & 1.663 & 1.151 & 4.093 \\  
    0.011 & 0.016 & 0.014 & 0.011 & 0.011 & 0.010 & 0.018\\
    $(\hat{{\bf{\Sigma}}}_{n})_{(11,12)}$
    & $(\hat{{\bf{\Sigma}}}_{n})_{(12,12)}$ &  \\
    \hline
    2.494 & 3.687\\
    2.494 & 3.687 \\
    0.015 & 0.016\\
    \caption{Sample mean and sample standard deviation (S.D.) of $\hat{\bf{\Sigma}}_n$.}\label{sigmatable}
\end{longtable}
\begin{longtable}[h]{cccccccccc}
    \\
    & $\hat{\theta}_n^{(1)}$ & 
    $\hat{\theta}_n^{(2)}$
    &
    $\hat{\theta}_n^{(3)}$ &
    $\hat{\theta}_n^{(4)}$ & 
    $\hat{\theta}_n^{(5)}$ &
    $\hat{\theta}_n^{(6)}$ &
    $\hat{\theta}_n^{(7)}$ &
    $\hat{\theta}_n^{(8)}$ &
    $\hat{\theta}_n^{(9)}$ 
    \\ \hline
    True value & 0.700 & 1.300 & 0.900 & 0.800  & 1.400 & 1.200 & 0.600  & 1.300 & 0.900\\
    Mean & 0.700 & 1.300 & 0.900 & 0.800 & 1.400 & 1.200 & 0.600 & 1.300 &  0.900 \\
    S.D. & 0.004 & 0.007 & 0.005 & 0.004 & 0.004 & 0.004 & 0.004 & 0.005 & 0.005 \\
    $\hat{\theta}_n^{(10)}$ & 
    $\hat{\theta}_n^{(11)}$
    &
    $\hat{\theta}_n^{(12)}$ &
    $\hat{\theta}_n^{(13)}$ & 
    $\hat{\theta}_n^{(14)}$ &
    $\hat{\theta}_n^{(15)}$ &
    $\hat{\theta}_n^{(16)}$ &
    $\hat{\theta}_n^{(17)}$ &
    $\hat{\theta}_n^{(18)}$ &
    $\hat{\theta}_n^{(19)}$
    \\ \hline
    0.700 & -0.800 & 1.440 & 2.560 & 0.490 & 1.440 & 0.810 & 0.810 & 1.440 & 0.640 
    \\
    0.700 & -0.800 & 1.441 & 2.560 & 0.490 & 1.440 & 0.810 & 0.810 & 1.440 & 0.640 \\
    0.004 & 0.006 & 0.014 & 0.013 & 0.003 & 0.009 & 0.005 & 0.005 & 0.007 & 0.006
    \\
    $\hat{\theta}_n^{(20)}$ & 
    $\hat{\theta}_n^{(21)}$
    &
    $\hat{\theta}_n^{(22)}$ &
    $\hat{\theta}_n^{(23)}$ & 
    $\hat{\theta}_n^{(24)}$ &
    $\hat{\theta}_n^{(25)}$ &
    $\hat{\theta}_n^{(26)}$ & &\\ \hline
    1.210 & 2.250 & 1.690 & 0.490 & 1.960 & 0.810 & 1.210
    \\
    1.210 & 2.250 & 1.690 & 0.490 & 1.960 & 0.810 & 1.210
    \\
    0.007 & 0.012 & 0.008 & 0.009 & 0.010 & 0.006 & 0.011
    \vspace{2mm}\\
    \caption{Sample mean and sample standard deviation (S.D.) of $\hat{\theta}_n$.}\label{thetatable}
\end{longtable}
\vspace{2mm}
\begin{longtable}[h]{c}
    The number of rejections  \\ \hline
    10000\\     
    \caption{The number of rejections of the misspecified model.}\label{TableH1}
\end{longtable}
\clearpage
\ \\ \ \\ \ \\
\begin{figure}[h]
    \centering
    \includegraphics[width=0.27\columnwidth]{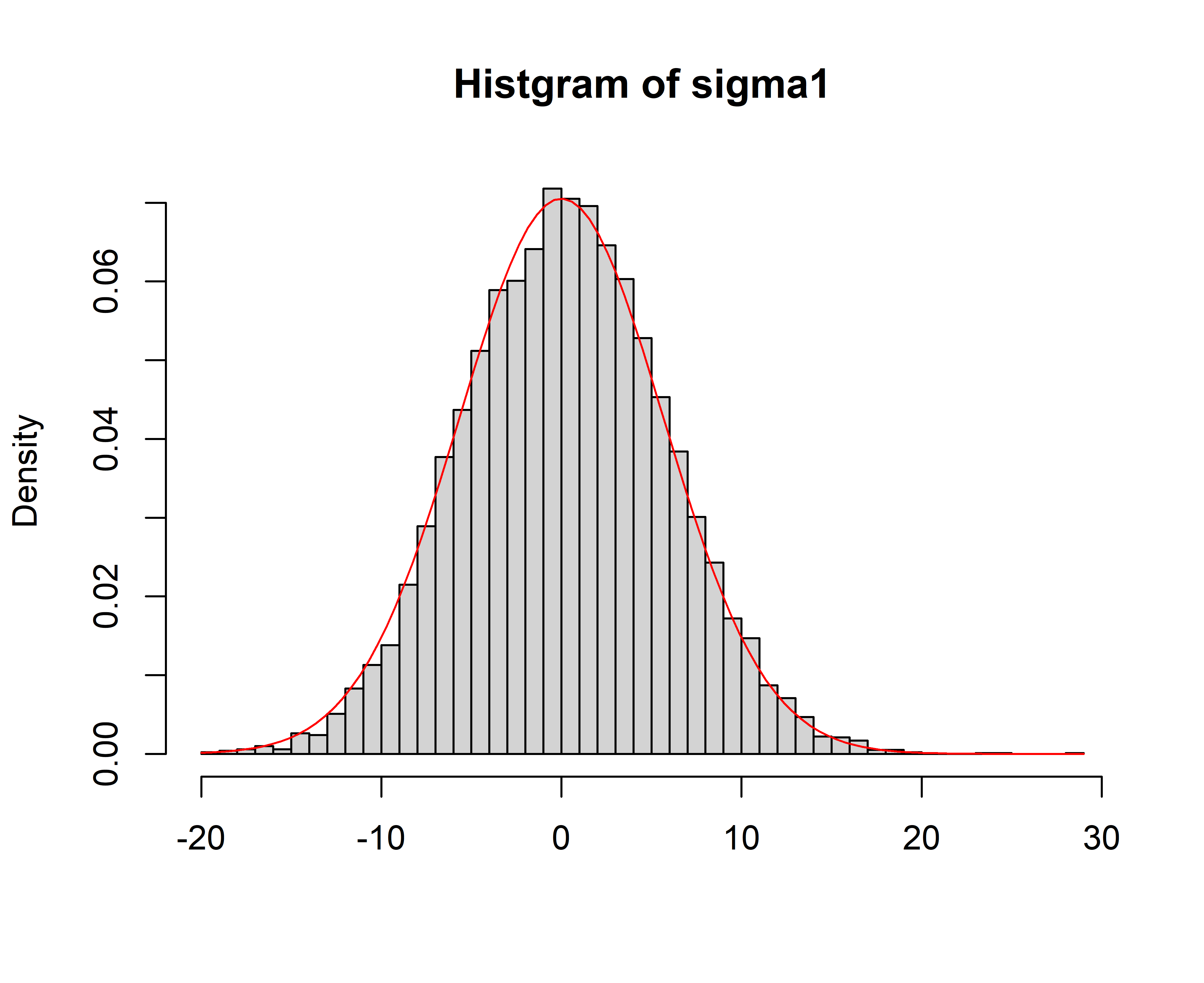}
    \includegraphics[width=0.27\columnwidth]{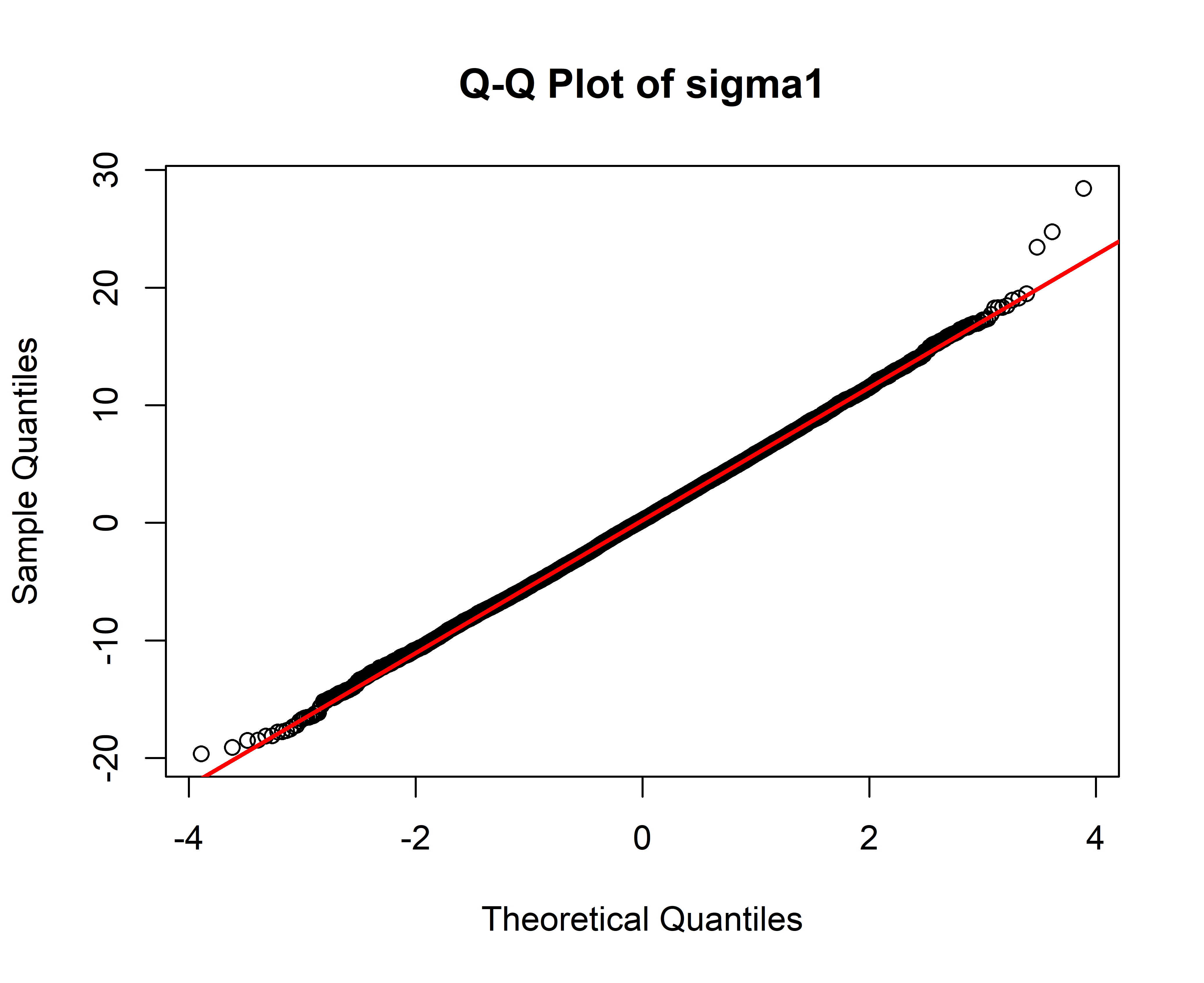}
    \includegraphics[width=0.27\columnwidth]{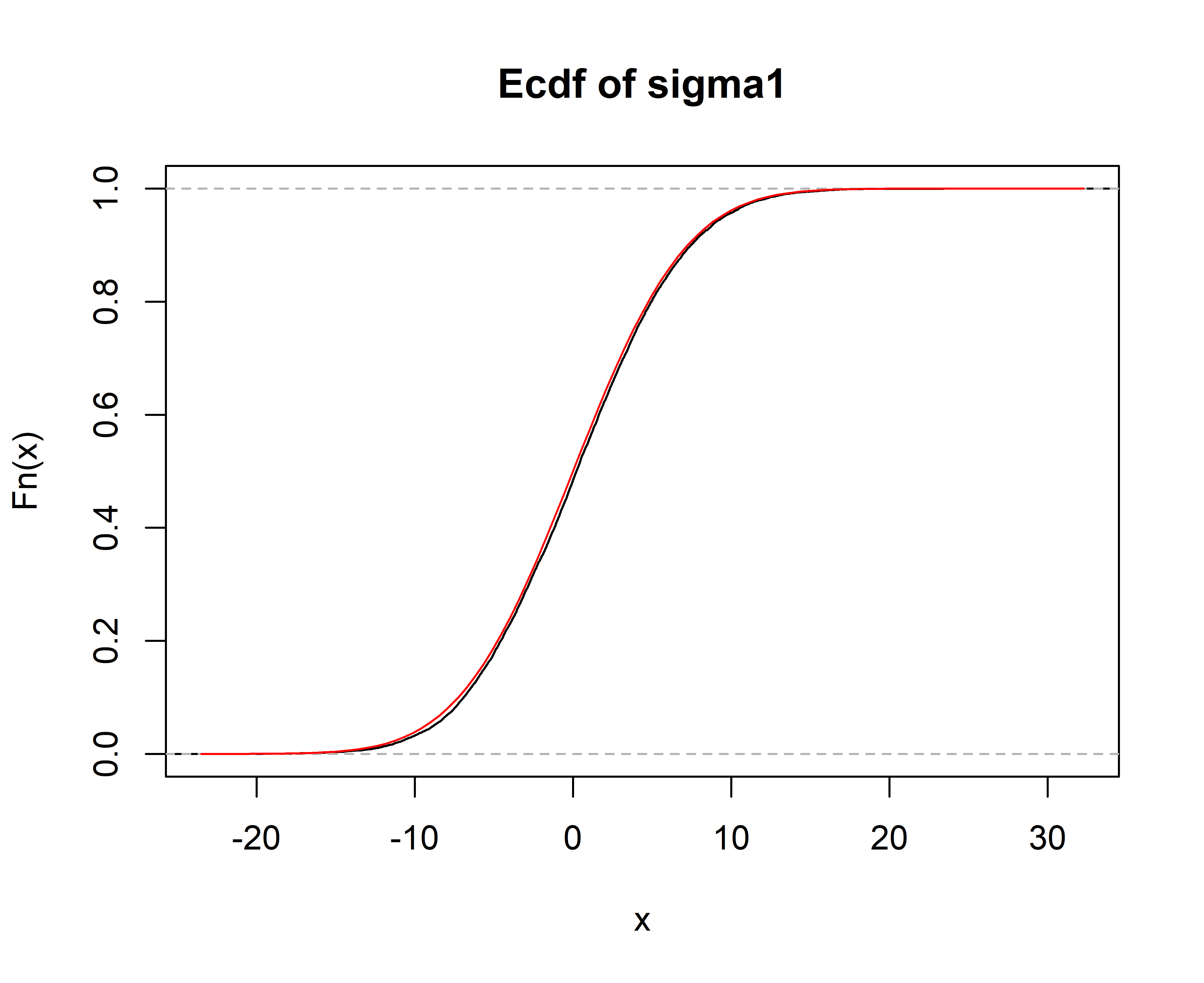}
    \caption{Histogram (left), Q-Q plot (middle) and empirical distribution (right) of $\sqrt{n}((\hat{\bf{\Sigma}}_{n})_{11}-({\bf{\Sigma}}_0)_{11})$. The red lines are theoretical curves.}
    \label{sigmafigure}
\end{figure}
\ \\ 
\begin{figure}[h]
    \centering
    \includegraphics[width=0.27\columnwidth]{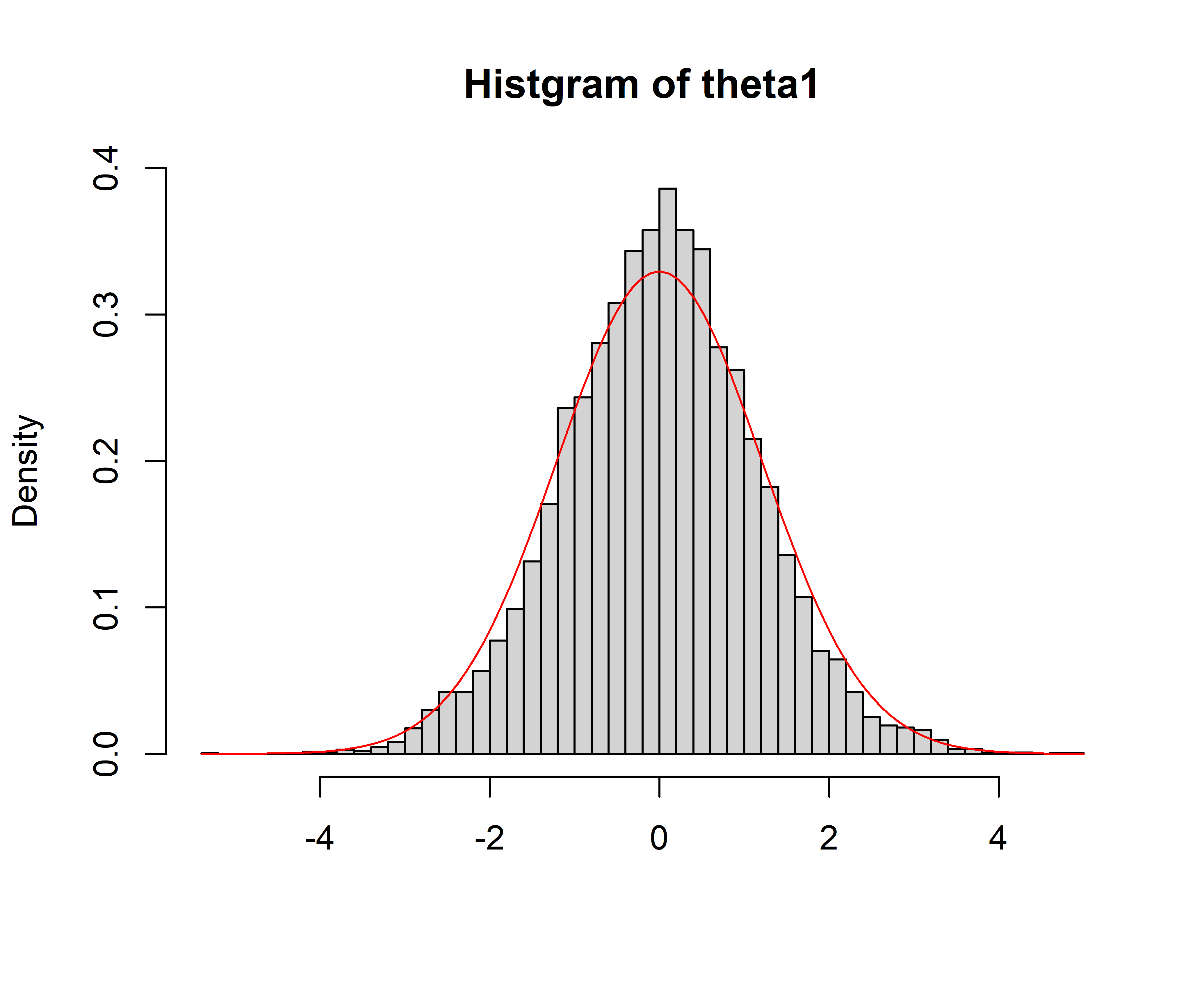}
    \includegraphics[width=0.27\columnwidth]{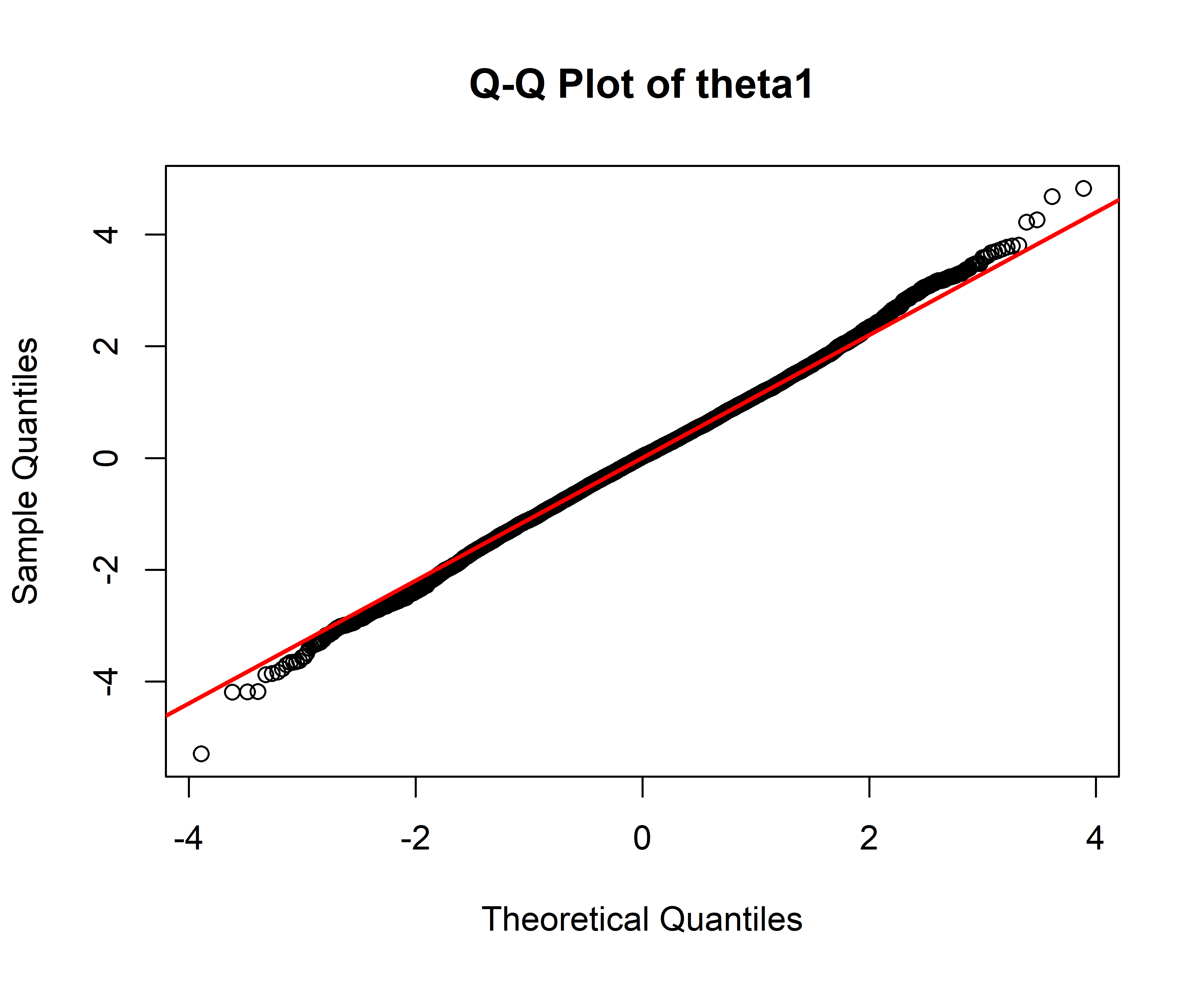}
    \includegraphics[width=0.27\columnwidth]{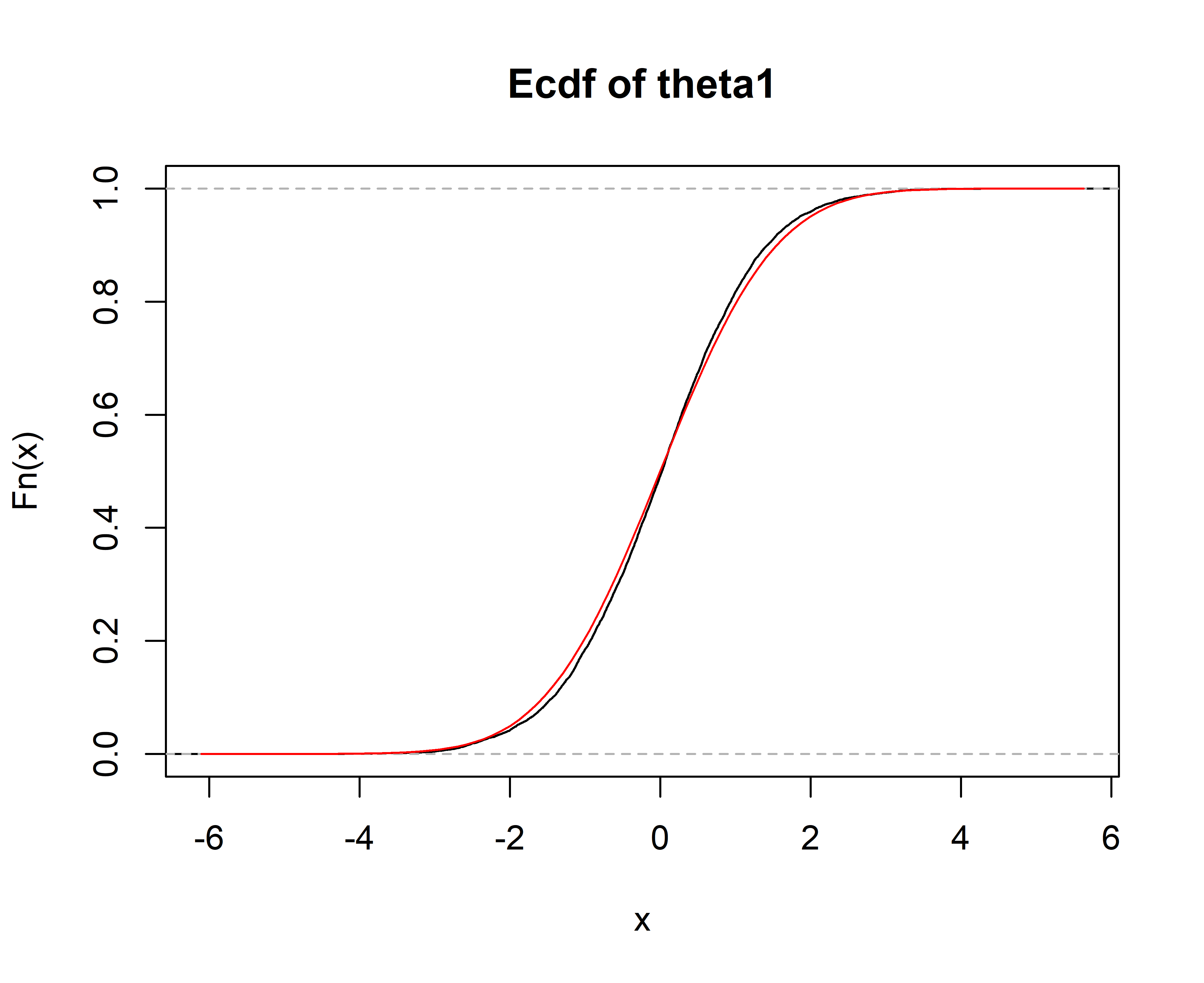}
    \caption{Histogram (left), Q-Q plot (middle) and empirical distribution (right) of $\sqrt{n}(\hat{\theta}_{n}^{(1)}-\theta_{0}^{(1)})$. The red lines are theoretical curves.}\label{thetafigure}
\end{figure}
\ \\
\begin{figure}[h]
    \centering
    \includegraphics[width=0.27\columnwidth]{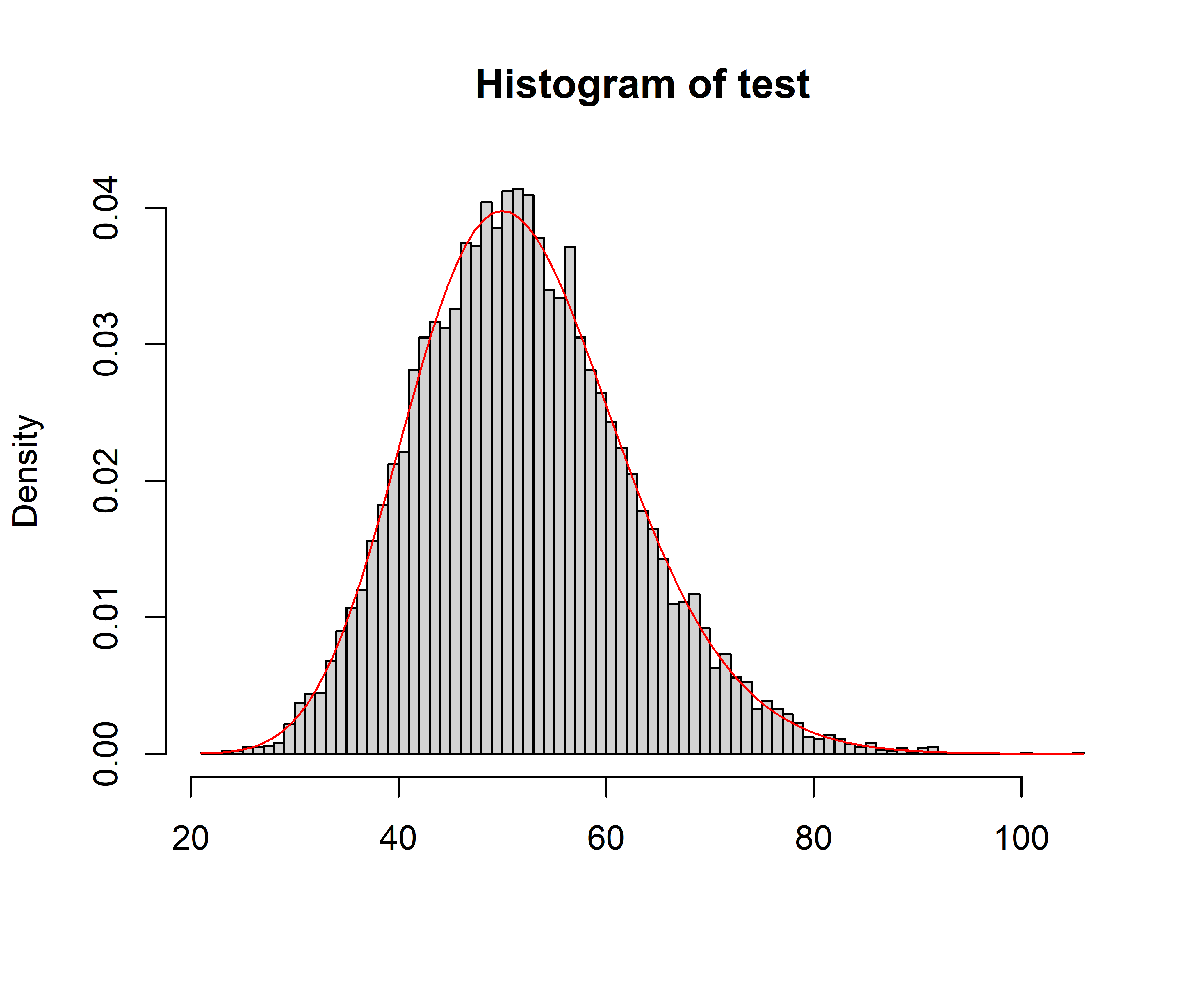}
    \includegraphics[width=0.27\columnwidth]{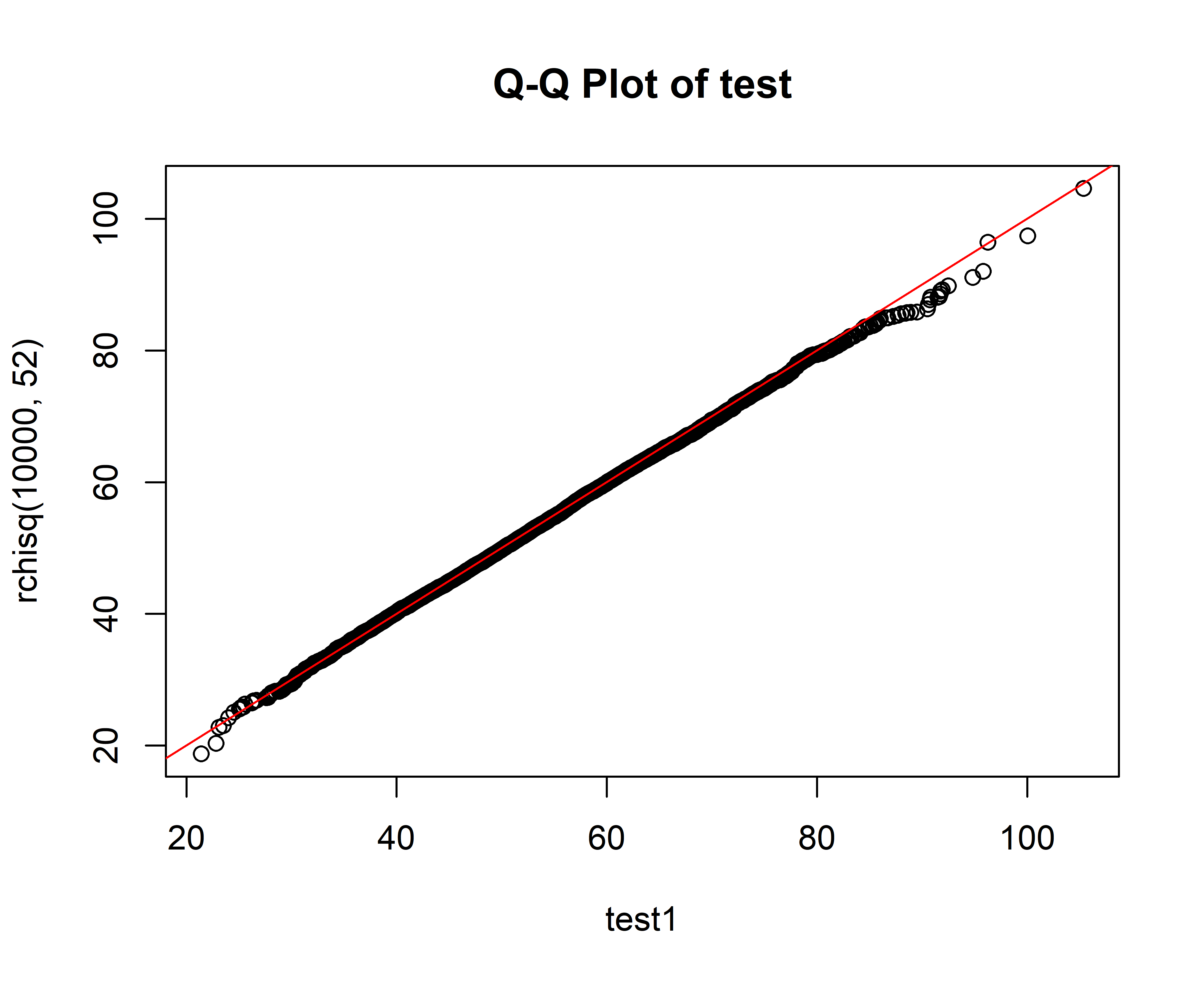}
    \includegraphics[width=0.27\columnwidth]{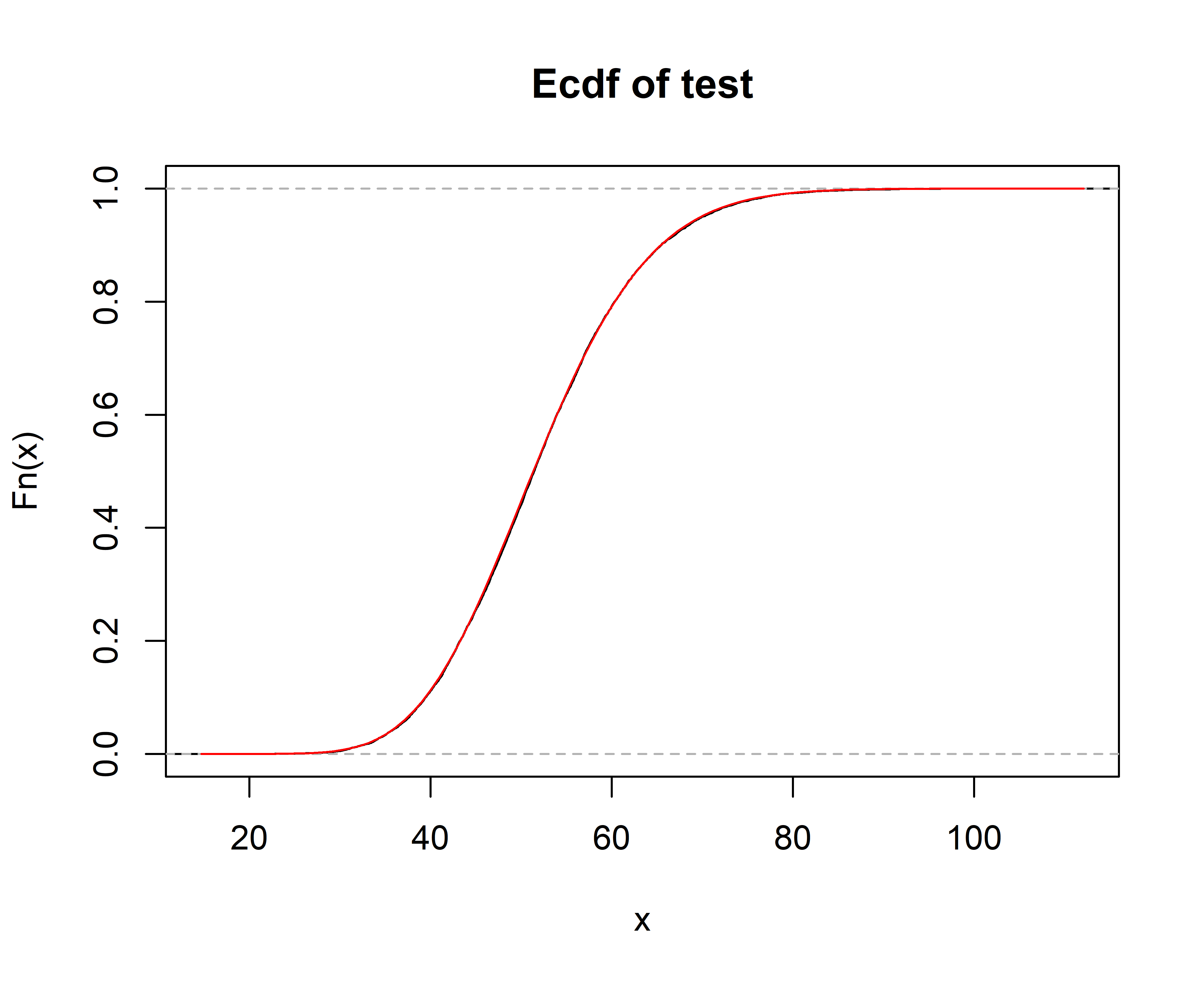}
\caption{Histogram (left), Q-Q plot (middle) and empirical distribution (right) of ${\bf{T}}_{n}$. The red lines are theoretical curves.}
\label{testfigure}
\end{figure}
\clearpage
\fontsize{10pt}{16pt}\selectfont
\section{Proofs} \label{Proof}
In Lemmas \ref{Pine}-\ref{Dlemma}, Propositions \ref{Nprop}-\ref{Econvprop2}, and Theorem \ref{Qtheorem}, we simply write ${\bf{P}}_{{\bf{\Sigma}}_0}$ as ${\bf{P}}$, and 
${\bf{E}}$ stands for expectation under ${\bf{P}}$. $C$ denotes a general positive constant whose value can  vary depending on the context without being specifically mentioned. Let $\lambda_0=\sum_{i=1}^4\lambda_{i,0}$.
\begin{proof}[\textbf{Proof of Lemma \ref{Pine}}] 
First, we show (\ref{supine1}). Since
\begin{align*}
    |X_t-X_{t_{i-1}^n}|&\leq |X_{1,t}-X_{1,t_{i-1}^n}|+|X_{2,t}-X_{2,t_{i-1}^n}|\\
    &=|{\bf{\Lambda}}_{1,0}(\xi_t-\xi_{t_{i-1}^n})+\delta_t-\delta_{t_{i-1}^n}|\\
    &\qquad+|
    {\bf{\Lambda}}_{2,0}{\bf{\Psi}}_0^{-1}{\bf{\Gamma}}_0(\xi_t-\xi_{t_{i-1}^n})+{\bf{\Lambda}}_{2,0}{\bf{\Psi}}_0^{-1}(\zeta_t-\zeta_{t_{i-1}^n})+\varepsilon_t-\varepsilon_{t_{i-1}^n}|
    \\
    &\leq C|\xi_t-\xi_{t_{i-1}^n}|+|\delta_t-\delta_{t_{i-1}^n}|+|\varepsilon_t-\varepsilon_{t_{i-1}^n}|+C|\zeta_t-\zeta_{t_{i-1}^n}|,
\end{align*}
we obtain
\begin{align*}
    \sup_{t\in[t_{i-1}^n,\tau_i^n)}|X_t-X_{t_{i-1}^n}|
    &\leq C\sup_{t\in[t_{i-1}^n,\tau_i^n)}|\xi_t-\xi_{t_{i-1}^n}|+\sup_{t\in[t_{i-1}^n,\tau_i^n)}|\delta_t-\delta_{t_{i-1}^n}|\\
    &\qquad\qquad\quad+\sup_{t\in[t_{i-1}^n,\tau_i^n)}|\varepsilon_t-\varepsilon_{t_{i-1}^n}|+C\sup_{t\in[t_{i-1}^n,\tau_i^n)}|\zeta_t-\zeta_{t_{i-1}^n}|.
\end{align*}
In a similar manner to Lemma 2.1 in Shimizu and Yoshida \cite{Shimizu(2006)}, it is shown that
\begin{align*}
    {\bf{P}}\biggl(C\sup_{t\in[t_{i-1}^n,\tau_i^n)}|\xi_{t}-\xi_{t_{i-1}^n}|>Dh_n^{\rho} \Bigl|\mathcal{F}_{i-1}^n\biggr)&=R_{i-1}(h_n^p,\xi),\\
    {\bf{P}}\biggl(\sup_{t\in[t_{i-1}^n,\tau_i^n)}|\delta_t-\delta_{t_{i-1}^n}|>Dh_n^{\rho} \Bigl|\mathcal{F}_{i-1}^n\biggr)&=R_{i-1}(h_n^p,\delta),\\
    {\bf{P}}\biggl(\sup_{t\in[t_{i-1}^n,\tau_i^n)}|\varepsilon_t-\varepsilon_{t_{i-1}^n}|>Dh_n^{\rho} \Bigl|\mathcal{F}_{i-1}^n\biggr)&=R_{i-1}(h_n^p,\varepsilon)
\end{align*}
and
\begin{align*}
    {\bf{P}}\biggl(C\sup_{t\in[t_{i-1}^n,\tau_i^n)}|\zeta_t-\zeta_{t_{i-1}^n}|>Dh_n^{\rho} \Bigl|\mathcal{F}_{i-1}^n\biggr)&=R_{i-1}(h_n^p,\zeta)
\end{align*}
for any $p\geq 1$. Therefore, one gets
\begin{align*}
    &\quad\ {\bf{P}}\biggl(\sup_{t\in[t_{i-1}^n,\tau_i^n)}|X_t-X_{t_{i-1}^n}|>Dh_n^{\rho} \Bigl|\mathcal{F}_{i-1}^n\biggr)\\
    &\leq {\bf{P}}\biggl(C\sup_{t\in[t_{i-1}^n,\tau_i^n)}|\xi_t-\xi_{t_{i-1}^n}|>\frac{Dh_n^{\rho}}{4} \Bigl|\mathcal{F}_{i-1}^n\biggr)+{\bf{P}}\biggl(\sup_{t\in[t_{i-1}^n,\tau_i^n)}|\delta_t-\delta_{t_{i-1}^n}|>\frac{Dh_n^{\rho}}{4} \Bigl|\mathcal{F}_{i-1}^n\biggr)\\
    &\qquad+{\bf{P}}\biggl(\sup_{t\in[t_{i-1}^n,\tau_i^n)}|\varepsilon_t-\varepsilon_{t_{i-1}^n}|>\frac{Dh_n^{\rho}}{4} \Bigl|\mathcal{F}_{i-1}^n\biggr)+{\bf{P}}\biggl(C\sup_{t\in[t_{i-1}^n,\tau_i^n)}|\zeta_t-\zeta_{t_{i-1}^n}|>\frac{Dh_n^{\rho}}{4} \Bigl|\mathcal{F}_{i-1}^n\biggr)\\
    &=R_{i-1}(h_n^p,\xi,\delta,\varepsilon,\zeta)
\end{align*}
for all $p\geq 1$. In an analogous manner to Lemma 2.1 in Shimizu and Yoshida \cite{Shimizu(2006)}, we have
\begin{align*}
    {\bf{P}}\biggl(C\sup_{t\in[\eta_{i}^n,t_i^n)}|\xi_{t_{i}^n}-\xi_t|>Dh_n^{\rho}\ \Bigl|\mathcal{F}_{i-1}^n\biggr)&=R_{i-1}(h_n^p,\xi),\\
    {\bf{P}}\biggl(\sup_{t\in[\eta_{i}^n,t_i^n)}|\delta_{t_{i}^n}-\delta_t|>Dh_n^{\rho}\ \Bigl|\mathcal{F}_{i-1}^n\biggr)&=R_{i-1}(h_n^p,\delta),\\
    {\bf{P}}\biggl(\sup_{t\in[\eta_{i}^n,t_i^n)}|\varepsilon_{t_{i}^n}-\varepsilon_t|>Dh_n^{\rho}\ \Bigl|\mathcal{F}_{i-1}^n\biggr)&=R_{i-1}(h_n^p,\varepsilon)
\end{align*}
and
\begin{align*}
    {\bf{P}}\biggl(C\sup_{t\in[\eta_{i}^n,t_i^n)}|\zeta_{t_{i}^n}-\zeta_t|>Dh_n^{\rho}\ \Bigl|\mathcal{F}_{i-1}^n\biggr)&=R_{i-1}(h_n^p,\zeta)
\end{align*}
for any $p\geq 1$, which implies (\ref{supine2}).
\end{proof}
\begin{lemma}\label{Mlemma}
Let $m\geq n$. For a matrix $M\in\mathbb{R}^{m\times n}$ and a vector $x\in\mathbb{R}^n$, if $M$ is full column rank,
\begin{align*}
    |Mx|\geq \Bigl(\min_{i} \sigma_i\Bigr)|x|,
\end{align*}
where for $i=1,\ldots,n$, $\sigma_i>0$ and $\sigma_i$  is a singular value of $M$.
\end{lemma}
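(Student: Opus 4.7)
The plan is to prove Lemma \ref{Mlemma} by a straightforward singular value decomposition (SVD) argument, which is the standard route for statements comparing $|Mx|$ to $|x|$ via extremal singular values.

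First, I would invoke the SVD of $M$: since $M\in\mathbb{R}^{m\times n}$ with $m\geq n$ and full column rank, there exist orthogonal matrices $U\in\mathbb{R}^{m\times m}$ and $V\in\mathbb{R}^{n\times n}$, together with a diagonal matrix $\Sigma_M=\Diag(\sigma_1,\ldots,\sigma_n)^{\top}$ (padded with zero rows if $m>n$), such that $M=U\Sigma_M V^{\top}$, with all $\sigma_i>0$ by the full column rank assumption.

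Next, I would compute $|Mx|^2$ directly. Using orthogonality of $U$,
\begin{align*}
|Mx|^2 = x^{\top}M^{\top}Mx = x^{\top}V\Sigma_M^{\top}U^{\top}U\Sigma_M V^{\top}x = y^{\top}\Sigma_M^{\top}\Sigma_M y,
\end{align*}
where $y=V^{\top}x\in\mathbb{R}^n$. Since $\Sigma_M^{\top}\Sigma_M=\Diag(\sigma_1^2,\ldots,\sigma_n^2)$, this expands to $\sum_{i=1}^n \sigma_i^2 y_i^2 \geq \bigl(\min_i \sigma_i\bigr)^2 |y|^2$. Finally, since $V$ is orthogonal, $|y|=|V^{\top}x|=|x|$, so taking square roots yields $|Mx|\geq (\min_i \sigma_i)|x|$, which is the desired bound.

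There is essentially no obstacle here, as the proof is entirely a routine application of SVD and the orthogonal invariance of the Euclidean norm; the only point requiring care is noting that the full column rank hypothesis is exactly what guarantees $\min_i \sigma_i>0$, so that the lower bound is nontrivial.
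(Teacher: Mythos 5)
Your proof is correct and follows essentially the same argument as the paper: an SVD of $M$, the identity $|Mx|^2=\sum_i\sigma_i^2\bigl((V^{\top}x)^{(i)}\bigr)^2$, and orthogonal invariance of the norm via $|V^{\top}x|=|x|$. The only cosmetic difference is that you use the full SVD with square $U$ padded by zero rows, whereas the paper uses the thin SVD with $U^{\top}U=\mathbb{I}_n$; this changes nothing in the argument.
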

\begin{proof}
First, we consider the singular value decomposition of $M$:
\begin{align*}
    M=U\Lambda V^{\top},
\end{align*}
where $U$ and $V$ are $m\times n$ and $n\times n$ matrices such that $U^{\top}U=V^{\top}V=\mathbb{I}_n$, and
\begin{align*}
   \Lambda=\Diag(\sigma_1,\ldots,\sigma_n)^{\top}.
\end{align*}
Using the fact that $U^{\top}U=\mathbb{I}_n$, one gets
\begin{align*}
    |Mx|^2=x^{\top}M^{\top}Mx
    &=x^{\top}V\Lambda U^{\top}U\Lambda V^{\top}x\\
    &=x^{\top}V\Lambda^2 V^{\top}x=\sum_{i=1}^n\sigma_i^2 (V^{\top}x)^{(i)2}\geq\Bigl(\min_{i} \sigma_i^2\Bigr)|V^{\top}x|^2.
\end{align*}
Note that $VV^{\top}=\mathbb{I}_n$ since $V$ is a square matrix and $V^{\top}V=\mathbb{I}_n$. Consequently, we have
\begin{align*}
    |V^{\top}x|^2=x^{\top}VV^{\top}x=|x|^2,
\end{align*}
which completes the proof.
\end{proof}
\begin{proof}[\textbf{Proofs of Lemmas \ref{Clemma}-\ref{Dlemma}}]
First of all, we prove (\ref{CK4}). It is sufficient to show the case where $(k_1,k_2,k_3,k_4)=(2,0,0,0)\in K_4$. Since
\begin{align*}
    {\bf{P}}\Bigl(C^{n}_{i,2,0,0,0}\big|\mathcal{F}_{i-1}^n\Bigr)
    &={\bf{P}}\Bigl(|\Delta_{i}^n X|\leq Dh_n^{\rho}, J_{i,1}^n\geq 2, J_{i,2}^n=0, J_{i,3}^n=0, J_{i,4}^n=0\big|\mathcal{F}_{i-1}^n\Bigr)\\
    &\leq {\bf{P}}\Bigl(J_{i,1}^n\geq 2|\mathcal{F}_{i-1}^n\Bigr)
\end{align*}
and 
\begin{align*}
    {\bf{P}}\Bigl(J_{i,1}^n\geq 2|\mathcal{F}_{i-1}^n\Bigr)
    &=\sum_{k=2}^{\infty}\frac{(\lambda_{1,0}h_n)^k}{k!}e^{-\lambda_{1,0} h_n}\\
    &=\sum_{l=0}^{\infty}\frac{(\lambda_{1,0}h_n)^{l+2}}{(l+2)!}e^{-\lambda_{1,0} h_n}\\
    &\leq (\lambda_{1,0}h_n)^{2}\sum_{l=0}^{\infty}\frac{(\lambda_{1,0}h_n)^{l}}{l!}e^{-\lambda_{1,0}h_n}=\lambda_{1,0}^{2}h_n^2\leq \Bigl(\max_{j=1,2,3,4}\lambda_{j,0}^2\Bigr) h_n^2
\end{align*}
for $i=1,\ldots,n$, we obtain
\begin{align*}
     {\bf{P}}\Bigl(C^{n}_{i,2,0,0,0}\big|\mathcal{F}_{i-1}^n\Bigr)\leq \Bigl(\max_{j=1,2,3,4}\lambda_{j,0}^2\Bigr) h_n^2.
\end{align*}
Similarly, (\ref{DK4}) can be shown. Furthermore, we see
\begin{align*}
    {\bf{P}}\Bigl(C^{n}_{i,1,1,1,1}\big|\mathcal{F}_{i-1}^n\Bigr)
    &={\bf{P}}\Bigl(|\Delta_{i}^n X|\leq Dh_n^{\rho}, J_{i,1}^n=1, J_{i,2}^n=1, J_{i,3}^n=1, J_{i,4}^n=1\big|\mathcal{F}_{i-1}^n\Bigr)\\
    &\leq {\bf{P}}\Bigl(J_{i,1}^n=1, J_{i,2}^n=1, J_{i,3}^n=1, J_{i,4}^n=1\big|\mathcal{F}_{i-1}^n\Bigr)\\
    &={\bf{P}}\Bigl(J_{i,1}^n=1\big|\mathcal{F}_{i-1}^n\Bigr){\bf{P}}\Bigl(J_{i,2}^n=1\big|\mathcal{F}_{i-1}^n\Bigr){\bf{P}}\Bigl(J_{i,3}^n=1\big|\mathcal{F}_{i-1}^n\Bigr){\bf{P}}\Bigl(J_{i,4}^n=1\big|\mathcal{F}_{i-1}^n\Bigr)\\
    &=e^{-\lambda_{0}h_n}\lambda_{1,0}\lambda_{2,0}\lambda_{3,0}\lambda_{4,0}h_n^4\\
    &\leq \lambda_{1,0}\lambda_{2,0}\lambda_{3,0}\lambda_{4,0} h_n^4,
\end{align*}
which yields (\ref{C1}). In the same way, we can obtain (\ref{CK2})-(\ref{CK3}), (\ref{D1}), and (\ref{DK2})-(\ref{DK3}). Note that $\tau_i^n=t_{i}^n$ on 
\begin{align*}
    \bigl\{J_{i,1}^n=0, J_{i,2}^n=0, J_{i,3}^n=0, J_{i,4}^n=0\bigr\},
\end{align*}
and Taylor’s theorem implies 
\begin{align*}
    1-e^{-\lambda_{0} h_n}\leq \lambda_0 h_n.
\end{align*}
It holds from Lemma \ref{Pine} that
\begin{align*}
    {\bf{P}}\Bigl(D^{n}_{i,0,0,0,0}\big|\mathcal{F}_{i-1}^n\Bigr)
    &={\bf{P}}\Bigl(|\Delta_{i}^n X|> Dh_n^{\rho}, J_{i,1}^n=0, J_{i,2}^n=0, J_{i,3}^n=0, J_{i,4}^n=0\big|\mathcal{F}_{i-1}^n\Bigr)\\
    &\leq {\bf{P}}\Bigl(|X_{\tau_i^n}-X_{t_{i-1}^n}|> Dh_n^{\rho} \big|\mathcal{F}_{i-1}^n \Bigr)\\
    &\leq {\bf{P}}\biggl(\sup_{t\in [t_{i-1}^n,\tau_i^n)}|X_{t}-X_{t_{i-1}^n}|> Dh_n^{\rho}\ \Big|\mathcal{F}_{i-1}^n \biggr)=R_{i-1}(h_n^{p},\xi,\delta,\varepsilon,\zeta)
\end{align*}
and 
\begin{align*}
    {\bf{P}}\Bigl(C^{n}_{i,0,0,0,0}\big|\mathcal{F}_{i-1}^n\Bigr)
    &={\bf{P}}\Bigl(|\Delta_{i}^n X|\leq Dh_n^{\rho}, J_{i,1}^n=0, J_{i,2}^n=0, J_{i,3}^n=0, J_{i,4}^n=0\big|\mathcal{F}_{i-1}^n\Bigr)\\
    &={\bf{P}}\Bigl(J_{i,1}^n=0, J_{i,2}^n=0, J_{i,3}^n=0, J_{i,4}^n=0\big|\mathcal{F}_{i-1}^n\Bigr)\\
    &\qquad\quad-{\bf{P}}\Bigl(|\Delta_{i}^n X|> Dh_n^{\rho}, J_{i,1}^n=0, J_{i,2}^n=0, J_{i,3}^n=0, J_{i,4}^n=0\big|\mathcal{F}_{i-1}^n\Bigr)\\
    &=e^{-\lambda_0 h_n}-R_{i-1}(h_n^{p},\xi,\delta,\varepsilon,\zeta)\\
    &=\tilde{R}_{i-1}(h_n,\xi,\delta,\varepsilon,\zeta)
\end{align*}
for any $p\geq 1$, which completes the proof of (\ref{C0}) and (\ref{D0}). Finally, we consider (\ref{CK1}) and (\ref{DK11})-(\ref{DK14}). It is sufficient to show the case where $(k_1,k_2,k_3,k_4)=(1,0,0,0)\in K_1$. First, we see
\begin{align*}
    &\quad\ {\bf{P}}\Bigl(C^{n}_{i,1,0,0,0}\big|\mathcal{F}_{i-1}^n\Bigr)\\
    &={\bf{P}}\biggl(|\Delta_{i}^n X|\leq Dh_n^{\rho}, J_{i,1}^n=1, J_{i,2}^n=0, J_{i,3}^n=0, J_{i,4}^n=0, |\Delta Z_{1,\tau_i^n}|> \frac{2Dh_n^{\rho}}{\sigma_0 c_{1,0}}\Big|\mathcal{F}_{i-1}^n\biggr)\\
    &\qquad+{\bf{P}}\biggl(|\Delta_{i}^n X|\leq Dh_n^{\rho}, J_{i,1}^n=1, J_{i,2}^n=0, J_{i,3}^n=0, J_{i,4}^n=0, |\Delta Z_{1,\tau_i^n}|\leq \frac{2Dh_n^{\rho}}{\sigma_0 c_{1,0}}\Big|\mathcal{F}_{i-1}^n\biggr),
\end{align*}
where $\sigma_0$ is a minimum singular value of ${\bf{\Lambda}}_{1,0}$, $c_{1,0}$ is the positive constant in ${\bf{[A4]}}$, 
\begin{align*}
    Z_{1,t}=\int_{[0,t]\times E_1}z_1 p_1(dt,dz_1)
\end{align*}
and $\Delta Z_{1,\tau_i^n}$ has the density $F_1$ under $\mathcal{F}_{i-1}^n$. Note that $\sigma_0>0$ since ${\bf{\Lambda}}_{1,0}$ is full column rank.
On
\begin{align*}
    \bigl\{J_{i,1}^n=1, J_{i,2}^n=0, J_{i,3}^n=0, J_{i,4}^n=0\bigr\},
\end{align*}
it follows from Lemma \ref{Mlemma} that
\begin{align*}
   |X_{\tau_i^n}-X_{\tau_i^n-}|&=\Biggl|\begin{pmatrix}
    {\bf{\Lambda}}_{1,0}\\
    {\bf{\Lambda}}_{2,0}{\bf{\Psi}}_0^{-1}{\bf{\Gamma}}_0
    \end{pmatrix}(\xi_{\tau_i^n}-\xi_{\tau_i^n-})
    \Biggr|\\
    &\geq \Bigl| {\bf{\Lambda}}_{1,0}(\xi_{\tau_i^n}-\xi_{\tau_i^n-})\Bigr|\\
    &\geq \sigma_0|\xi_{\tau_i^n}-\xi_{\tau_i^n-}|
    =\sigma_{0}\bigl|c_1(\xi_{\tau_i^n},\Delta Z_{1,\tau_{i}^n})\bigr|\geq \sigma_0 c_{1,0}|\Delta Z_{1,\tau_{i}^n}|
\end{align*}
when $|\Delta \xi_{\tau_{i}^n}|$ is small enough, and it holds from the triangle inequality and $\tau_i^n=\eta_{i}^n$ that
\begin{align*}
    |X_{\tau_i^n}-X_{\tau_i^n-}|
    &\leq |X_{\tau_i^n}-X_{t_{i}^n}|+|X_{t_{i}^n}-X_{t_{i-1}^n}|+|X_{t_{i-1}^n}-X_{\tau_i^n-}|\\
    &=|X_{t_{i}^n}-X_{\eta_i^n}|+|\Delta_i^n X|+|X_{\tau_i^n-}-X_{t_{i-1}^n}|.
\end{align*}
Thus, one gets
\begin{align*}
    \sup_{t\in[\eta_{i}^n,t_i^n)}|X_{t_i^n}-X_{t}|+\sup_{t\in [t_{i-1}^n,\tau_i^n)}|X_t-X_{t_{i-1}^n}|
    &\geq |X_{t_i^n}-X_{\eta_i^n}\bigr|+\bigl|X_{\tau_i^n-}-X_{t_{i-1}^n}|\\
    &\geq |X_{\tau_i^n}-X_{\tau_i^n-}\bigr|-\bigl|\Delta_i^n X|\\
    &\geq \sigma_0 c_{1,0}|\Delta Z_{1,\tau_{i}^n}|-Dh_n^{\rho}>Dh_n^{\rho}
\end{align*}
when $|\Delta \xi_{\tau_{i}^n}|$ is small enough on 
\begin{align*}
    \biggl\{|\Delta_{i}^n X|\leq Dh_n^{\rho}, J_{i,1}^n=1, J_{i,2}^n=0, J_{i,3}^n=0, J_{i,4}^n=0, |\Delta Z_{1,\tau_i^n}|> \frac{2Dh_n^{\rho}}{\sigma_0 c_{1,0}}\biggr\}.
\end{align*}
Consequently, Lemma \ref{Pine} shows
\begin{align*}
    &\quad\ {\bf{P}}\biggl(|\Delta_{i}^n X|\leq Dh_n^{\rho}, J_{i,1}^n=1, J_{i,2}^n=0, J_{i,3}^n=0, J_{i,4}^n=0, |\Delta Z_{1,\tau_i^n}|> \frac{2Dh_n^{\rho}}{\sigma_0 c_{1,0}}\Big|\mathcal{F}_{i-1}^n\biggr)\\
    &\leq {\bf{P}}\biggl(\sup_{t\in[\eta_{i}^n,t_i^n)}|X_{t_i^n}-X_{t}|+\sup_{t\in [t_{i-1}^n,\tau_i^n)}|X_t-X_{t_{i-1}^n}|>Dh_n^{\rho}\Big|\mathcal{F}_{i-1}^n\biggr)\\
    &\leq {\bf{P}}\biggl(\sup_{t\in[\eta_{i}^n,t_i^n)}|X_{t_i^n}-X_{t}|>\frac{Dh_n^{\rho}}{2}\Big|\mathcal{F}_{i-1}^n\biggr)+{\bf{P}}\biggl(
    \sup_{t\in [t_{i-1}^n,\tau_i^n)}|X_t-X_{t_{i-1}^n}|>\frac{Dh_n^{\rho}}{2}\Big|\mathcal{F}_{i-1}^n\biggr)\\
    &=R_{i-1}(h_n^{p},\xi,\delta,\varepsilon,\zeta)
\end{align*}
for all $p\geq 1$. Since $h_n^{\rho}\longrightarrow 0$ as $n\longrightarrow\infty$, we note that
\begin{align*}
    \frac{2Dh_n^{\rho}}{\sigma_0 c_{1,0}}\leq r_1
\end{align*}
for a sufficiently large $n$, where $r_{1}$ is the positive constant in ${\bf{[A3]}}$. It follows that 
\begin{align*}
   {\bf{P}}\biggl(|\Delta Z_{1,\tau_i^n}|\leq \frac{2Dh_n^{\rho}}{\sigma_0 c_{1,0}}\Big|\mathcal{F}_{i-1}^n, J_{i,1}^n=1\biggr)
    &=\int_{\{|z_1|\leq 2\sigma_0^{-1}c_{1,0}^{-1}Dh_n^{\rho}\} \backslash\{0\}}F_1(z_1)dz_1\\
    &=\frac{1}{\lambda_{1,0}}\int_{\{|z_1|\leq 2\sigma_0^{-1}c_{1,0}^{-1}Dh_n^{\rho}\}\backslash\{0\}}f_1(z_1){\bf{1}}_{\{|z_1|\leq r_1\}}dz_1\\
    &\leq \frac{K_1}{\lambda_{1,0}}\int_{\{|z_1|\leq 2\sigma_0^{-1}c_{1,0}^{-1}Dh_n^{\rho}\}\backslash\{0\}}|z_1|^{1-k_1}dz_1\\
    &\leq \frac{Ch_n^{\rho}}{\lambda_{1,0}}
\end{align*}
for a sufficiently large $n$, so that
\begin{align*}
    &\quad\ {\bf{P}}\biggl(|\Delta_{i}^n X|\leq Dh_n^{\rho}, J_{i,1}^n=1, J_{i,2}^n=0, J_{i,3}^n=0, J_{i,4}^n=0, |\Delta Z_{1,\tau_i^n}|\leq \frac{2Dh_n^{\rho}}{\sigma_0 c_{1,0}}\Big|\mathcal{F}_{i-1}^n\biggr)\\
    &\leq {\bf{P}}\biggl(J_{i,1}^n=1, |\Delta Z_{1,\tau_i^n}|\leq \frac{2Dh_n^{\rho}}{\sigma_0 c_{1,0}}\Big|\mathcal{F}_{i-1}^n\biggr)\\
    &={\bf{P}}\biggl(|\Delta Z_{1,\tau_i^n}|\leq \frac{2Dh_n^{\rho}}{\sigma_0 c_{1,0}}\Big|\mathcal{F}_{i-1}^n, J_{i,1}^n=1\biggr){\bf{P}}\Bigl(J_{i,1}^n=1\Bigr)\leq C h_n^{\rho+1}.
\end{align*}
Therefore, since $p$ is arbitrary, we obtain
\begin{align*}
    {\bf{P}}\Bigl(C^{n}_{i,1,0,0,0}\big|\mathcal{F}_{i-1}^n\Bigr)=R_{i-1}(h_n^{\rho+1},\xi,\delta,\varepsilon,\zeta)
\end{align*}
for a sufficiently large $n$. Moreover, for a sufficiently large $n$, one gets
\begin{align*}
    {\bf{P}}\Bigl(D^{n}_{i,1,0,0,0}\big|\mathcal{F}_{i-1}^n\Bigr)
    &={\bf{P}}\Bigl(|\Delta_{i}^n X|> Dh_n^{\rho}, J_{i,1}^n=1, J_{i,2}^n=0, J_{i,3}^n=0, J_{i,4}^n=0\big|\mathcal{F}_{i-1}^n\Bigr)\\
    &={\bf{P}}\Bigl(J_{i,1}^n=1, J_{i,2}^n=0, J_{i,3}^n=0, J_{i,4}^n=0\big|\mathcal{F}_{i-1}^n\Bigr)\\
    &\qquad\quad-{\bf{P}}\Bigl(|\Delta_{i}^n X|\leq Dh_n^{\rho}, J_{i,1}^n=1, J_{i,2}^n=0, J_{i,3}^n=0, J_{i,4}^n=0\big|\mathcal{F}_{i-1}^n\Bigr)\\
    &=e^{-\lambda_0 h_n}\lambda_{1,0}h_n-R_{i-1}(h_n^{\rho+1},\xi,\delta,\varepsilon,\zeta)\\
    &=\lambda_{1,0}h_n\tilde{R}_{i-1}(h_n^{\rho},\xi,\delta,\varepsilon,\zeta),
\end{align*}
which yields (\ref{DK11}). Similarly, (\ref{DK12})-(\ref{DK14}) can be shown.
\end{proof}
\begin{proposition}\label{Nprop}
Under {\rm{\textbf{[A1]}}}, {\rm{\textbf{[A3]}}} and {\rm{\textbf{[A4]}}}, as $n\longrightarrow\infty$,
\begin{align}
    \frac{\tilde{N}_n}{n}\overset{p}{\longrightarrow}1 \label{Nprob}
\end{align}
and
\begin{align}
    \sqrt{n}\Bigl(\frac{\tilde{N}_n}{n}-1\Bigr)\overset{p}{\longrightarrow}0
    \label{Nasym} 
\end{align}
under ${\bf{P}}$.
\end{proposition}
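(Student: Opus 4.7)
The plan is to show $n - N_n = O_p(1)$ by a direct moment bound using Lemma \ref{Dlemma}, and then derive both statements from this. Since $T = nh_n$ is fixed, this $O_p(1)$ control corresponds to the expected number of jump times of the driving Poisson measures in $[0,T]$, which is finite.

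First, I would write
\begin{align*}
    n - N_n = \sum_{i=1}^n \mathbf{1}_{\{|\Delta_i^n X| > Dh_n^{\rho}\}}
    = \sum_{i=1}^n \sum_{(k_1,k_2,k_3,k_4) \in \{0,1,2\}^4} \mathbf{1}_{D^n_{i,k_1,k_2,k_3,k_4}},
\end{align*}
and take conditional expectations with respect to $\mathcal{F}_{i-1}^n$. Applying Lemma \ref{Dlemma}, the $D^n_{i,0,0,0,0}$ contribution yields a negligible $R_{i-1}(h_n^p,\xi,\delta,\varepsilon,\zeta)$ term (with $p$ large); each of the four single-jump events $D^n_{i,1,0,0,0},\ldots,D^n_{i,0,0,0,1}$ contributes $\lambda_{j,0}h_n\tilde{R}_{i-1}(h_n^{\rho},\xi,\delta,\varepsilon,\zeta)$; and the multi-jump cases in $K_2, K_3, K_4$ contribute $O(h_n^2)$. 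Using the moment bound (\ref{moment}), summing over $i=1,\ldots,n$, and recalling $nh_n = T$ and $nh_n^2 = Th_n \to 0$, one obtains
\begin{align*}
    \mathbf{E}[n - N_n] \leq \lambda_0 T + o(1) = O(1).
\end{align*}

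Next, by Markov's inequality, $n - N_n = O_p(1)$, which immediately gives $N_n/n = 1 - (n-N_n)/n \overset{p}{\longrightarrow} 1$ and in particular $\mathbf{P}(N_n = 0) \to 0$ (since $\mathbf{P}(N_n = 0) \leq \mathbf{P}(n - N_n \geq n)$). On the event $\{N_n \neq 0\}$ we have $\tilde{N}_n = N_n$, while on $\{N_n = 0\}$ we have $\tilde{N}_n = n$ so $\tilde{N}_n/n - 1 = 0$. Therefore
\begin{align*}
    \frac{\tilde{N}_n}{n} - 1 = \Bigl(\frac{N_n}{n}-1\Bigr)\mathbf{1}_{\{N_n \neq 0\}}
    = -\frac{n - N_n}{n}\mathbf{1}_{\{N_n \neq 0\}},
\end{align*}
and (\ref{Nprob}) follows.

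For (\ref{Nasym}), multiply the identity above by $\sqrt{n}$ to get
\begin{align*}
    \sqrt{n}\Bigl(\frac{\tilde{N}_n}{n}-1\Bigr) = -\frac{n - N_n}{\sqrt{n}}\mathbf{1}_{\{N_n \neq 0\}},
\end{align*}
and since $n - N_n = O_p(1)$, the right-hand side converges to $0$ in probability. The only subtlety is verifying that the conditional-expectation bounds from Lemma \ref{Dlemma} can be turned into an unconditional bound of order $O(1)$; this is the main technical point but is handled by (\ref{moment}) once the decomposition is in place. Everything else is bookkeeping.
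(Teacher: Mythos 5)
Your proposal is correct, and it takes a genuinely different route from the paper. The paper works with the complementary events via Lemma \ref{Clemma}, shows $\frac{1}{n}\sum_{i=1}^n{\bf{P}}(|\Delta_i^n X|\leq Dh_n^{\rho}|\mathcal{F}_{i-1}^n)\rightarrow 1$ in probability, and then invokes Lemma 9 of Genon-Catalot and Jacod \cite{Genon(1993)} twice -- once for the law of large numbers for $N_n/n$ and once more, with the conditional means scaled by $\sqrt{n}$ and the conditional variances shown to vanish, for (\ref{Nasym}) -- before handling $\tilde{N}_n$ versus $N_n$ through ${\bf{P}}(N_n=0)\rightarrow 0$ exactly as you do. You instead bound the first moment of $n-N_n$ directly: decomposing $\{|\Delta_i^n X|>Dh_n^{\rho}\}$ into the disjoint events $D^n_{i,k_1,k_2,k_3,k_4}$, applying Lemma \ref{Dlemma}, and integrating the $R_{i-1}$ remainders via (\ref{moment}) gives ${\bf{E}}[n-N_n]\leq \lambda_0 T+o(1)$, whence $n-N_n=O_p(1)$ by Markov's inequality; both (\ref{Nprob}) and (\ref{Nasym}) then follow at once from the identity $\tilde{N}_n/n-1=(N_n/n-1){\bf{1}}_{\{N_n\neq 0\}}$. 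This is more elementary (no martingale triangular-array lemma) and actually stronger, since $n-N_n=O_p(1)$ yields the rate $\sqrt{n}(\tilde{N}_n/n-1)=O_p(n^{-1/2})$ rather than mere convergence to zero, while the paper's route has the advantage of reusing the same conditional-expectation machinery employed throughout the rest of Section \ref{Proof}. Your only bookkeeping omission is the tuple $(1,1,1,1)$, which lies outside $K_1$--$K_4$ but is covered by (\ref{D1}) with bound $O(h_n^4)$, hence negligible; also note that the conditional estimates of Lemma \ref{Dlemma} hold only for sufficiently large $n$, which suffices for the asymptotic claims, and that your argument uses precisely the stated assumptions, since (\ref{moment}) needs only {\bf{[A1]}} and {\bf{[A3]}}.
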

\begin{proof}
First, we show (\ref{Nprob}). Lemma \ref{Clemma} implies
\begin{align*}
    {\bf{P}}\Bigl(|\Delta_{i}^n X|\leq Dh_n^{\rho}\big|\mathcal{F}_{i-1}^n\Bigr)&={\bf{P}}\biggl(\bigcup_{k_1,k_2,k_3,k_4=0,1,2}C^{n}_{i,k_1,k_2,k_3,k_4}\Big|\mathcal{F}_{i-1}^n\biggr)\\
    &=\sum_{k_1,k_2,k_3,k_4=0,1,2}{\bf{P}}\Bigl(C^{n}_{i,k_1,k_2,k_3,k_4}\big|\mathcal{F}_{i-1}^n\Bigr)=\tilde{R}_{i-1}(h_n,\xi,\delta,\varepsilon,\zeta)
\end{align*}
for a sufficiently large $n$. In other words, there exists $N_1\in\mathbb{N}$ such that
\begin{align*}
    {\bf{P}}\Bigl(|\Delta_{i}^n X|\leq Dh_n^{\rho}\big|\mathcal{F}_{i-1}^n\Bigr)=\tilde{R}_{i-1}(h_n,\xi,\delta,\varepsilon,\zeta)
\end{align*}
for any $n\geq N_1$. Fix $\varepsilon>0$ arbitrarily. Moreover, we have
\begin{align*}
    \frac{1}{n}\sum_{i=1}^n \tilde{R}_{i-1}(h_n,\xi,\delta,\varepsilon,\zeta)=1-\frac{h_n}{n}\sum_{i=1}^n R_{i-1}(1,\xi,\delta,\varepsilon,\zeta)\stackrel{p}{\longrightarrow}1,
\end{align*}
so that there exists $N_2\in\mathbb{N}$ such that
\begin{align*}
    {\bf{P}}\biggl(\biggl|\frac{1}{n}\sum_{i=1}^n \tilde{R}_{i-1}(h_n,\xi,\delta,\varepsilon,\zeta)-1\biggr|>\varepsilon\biggr)<\varepsilon
\end{align*}
for all $n\geq N_2$. Thus, for any $n\geq\max{\{N_1,N_2\}}$, one gets
\begin{align*}
    &\quad\ {\bf{P}}\biggl(\biggl|\frac{1}{n}\sum_{i=1}^n{\bf{P}}\Bigl(|\Delta_{i}^n X|\leq Dh_n^{\rho}\big|\mathcal{F}_{i-1}^n\Bigr)-1\biggr|>\varepsilon\biggr)\\
    &={\bf{P}}\biggl(\biggl|\frac{1}{n}\sum_{i=1}^n \tilde{R}_{i-1}(h_n,\xi,\delta,\varepsilon,\zeta)-1\biggr|>\varepsilon\biggr)<\varepsilon,
\end{align*}
which yields
\begin{align}
    \frac{1}{n}\sum_{i=1}^n{\bf{P}}\Bigl(|\Delta_{i}^n X|\leq Dh_n^{\rho}\big|\mathcal{F}_{i-1}^n\Bigr)\stackrel{p}{\longrightarrow}1.  \label{Pprob}
\end{align}
Since 
\begin{align*}
    {\bf{E}}\biggl[\frac{1}{n}\sum_{i=1}^n{\bf{1}}_{\{|\Delta_{i}^n X|\leq Dh_n^{\rho}\}}\Big|\mathcal{F}_{i-1}^n\biggr]
    &=\frac{1}{n}\sum_{i=1}^n{\bf{P}}\Bigl(|\Delta_{i}^n X|\leq Dh_n^{\rho}\big|\mathcal{F}_{i-1}^n\Bigr)\stackrel{p}{\longrightarrow}1
\end{align*}
and
\begin{align*}
     {\bf{E}}\biggl[\frac{1}{n^2}\sum_{i=1}^n{\bf{1}}^2_{\{|\Delta_{i}^n X|\leq Dh_n^{\rho}\}}\Big|\mathcal{F}_{i-1}^n\biggr]
     &=\frac{1}{n}\times \frac{1}{n}\sum_{i=1}^n{\bf{P}}\Bigl(|\Delta_{i}^n X|\leq Dh_n^{\rho}\big|\mathcal{F}_{i-1}^n\Bigr)\stackrel{p}{\longrightarrow}0,
\end{align*}
it follows from Lemma 9 in Genon-Catalot and Jacod \cite{Genon(1993)} that
\begin{align*}
    \frac{N_n}{n}=\frac{1}{n}\sum_{i=1}^n{\bf{1}}_{\{|\Delta_{i}^n X|\leq Dh_n^{\rho}\}}\stackrel{p}{\longrightarrow}1.
\end{align*}
Consequently, we see 
\begin{align*}
    {\bf{P}}\Bigl(N_n=0\Bigr)&={\bf{P}}\biggl(\frac{N_n}{n}=0\biggr)\leq {\bf{P}}\biggl(\Bigl|\frac{N_n}{n}-1\Bigr|>\frac{1}{2}\biggr)\longrightarrow 0, 
\end{align*}
so that we get
\begin{align}
\begin{split}
    {\bf{P}}\biggl(\Bigl|\frac{\tilde{N}_n}{n}-1\Bigr|>\varepsilon\biggr)&={\bf{P}}\biggl(\Bigl\{\Bigl|\frac{\tilde{N}_n}{n}-1\Bigr|>\varepsilon\Bigr\}\cap \Bigl\{N_n\neq 0\Bigr\} \biggr) \\
    &\qquad\quad+{\bf{P}}\biggl(\Bigl\{\Bigl|\frac{\tilde{N}_n}{n}-1\Bigr|>\varepsilon\Bigr\}\cap \Bigl\{N_n= 0\Bigr\} \biggr)\\
    &\leq {\bf{P}}\biggl(\Bigl|\frac{N_n}{n}-1\Bigr|>\varepsilon\biggr)+{\bf{P}}\Bigl(N_n=0\Bigr)\longrightarrow 0, \label{Nnprob}
\end{split}
\end{align}
which implies (\ref{Nprob}). Next, we prove (\ref{Nasym}). Since
\begin{align*}
    \frac{1}{\sqrt{n}}\sum_{i=1}^n R_{i-1}(h_n,\xi,\delta,\varepsilon,\zeta)=\sqrt{nh_n^2}\times \frac{1}{n}\sum_{i=1}^n R_{i-1}(1,\xi,\delta,\varepsilon,\zeta)\stackrel{p}{\longrightarrow}0,
\end{align*}
it is shown that
\begin{align*}
    \frac{1}{\sqrt{n}}\sum_{i=1}^n\Bigl\{{\bf{P}}\Bigl(|\Delta_{i}^n X|\leq Dh_n^{\rho}\big|\mathcal{F}_{i-1}^n\Bigr)-1\Bigr\}\stackrel{p}{\longrightarrow}0
\end{align*}
in an analogous manner to the proof of (\ref{Pprob}). Thus, it follows from (\ref{Pprob}) that 
\begin{align*}
    {\bf{E}}\biggl[\frac{1}{\sqrt{n}}\sum_{i=1}^n\bigl({\bf{1}}_{\{|\Delta_{i}^n X|\leq Dh_n^{\rho}\}}-1\bigr)\Big|\mathcal{F}_{i-1}^n\biggr]
    &=\frac{1}{\sqrt{n}}\sum_{i=1}^n\Bigl\{{\bf{P}}\Bigl(|\Delta_{i}^n X|\leq Dh_n^{\rho}\big|\mathcal{F}_{i-1}^n\Bigr)-1\Bigr\}\stackrel{p}{\longrightarrow}0
\end{align*}
and
\begin{align*}
    {\bf{E}}\biggl[\frac{1}{n}\sum_{i=1}^n\bigl({\bf{1}}_{\{|\Delta_{i}^n X|\leq Dh_n^{\rho}\}}-1\bigr)^2\Big|\mathcal{F}_{i-1}^n\biggr]
    &=\frac{1}{n}\sum_{i=1}^n{\bf{E}}\Bigl[{\bf{1}}_{\{|\Delta_{i}^n X|\leq Dh_n^{\rho}\}}\big|\mathcal{F}_{i-1}^n\Bigr]\\
    &\qquad\quad-\frac{2}{n}\sum_{i=1}^n{\bf{E}}\Bigl[{\bf{1}}_{\{|\Delta_{i}^n X|\leq Dh_n^{\rho}\}}\big|\mathcal{F}_{i-1}^n\Bigr]+1\\
    &=-\frac{1}{n}\sum_{i=1}^n{\bf{P}}\Bigl(|\Delta_{i}^n X|\leq Dh_n^{\rho}\big|\mathcal{F}_{i-1}^n\Bigr)+1\stackrel{p}{\longrightarrow}0,
\end{align*}
so that it holds from Lemma 9 in Genon-Catalot and Jacod \cite{Genon(1993)} that
\begin{align*}
    \sqrt{n}\Bigl(\frac{N_n}{n}-1\Bigr)=\frac{1}{\sqrt{n}}\sum_{i=1}^n\bigl({\bf{1}}_{\{|\Delta_{i}^n X|\leq Dh_n^{\rho}\}}-1\bigr)\overset{p}{\longrightarrow}0.
\end{align*}
In a similar way to (\ref{Nnprob}), we complete the proof of  (\ref{Nasym}).
\end{proof}
\begin{proposition}\label{X1momentprop}
Under {\rm{\textbf{[A1]}}}-{\rm{\textbf{[A4]}}}, for a sufficiently large $n$, 
\begin{align}
    \begin{split}
    {\bf{E}}\Bigl[(\Delta_i^n X_1)^{(j_1)}(\Delta_i^n X_1)^{(j_2)}{\bf{1}}_{\{|\Delta_i^n X|\leq Dh_n^{\rho}\}}\big|\mathcal{F}_{i-1}^n\Bigr]=h_n ({\bf{\Sigma}}^{11}_0)_{j_1j_2}
    + R_{i-1}(h_n^{2},\xi,\delta,\varepsilon,\zeta)
    \end{split}\label{EX1X1}
\end{align}
and
\begin{align}
    \begin{split}
    &\quad\ {\bf{E}}\Bigl[(\Delta_i^n X_1)^{(j_1)}(\Delta_i^n X_1)^{(j_2)}(\Delta_i^n X_1)^{(j_3)}(\Delta_i^n X_1)^{(j_4)}{\bf{1}}_{\{|\Delta_i^n X|\leq Dh_n^{\rho}\}}\big|\mathcal{F}_{i-1}^n\Bigr]\\
    &=h_n^2\bigl\{({\bf{\Sigma}}^{11}_0)_{j_1j_2}({\bf{\Sigma}}^{11}_0)_{j_3j_4}+({\bf{\Sigma}}^{11}_0)_{j_1j_3}({\bf{\Sigma}}^{11}_0)_{j_2j_4}+({\bf{\Sigma}}^{11}_0)_{j_1j_4}({\bf{\Sigma}}^{11}_0)_{j_2j_3}\bigr\}\\
    &\qquad\qquad\qquad\qquad\qquad\qquad\qquad\qquad+R_{i-1}(h_n^{5\rho+1},\xi,\delta,\varepsilon,\zeta)+
    R_{i-1}(h_n^3,\xi,\delta,\varepsilon,\zeta)
    \end{split}\label{EX1X1X1X1}
\end{align}
for $j_1,j_2,j_3,j_4=1,\ldots,p_1$.
\end{proposition}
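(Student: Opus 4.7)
The plan is to split the indicator ${\bf{1}}_{\{|\Delta_i^n X|\leq Dh_n^{\rho}\}}$ along the partition $\bigcup_{(k_1,k_2,k_3,k_4)\in\{0,1,2\}^4}C^n_{i,k_1,k_2,k_3,k_4}$ introduced before Lemma \ref{Clemma} and to isolate the no-jump cell $C^n_{i,0,0,0,0}$. On $C^n_{i,0,0,0,0}$ neither $\xi$ nor $\delta$ jumps in $(t_{i-1}^n,t_i^n]$, so $\Delta_i^n X_1$ equals the increment $\Delta_i^n\widetilde X_1^{(i)}$ of the continuous companion $\widetilde X_1^{(i)}={\bf{\Lambda}}_{1,0}\widetilde\xi^{(i)}+\widetilde\delta^{(i)}$, where $\widetilde\xi^{(i)}$ and $\widetilde\delta^{(i)}$ solve (\ref{xi}) and (\ref{delta}) with the Poisson terms removed and with initial values $\xi_{t_{i-1}^n}$ and $\delta_{t_{i-1}^n}$. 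Combining Lemma \ref{Dlemma} (which gives ${\bf{P}}(D^n_{i,0,0,0,0}|\mathcal{F}_{i-1}^n)=R_{i-1}(h_n^p,\xi,\delta,\varepsilon,\zeta)$ for every $p\geq 1$), H\"older's inequality, and the polynomial moment bounds (\ref{moment}), I can replace ${\bf{1}}_{C^n_{i,0,0,0,0}}$ by ${\bf{1}}_{A_i}$, where $A_i=\{J_{i,j}^n=0,\,j=1,\ldots,4\}$ is the full no-jump event, at the cost of an error of arbitrarily high order in $h_n$.

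Next, conditionally on $\mathcal{F}_{i-1}^n$ the continuous increment $\Delta_i^n\widetilde X_1^{(i)}$ depends only on the Wiener increments on $(t_{i-1}^n,t_i^n]$ while $A_i$ depends only on the Poisson measures on the same interval, so the two are independent and ${\bf{E}}[\,\cdot\,{\bf{1}}_{A_i}|\mathcal{F}_{i-1}^n]=e^{-\lambda_0 h_n}{\bf{E}}[\,\cdot\,|\mathcal{F}_{i-1}^n]$. A standard It\^o--Taylor expansion for the diffusion increment yields
\begin{align*}
{\bf{E}}\bigl[(\Delta_i^n\widetilde X_1^{(i)})(\Delta_i^n\widetilde X_1^{(i)})^{\top}\bigm|\mathcal{F}_{i-1}^n\bigr]=h_n{\bf{\Sigma}}_0^{11}+R_{i-1}(h_n^2,\xi,\delta),
\end{align*}
and an analogous expansion of the four-fold product together with the Isserlis identity for centered Gaussian increments reproduces the three-pairing tensor in (\ref{EX1X1X1X1}) at order $h_n^2$, with drift and higher-order It\^o remainders of order $R_{i-1}(h_n^3,\xi,\delta)$. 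Multiplication by $e^{-\lambda_0 h_n}=1-R_{i-1}(h_n)$ preserves both remainders.

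For the residual cells the threshold gives $|\Delta_i^n X_1|^{\ell}\leq(Dh_n^{\rho})^{\ell}$ on $\{|\Delta_i^n X|\leq Dh_n^{\rho}\}$ for $\ell=2,4$, so combining with Lemma \ref{Clemma}: the $K_1$ cells contribute $O(h_n^{2\rho})\cdot R_{i-1}(h_n^{\rho+1},\cdot)=R_{i-1}(h_n^{3\rho+1},\cdot)$ for the second moment and $R_{i-1}(h_n^{5\rho+1},\cdot)$ for the fourth; the cells in $K_2\cup K_3\cup K_4\cup\{(1,1,1,1)\}$ are governed by probabilities of order $h_n^2$ or smaller, so their contributions are at most $O(h_n^{2\rho+2})$ and $O(h_n^{4\rho+2})$ respectively. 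The assumption $\rho\in[1/3,1/2)$ forces $3\rho+1\geq 2$ and $2\rho+2\geq 8/3>2$, so every residual for the second moment is absorbed into $R_{i-1}(h_n^2,\xi,\delta,\varepsilon,\zeta)$ in (\ref{EX1X1}); for the fourth moment $4\rho+2\geq 10/3>3$ collapses into $R_{i-1}(h_n^3,\cdot)$, but the $K_1$ term $R_{i-1}(h_n^{5\rho+1},\cdot)$ must be kept separate because $5\rho+1$ can be strictly less than $3$ at $\rho$ close to $1/3$, which matches the two remainder terms in (\ref{EX1X1X1X1}).

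The main obstacle is the fourth-moment It\^o--Taylor bookkeeping: one needs to expand $a_1$ and $a_2$ far enough that the leading $h_n^2$ contribution is isolated exactly as the three Isserlis pairings of ${\bf{\Sigma}}_0^{11}$, and verify that every mixed drift--Brownian contraction is absorbed into the $R_{i-1}(h_n^3,\xi,\delta)$ remainder, using $a_i\in C^4_{\uparrow}$ from {\bf{[A2]}} and the moment bounds (\ref{moment}). The remainder of the argument is a careful accounting of the negligibility of the residual cells together with the substitution of the continuous companion process on the no-jump event.
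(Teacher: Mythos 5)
Your proposal is correct and takes essentially the same route as the paper: split over the jump-count cells $C^n_{i,k_1,k_2,k_3,k_4}$, bound the residual cells by combining the threshold estimate $|\Delta_i^n X_1|^{\ell}\leq (Dh_n^{\rho})^{\ell}$ with Lemmas \ref{Clemma}--\ref{Dlemma}, pass from $C^n_{i,0,0,0,0}$ to the full no-jump event using the negligibility of $D^n_{i,0,0,0,0}$ and Cauchy--Schwarz/H\"older, and evaluate the no-jump contribution through a continuous companion diffusion independent of the Poisson counts, multiplying by $e^{-\lambda_0 h_n}$ (the paper cites Lemma 21 of Kusano and Uchida (2023) for the Gaussian-type moment expansion you re-derive via It\^o--Taylor and Isserlis). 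The only cosmetic differences are that you restart the companion at $(\xi_{t_{i-1}^n},\delta_{t_{i-1}^n})$ whereas the paper uses a globally defined continuous copy, and your remainder bookkeeping ($R_{i-1}(h_n^{5\rho+1})$ kept separate from $R_{i-1}(h_n^{3})$, and $3\rho+1\geq 2$ for the second moment) matches the paper exactly.
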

\begin{proof}
We only prove (\ref{EX1X1X1X1}). By noting that $3\rho+1\geq 2$, in an analogous manner, (\ref{EX1X1}) can be shown. 
First, we define the stochastic processes $\{\xi^c_t\}_{t\geq 0}$ and $\{\delta^c_t\}_{t\geq 0}$ by the following diffusion processes
\begin{align*}
    d\xi^c_{t}&=a_{1}(\xi^c_{t})dt+{\bf{S}}_{1,0}d W_{1,t},\quad \xi_0^c=x_{1,0},\\
    d\delta^c_{t}&=a_{2}(\delta^c_{t})dt+{\bf{S}}_{2,0}d W_{2,t},\quad \delta_0^c=x_{2,0}
\end{align*}
independent of $J_{i,j}^n$. Let $X^c_{1,t}={\bf{\Lambda}}_{1,0}\xi_t^{c}+\delta_t^c$. In a similar way to Lemma 21 in Kusano and Uchida \cite{Kusano(2023)}, it is shown that
\begin{align}
\begin{split}
    &\quad\ {\bf{E}}\Bigl[(\Delta_i^n X^c_1)^{(j_1)}(\Delta_i^n X^c_1)^{(j_2)}(\Delta_i^n X^c_1)^{(j_3)}(\Delta_i^n X^c_1)^{(j_4)}\big|\mathcal{F}_{i-1}^n\Bigr]\\
    &=h_n^2 \bigl\{({\bf{\Sigma}}^{11}_{0})_{j_1j_2}({\bf{\Sigma}}^{11}_{0})_{j_3j_4}+({\bf{\Sigma}}^{11}_{0})_{j_1j_3}({\bf{\Sigma}}^{11}_{0})_{j_2j_4}+({\bf{\Sigma}}^{11}_{0})_{j_1j_4}({\bf{\Sigma}}^{11}_{0})_{j_2j_3}\bigr\}+
    R_{i-1}(h_n^3,\xi,\delta),
\end{split}\label{EX1X1X1X1c}
\end{align}
where 
\begin{align*}
    R_{i-1}(h_n^3,\xi,\delta)=R_{i-1}(h_n^3,\xi)+R_{i-1}(h_n^3,\delta).
\end{align*}
Since
\begin{align*}
    \bigl\{J_{i,1}^n=0, J_{i,2}^n=0, J_{i,3}^n=0, J_{i,4}^n=0\bigr\}=C^{n}_{i,0,0,0,0}\cup
    D^{n}_{i,0,0,0,0}
\end{align*}
and $C^{n}_{i,0,0,0,0}\cap D^{n}_{i,0,0,0,0}=\phi$, we have
\begin{align*}
   {\bf{1}}_{\{J_{i,1}^n=0, J_{i,2}^n=0, J_{i,3}^n=0, J_{i,4}^n=0\}}
    &={\bf{1}}_{C^{n}_{i,0,0,0,0}}+{\bf{1}}_{D^{n}_{i,0,0,0,0}},
\end{align*}
which implies that
\begin{align}
\begin{split}
    &\quad\ {\bf{E}}\Bigl[(\Delta_i^n X_1)^{(j_1)}(\Delta_i^n X_1)^{(j_2)}(\Delta_i^n X_1)^{(j_3)}(\Delta_i^n X_1)^{(j_4)}{\bf{1}}_{C^{n}_{i,0,0,0,0}}\big|\mathcal{F}_{i-1}^n\Bigr]\\
    &={\bf{E}}\Bigl[(\Delta_i^n X_1)^{(j_1)}(\Delta_i^n X_1)^{(j_2)}(\Delta_i^n X_1)^{(j_3)}(\Delta_i^n X_1)^{(j_4)}{\bf{1}}_{\{J_{i,1}^n=0, J_{i,2}^n=0, J_{i,3}^n=0, J_{i,4}^n=0\}}\big|\mathcal{F}_{i-1}^n\Bigr]\\
    &\qquad\qquad-{\bf{E}}\Bigl[(\Delta_i^n X_1)^{(j_1)}(\Delta_i^n X_1)^{(j_2)}(\Delta_i^n X_1)^{(j_3)}(\Delta_i^n X_1)^{(j_4)}{\bf{1}}_{D^{n}_{i,0,0,0,0}}\big|\mathcal{F}_{i-1}^n\Bigr].
\end{split}\label{ECJD}
\end{align}
The independence of $\{\xi_t\}_{t\geq 0}$, $\{\delta_t\}_{t\geq 0}$, $\{\varepsilon_t\}_{t\geq 0}$ and $\{\zeta_t\}_{t\geq 0}$ yields 
\begin{align*}
    &\quad\ {\bf{P}}\Bigl(J_{i,1}^n=0, J_{i,2}^n=0, J_{i,3}^n=0, J_{i,4}^n=0
    \big|\mathcal{F}_{i-1}^n\Bigr)\\
    &={\bf{P}}\Bigl(J_{i,1}^n=0 \big|\mathcal{F}_{i-1}^n\Bigr){\bf{P}}\Bigl(J_{i,2}^n=0 \big|\mathcal{F}_{i-1}^n\Bigr){\bf{P}}\Bigl(J_{i,3}^n=0 \big|\mathcal{F}_{i-1}^n\Bigr){\bf{P}}\Bigl(J_{i,4}^n=0 \big|\mathcal{F}_{i-1}^n\Bigr)=e^{-\lambda_{0}h_n},
\end{align*}
so that it holds from (\ref{EX1X1X1X1c}) that
\begin{align*}
    &\quad\ {\bf{E}}\Bigl[(\Delta_i^n X_1)^{(j_1)}(\Delta_i^n X_1)^{(j_2)}(\Delta_i^n X_1)^{(j_3)}(\Delta_i^n X_1)^{(j_4)}{\bf{1}}_{\{J_{i,1}^n=0, J_{i,2}^n=0, J_{i,3}^n=0, J_{i,4}^n=0\}}\big|\mathcal{F}_{i-1}^n\Bigr]\\
    &={\bf{E}}\Bigl[(\Delta_i^n X^c_1)^{(j_1)}(\Delta_i^n X^c_1)^{(j_2)}(\Delta_i^n X^c_1)^{(j_3)}(\Delta_i^n X^c_1)^{(j_4)}{\bf{1}}_{\{J_{i,1}^n=0, J_{i,2}^n=0, J_{i,3}^n=0, J_{i,4}^n=0\}}\big|\mathcal{F}_{i-1}^n\Bigr]\\
    &={\bf{E}}\Bigl[(\Delta_i^n X^c_1)^{(j_1)}(\Delta_i^n X^c_1)^{(j_2)}(\Delta_i^n X^c_1)^{(j_3)}(\Delta_i^n X^c_1)^{(j_4)}\big|\mathcal{F}_{i-1}^n\Bigr]\\
    &\qquad\qquad\qquad\qquad\qquad\times{\bf{P}}\Bigl(J_{i,1}^n=0, J_{i,2}^n=0, J_{i,3}^n=0, J_{i,4}^n=0
    \big|\mathcal{F}_{i-1}^n\Bigr)\\
    &=h_n^2 \bigl\{({\bf{\Sigma}}^{11}_{0})_{j_1j_2}({\bf{\Sigma}}^{11}_{0})_{j_3j_4}+({\bf{\Sigma}}^{11}_{0})_{j_1j_3}({\bf{\Sigma}}^{11}_{0})_{j_2j_4}+({\bf{\Sigma}}^{11}_{0})_{j_1j_4}({\bf{\Sigma}}^{11}_{0})_{j_2j_3}\bigr\}\\
    &\quad\ +h_n^2\bigl(e^{-\lambda_{0} h_n}-1\bigr)\bigl\{({\bf{\Sigma}}^{11}_{0})_{j_1j_2}({\bf{\Sigma}}^{11}_{0})_{j_3j_4}+({\bf{\Sigma}}^{11}_{0})_{j_1j_3}({\bf{\Sigma}}^{11}_{0})_{j_2j_4}
    +({\bf{\Sigma}}^{11}_{0})_{j_1j_4}({\bf{\Sigma}}^{11}_{0})_{j_2j_3}\bigr\}\\
    &\qquad\qquad\qquad\qquad\qquad\quad+h_n^3 R_{i-1}(1,\xi,\delta)\\
    &=h_n^2 \bigl\{({\bf{\Sigma}}^{11}_{0})_{j_1j_2}({\bf{\Sigma}}^{11}_{0})_{j_3j_4}+({\bf{\Sigma}}^{11}_{0})_{j_1j_3}({\bf{\Sigma}}^{11}_{0})_{j_2j_4}+({\bf{\Sigma}}^{11}_{0})_{j_1j_4}({\bf{\Sigma}}^{11}_{0})_{j_2j_3}\bigr\}
    +R_{i-1}(h_n^3,\xi,\delta).
\end{align*}
On the other hand, the Cauchy-Schwartz inequality and Lemma \ref{Dlemma} yield 
\begin{align*}
    &\quad\ \Bigl|{\bf{E}}\Bigl[(\Delta_i^n X_1)^{(j_1)}(\Delta_i^n X_1)^{(j_2)}(\Delta_i^n X_1)^{(j_3)}(\Delta_i^n X_1)^{(j_4)}{\bf{1}}_{D^{n}_{i,0,0,0,0}}\big|\mathcal{F}_{i-1}^n\Bigr]\Bigr|\\
    &\leq {\bf{E}}\Bigl[\bigl|(\Delta_i^n X_1)^{(j_1)}(\Delta_i^n X_1)^{(j_2)}(\Delta_i^n X_1)^{(j_3)}(\Delta_i^n X_1)^{(j_4)}\bigr|^2\big|\mathcal{F}_{i-1}^n\Bigr]^{\frac{1}{2}}{\bf{P}}\Bigl(D^{n}_{i,0,0,0,0}\big|\mathcal{F}_{i-1}^n\Bigr)^{\frac{1}{2}}\\
    &\leq {\bf{E}}\Bigl[|\Delta_i^n X_{1}|^8\big|\mathcal{F}_{i-1}^n\Bigr]^{\frac{1}{2}}{\bf{P}}\Bigl(D^{n}_{i,0,0,0,0}\big|\mathcal{F}_{i-1}^n\Bigr)^{\frac{1}{2}}\\
    &\leq \Bigl\{C{\bf{E}}\Bigl[|\Delta_{i}^n \xi|^8\big|\mathcal{F}_{i-1}^n\Bigr]^{\frac{1}{2}}+C{\bf{E}}\Bigl[|\Delta_{i}^n \delta|^8\big|\mathcal{F}_{i-1}^n\Bigr]^{\frac{1}{2}}\Bigr\}
    R_{i-1}(h_n^{\frac{p}{2}},\xi,\delta,\varepsilon,\zeta)\\
    &\leq R_{i-1}(h_n^{\frac{p}{2}+\frac{1}{2}},\xi,\delta,\varepsilon,\zeta)
\end{align*}
for any $p\geq 1$ since it is shown that
\begin{align*}
    {\bf{E}}\Bigl[\bigl|\Delta_{i}^n \xi\bigr|^l\big|\mathcal{F}_{i-1}^n\Bigr]=R_{i-1}(h_n,\xi),\quad 
    {\bf{E}}\Bigl[\bigl|\Delta_{i}^n \delta\bigr|^l\big|\mathcal{F}_{i-1}^n\Bigr]=R_{i-1}(h_n,\delta)
\end{align*}
for any $l\geq 2$ in an analogous manner to Proposition 3.1 in Shimizu and Yoshida \cite{Shimizu(2006)}.
Consequently, it follows from (\ref{ECJD}) that
\begin{align}
\begin{split}
    &\quad\ {\bf{E}}\Bigl[(\Delta_i^n X_1)^{(j_1)}(\Delta_i^n X_1)^{(j_2)}(\Delta_i^n X_1)^{(j_3)}(\Delta_i^n X_1)^{(j_4)}{\bf{1}}_{C^{n}_{i,0,0,0,0}}\big|\mathcal{F}_{i-1}^n\Bigr]\\
    &=h_n^2 \bigl\{({\bf{\Sigma}}^{11}_{0})_{j_1j_2}({\bf{\Sigma}}^{11}_{0})_{j_3j_4}+({\bf{\Sigma}}^{11}_{0})_{j_1j_3}({\bf{\Sigma}}^{11}_{0})_{j_2j_4}+({\bf{\Sigma}}^{11}_{0})_{j_1j_4}({\bf{\Sigma}}^{11}_{0})_{j_2j_3}\bigr\}
    +R_{i-1}(h_n^3,\xi,\delta,\varepsilon,\zeta).
\end{split}\label{C0000}
\end{align}
Recall that
\begin{align*}
    \bigl\{|\Delta_{i}^n X|\leq Dh_n^{\rho}\bigr\}=\bigcup_{k_1,k_2,k_3,k_4=0,1,2}C^{n}_{i,k_1,k_2,k_3,k_4}
\end{align*}
and $C^{n}_{i,k_1,k_2,k_3,k_4}$ are disjoint. Since
\begin{align*}
    {\bf{1}}_{\{|\Delta_i^n X|\leq Dh_n^{\rho}\}}=\sum_{k_1=0}^2 \sum_{k_2=0}^2 \sum_{k_3=0}^2 \sum_{k_4=0}^2{\bf{1}}_{C^{n}_{i,k_1,k_2,k_3,k_4}},
\end{align*}
we see
\begin{align*}
    &\quad\ {\bf{E}}\Bigl[(\Delta_i^n X_1)^{(j_1)}(\Delta_i^n X_1)^{(j_2)}(\Delta_i^n X_1)^{(j_3)}(\Delta_i^n X_1)^{(j_4)}{\bf{1}}_{\{|\Delta_i^n X|\leq Dh_n^{\rho}\}}\big|\mathcal{F}_{i-1}^n\Bigr]\\
    &={\bf{E}}\Bigl[(\Delta_i^n X_1)^{(j_1)}(\Delta_i^n X_1)^{(j_2)}(\Delta_i^n X_1)^{(j_3)}(\Delta_i^n X_1)^{(j_4)}{\bf{1}}_{C^{n}_{i,0,0,0,0}}\big|\mathcal{F}_{i-1}^n\Bigr]\\
    &\quad+{\bf{E}}\Bigl[(\Delta_i^n X_1)^{(j_1)}(\Delta_i^n X_1)^{(j_2)}(\Delta_i^n X_1)^{(j_3)}(\Delta_i^n X_1)^{(j_4)}{\bf{1}}_{C^{n}_{i,1,1,1,1}}\big|\mathcal{F}_{i-1}^n\Bigr]\\
    &\quad+\sum_{(k_1,k_2,k_3,k_4)\in K_1} {\bf{E}}\Bigl[(\Delta_i^n X_1)^{(j_1)}(\Delta_i^n X_1)^{(j_2)}(\Delta_i^n X_1)^{(j_3)}(\Delta_i^n X_1)^{(j_4)}{\bf{1}}_{C^{n}_{i,k_1,k_2,k_3,k_4}}\big|\mathcal{F}_{i-1}^n\Bigr]\\
    &\quad+\sum_{(k_1,k_2,k_3,k_4)\in K_2} {\bf{E}}\Bigl[(\Delta_i^n X_1)^{(j_1)}(\Delta_i^n X_1)^{(j_2)}(\Delta_i^n X_1)^{(j_3)}(\Delta_i^n X_1)^{(j_4)}{\bf{1}}_{C^{n}_{i,k_1,k_2,k_3,k_4}}\big|\mathcal{F}_{i-1}^n\Bigr]\\
    &\quad+\sum_{(k_1,k_2,k_3,k_4)\in K_3} {\bf{E}}\Bigl[(\Delta_i^n X_1)^{(j_1)}(\Delta_i^n X_1)^{(j_2)}(\Delta_i^n X_1)^{(j_3)}(\Delta_i^n X_1)^{(j_4)}{\bf{1}}_{C^{n}_{i,k_1,k_2,k_3,k_4}}\big|\mathcal{F}_{i-1}^n\Bigr]\\
    &\quad+\sum_{(k_1,k_2,k_3,k_4)\in K_4} {\bf{E}}\Bigl[(\Delta_i^n X_1)^{(j_1)}(\Delta_i^n X_1)^{(j_2)}(\Delta_i^n X_1)^{(j_3)}(\Delta_i^n X_1)^{(j_4)}{\bf{1}}_{C^{n}_{i,k_1,k_2,k_3,k_4}}\big|\mathcal{F}_{i-1}^n\Bigr].
\end{align*}
It holds from Lemma \ref{Clemma} that
\begin{align*}
    &\quad\ \Bigl|{\bf{E}}\Bigl[(\Delta_i^n X_1)^{(j_1)}(\Delta_i^n X_1)^{(j_2)}(\Delta_i^n X_1)^{(j_3)}(\Delta_i^n X_1)^{(j_4)}{\bf{1}}_{C^{n}_{i,1,1,1,1}}\big|\mathcal{F}_{i-1}^n\Bigr]\Bigr|\\
    &\leq {\bf{E}}\Bigl[\bigl|(\Delta_i^n X_1)^{(j_1)}\bigr|\bigl|(\Delta_i^n X_1)^{(j_2)}\bigr|\bigl|(\Delta_i^n X_1)^{(j_3)}\bigr|\bigl|(\Delta_i^n X_1)^{(j_4)}\bigr|{\bf{1}}_{C^{n}_{i,1,1,1,1}}\big|\mathcal{F}_{i-1}^n\Bigr]\\
    &\leq {\bf{E}}\Bigl[|\Delta_i^n X|^4{\bf{1}}_{C^{n}_{i,1,1,1,1}}\big|\mathcal{F}_{i-1}^n\Bigr]\\
    &\leq D^4h_n^{4\rho}{\bf{P}}\Bigl(C^{n}_{i,1,1,1,1}\big|\mathcal{F}_{i-1}^n\Bigr)\\
    &\leq D^4\lambda_{1,0}\lambda_{2,0}
    \lambda_{3,0}\lambda_{4,0}h_n^{4\rho+4},
\end{align*}
which implies
\begin{align*}
    {\bf{E}}\Bigl[(\Delta_i^n X_1)^{(j_1)}(\Delta_i^n X_1)^{(j_2)}(\Delta_i^n X_1)^{(j_3)}(\Delta_i^n X_1)^{(j_4)}{\bf{1}}_{C^{n}_{i,1,1,1,1}}\big|\mathcal{F}_{i-1}^n\Bigr]=R_{i-1}(h_n^{4\rho+4},\xi,\delta,\varepsilon,\zeta).
\end{align*}
Similarly, for a sufficiently large $n$, Lemma \ref{Clemma} implies that 
\begin{align*}
    {\bf{E}}\Bigl[(\Delta_i^n X_1)^{(j_1)}(\Delta_i^n X_1)^{(j_2)}(\Delta_i^n X_1)^{(j_3)}(\Delta_i^n X_1)^{(j_4)}{\bf{1}}_{C^{n}_{i,k_1,k_2,k_3,k_4}}\big|\mathcal{F}_{i-1}^n\Bigr]=R_{i-1}(h_n^{5\rho+1},\xi,\delta,\varepsilon,\zeta)
\end{align*}
for $(k_1,k_2,k_3,k_4)\in K_1$, 
\begin{align*}
    {\bf{E}}\Bigl[(\Delta_i^n X_1)^{(j_1)}(\Delta_i^n X_1)^{(j_2)}(\Delta_i^n X_1)^{(j_3)}(\Delta_i^n X_1)^{(j_4)}{\bf{1}}_{C^{n}_{i,k_1,k_2,k_3,k_4}}\big|\mathcal{F}_{i-1}^n\Bigr]=R_{i-1}(h_n^{4\rho+2},\xi,\delta,\varepsilon,\zeta)
\end{align*}
for $(k_1,k_2,k_3,k_4)\in K_2$, 
\begin{align*}
    {\bf{E}}\Bigl[(\Delta_i^n X_1)^{(j_1)}(\Delta_i^n X_1)^{(j_2)}(\Delta_i^n X_1)^{(j_3)}(\Delta_i^n X_1)^{(j_4)}{\bf{1}}_{C^{n}_{i,k_1,k_2,k_3,k_4}}\big|\mathcal{F}_{i-1}^n\Bigr]=R_{i-1}(h_n^{4\rho+3},\xi,\delta,\varepsilon,\zeta)
\end{align*}
for $(k_1,k_2,k_3,k_4)\in K_3$, and
\begin{align*}
    {\bf{E}}\Bigl[(\Delta_i^n X_1)^{(j_1)}(\Delta_i^n X_1)^{(j_2)}(\Delta_i^n X_1)^{(j_3)}(\Delta_i^n X_1)^{(j_4)}{\bf{1}}_{C^{n}_{i,k_1,k_2,k_3,k_4}}\big|\mathcal{F}_{i-1}^n\Bigr]=R_{i-1}(h_n^{4\rho+2},\xi,\delta,\varepsilon,\zeta)
\end{align*}
for $(k_1,k_2,k_3,k_4)\in K_4$. Since $5\rho+1<4\rho+2$, it follows from (\ref{C0000}) that
\begin{align*}
    &\quad\ {\bf{E}}\Bigl[(\Delta_i^n X_1)^{(j_1)}(\Delta_i^n X_1)^{(j_2)}(\Delta_i^n X_1)^{(j_3)}(\Delta_i^n X_1)^{(j_4)}{\bf{1}}_{\{|\Delta_i^n X|\leq Dh_n^{\rho}\}}\big|\mathcal{F}_{i-1}^n\Bigr]\\
    &=h_n^2\bigl\{({\bf{\Sigma}}^{11}_0)_{j_1j_2}({\bf{\Sigma}}^{11}_0)_{j_3j_4}+({\bf{\Sigma}}^{11}_0)_{j_1j_3}({\bf{\Sigma}}^{11}_0)_{j_2j_4}+({\bf{\Sigma}}^{11}_0)_{j_1j_4}({\bf{\Sigma}}^{11}_0)_{j_2j_3}\bigr\}\\
    &\qquad+R_{i-1}(h_n^{5\rho+1},\xi,\delta,\varepsilon,\zeta)+
    R_{i-1}(h_n^3,\xi,\delta,\varepsilon,\zeta)
\end{align*}
for a sufficiently large $n$, which completes the proof.
\end{proof}
\begin{proposition}\label{X2momentprop}
Under {\rm{\textbf{[A1]}}}-{\rm{\textbf{[A4]}}}, for a sufficiently large $n$, 
\begin{align*}
    {\bf{E}}\Bigl[(\Delta_i^n X_2)^{(j_1)}(\Delta_i^n X_2)^{(j_2)}{\bf{1}}_{\{|\Delta_i^n X|\leq Dh_n^{\rho}\}}\big|\mathcal{F}_{i-1}^n\Bigr]=h_n ({\bf{\Sigma}}^{22}_0)_{j_1j_2}
    +R_{i-1}(h_n^2,\xi,\delta,\varepsilon,\zeta)
\end{align*}
and
\begin{align*}
    &\quad\ {\bf{E}}\Bigl[(\Delta_i^n X_2)^{(j_1)}(\Delta_i^n X_2)^{(j_2)}(\Delta_i^n X_2)^{(j_3)}(\Delta_i^n X_2)^{(j_4)}{\bf{1}}_{\{|\Delta_i^n X|\leq Dh_n^{\rho}\}}\big|\mathcal{F}_{i-1}^n\Bigr]\\
    &=h_n^2\bigl\{({\bf{\Sigma}}^{22}_0)_{j_1j_2}({\bf{\Sigma}}^{22}_0)_{j_3j_4}+({\bf{\Sigma}}^{22}_0)_{j_1j_3}({\bf{\Sigma}}^{22}_0)_{j_2j_4}+({\bf{\Sigma}}^{22}_0)_{j_1j_4}({\bf{\Sigma}}^{22}_0)_{j_2j_3}\bigr\}\\
    &\qquad\qquad\qquad\qquad\qquad\qquad\qquad\qquad+R_{i-1}(h_n^{5\rho+1},\xi,\delta,\varepsilon,\zeta)+
    R_{i-1}(h_n^3,\xi,\delta,\varepsilon,\zeta)
\end{align*}
for $j_1,j_2,j_3,j_4=1,\ldots,p_2$.
\end{proposition}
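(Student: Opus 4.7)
The proof of Proposition \ref{X2momentprop} will mirror that of Proposition \ref{X1momentprop} line by line, replacing the role of $X_1 = {\bf{\Lambda}}_{1,0}\xi + \delta$ by
\begin{align*}
    X_{2,t} = {\bf{\Lambda}}_{2,0}{\bf{\Psi}}_0^{-1}{\bf{\Gamma}}_0 \xi_t + {\bf{\Lambda}}_{2,0}{\bf{\Psi}}_0^{-1}\zeta_t + \varepsilon_t,
\end{align*}
so that $X_2$ is a constant linear combination of the three latent processes $\xi, \zeta, \varepsilon$. I will only detail the second estimate, as the first follows from the same argument with fewer indices (and $3\rho+1\geq 2$).

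The plan is as follows. First I introduce the diffusion-only (jump-free) companions $\xi^c, \varepsilon^c, \zeta^c$, defined by the SDEs (\ref{xi}), (\ref{epsillon}), (\ref{zeta}) with the jump integrals removed and driven by the same Wiener processes $W_1, W_3, W_4$; they are made independent of the Poisson counts $J_{i,j}^n$ in the usual way. Setting
\begin{align*}
    X_{2,t}^c = {\bf{\Lambda}}_{2,0}{\bf{\Psi}}_0^{-1}{\bf{\Gamma}}_0 \xi_t^c + {\bf{\Lambda}}_{2,0}{\bf{\Psi}}_0^{-1}\zeta_t^c + \varepsilon_t^c,
\end{align*}
I appeal to the diffusion moment expansion (the analog of Lemma 21 in Kusano and Uchida \cite{Kusano(2023)}) applied to the $p_2$-dimensional diffusion $X_2^c$ with volatility ${\bf{\Sigma}}_0^{22}$, which produces
\begin{align*}
    {\bf{E}}\Bigl[\prod_{l=1}^{4}(\Delta_i^n X_2^c)^{(j_l)}\Big|\mathcal{F}_{i-1}^n\Bigr]
    = h_n^2\bigl\{({\bf{\Sigma}}^{22}_0)_{j_1j_2}({\bf{\Sigma}}^{22}_0)_{j_3j_4}+({\bf{\Sigma}}^{22}_0)_{j_1j_3}({\bf{\Sigma}}^{22}_0)_{j_2j_4}+({\bf{\Sigma}}^{22}_0)_{j_1j_4}({\bf{\Sigma}}^{22}_0)_{j_2j_3}\bigr\}+R_{i-1}(h_n^3,\xi,\varepsilon,\zeta).
\end{align*}

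Next I decompose the indicator via the disjoint events:
\begin{align*}
    {\bf{1}}_{\{|\Delta_i^n X|\leq Dh_n^{\rho}\}}={\bf{1}}_{C^{n}_{i,0,0,0,0}}+{\bf{1}}_{C^{n}_{i,1,1,1,1}}+\sum_{s=1}^{4}\sum_{(k_1,k_2,k_3,k_4)\in K_s}{\bf{1}}_{C^{n}_{i,k_1,k_2,k_3,k_4}}.
\end{align*}
For the leading contribution on $C^{n}_{i,0,0,0,0}$, I write ${\bf{1}}_{C^{n}_{i,0,0,0,0}}={\bf{1}}_{\{J_{i,j}^n=0,\ j=1,2,3,4\}}-{\bf{1}}_{D^{n}_{i,0,0,0,0}}$; on the event $\{J_{i,j}^n=0,\ \forall j\}$ the observed increment $\Delta_i^n X_2$ equals $\Delta_i^n X_2^c$, and the independence of this event from $X_2^c$ combined with ${\bf{P}}(J_{i,j}^n=0,\ \forall j\,|\,\mathcal{F}_{i-1}^n)=e^{-\lambda_0 h_n}=1+R_{i-1}(h_n,\xi,\varepsilon,\zeta)$ reproduces the leading $h_n^2$ term plus $R_{i-1}(h_n^3,\xi,\varepsilon,\zeta)$. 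The $D^{n}_{i,0,0,0,0}$ contribution is controlled by Cauchy--Schwarz together with (\ref{D0}) of Lemma \ref{Dlemma} and $L^l$-bounds on $|\Delta_i^n X_2|$ (established exactly as in Proposition 3.1 of Shimizu and Yoshida \cite{Shimizu(2006)}, extended to include the $\zeta$ and $\varepsilon$ components), giving a term smaller than any power of $h_n$.

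For the residual events, I use $|\Delta_i^n X_2|\leq |\Delta_i^n X|\leq Dh_n^\rho$ on the set $\{|\Delta_i^n X|\leq Dh_n^\rho\}$, so
\begin{align*}
    \Bigl|{\bf{E}}\Bigl[\prod_{l=1}^{4}(\Delta_i^n X_2)^{(j_l)}{\bf{1}}_{C^{n}_{i,k_1,k_2,k_3,k_4}}\Big|\mathcal{F}_{i-1}^n\Bigr]\Bigr|
    \leq D^4 h_n^{4\rho}\,{\bf{P}}\bigl(C^{n}_{i,k_1,k_2,k_3,k_4}\,\big|\,\mathcal{F}_{i-1}^n\bigr),
\end{align*}
and then plug in the probability bounds (\ref{C1}), (\ref{CK1}), (\ref{CK2}), (\ref{CK3}), (\ref{CK4}) of Lemma \ref{Clemma}. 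This yields orders $h_n^{4\rho+4}$ on $C^{n}_{i,1,1,1,1}$, $h_n^{5\rho+1}$ on $K_1$, $h_n^{4\rho+2}$ on $K_2\cup K_4$, and $h_n^{4\rho+3}$ on $K_3$. Since $\rho\in[1/3,1/2)$ implies $5\rho+1<4\rho+2$ and $5\rho+1<3$, all of these collapse into $R_{i-1}(h_n^{5\rho+1},\xi,\delta,\varepsilon,\zeta)$, completing the expansion.

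I do not expect a real obstacle here: the only new ingredient compared with Proposition \ref{X1momentprop} is the slightly larger set of driving processes appearing through ${\bf{\Lambda}}_{2,0}{\bf{\Psi}}_0^{-1}{\bf{\Gamma}}_0\xi$, ${\bf{\Lambda}}_{2,0}{\bf{\Psi}}_0^{-1}\zeta$, and $\varepsilon$, which merely enlarges the residual symbol from $R_{i-1}(\cdot,\xi,\delta)$ to $R_{i-1}(\cdot,\xi,\delta,\varepsilon,\zeta)$; the bookkeeping of residual orders across the five event classes is the only point requiring care, and it is handled exactly as in the proof of Proposition \ref{X1momentprop}.
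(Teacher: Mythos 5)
Your proposal is correct and follows essentially the same route as the paper, whose proof of this proposition is precisely the argument of Proposition \ref{X1momentprop} transferred to $X_{2,t}={\bf{\Lambda}}_{2,0}{\bf{\Psi}}_0^{-1}{\bf{\Gamma}}_0\xi_t+{\bf{\Lambda}}_{2,0}{\bf{\Psi}}_0^{-1}\zeta_t+\varepsilon_t$: jump-free companion diffusions with covariance ${\bf{\Sigma}}_0^{22}$, the decomposition over $C^{n}_{i,k_1,k_2,k_3,k_4}$, Cauchy--Schwarz with Lemma \ref{Dlemma} on $D^{n}_{i,0,0,0,0}$, and the $D^4h_n^{4\rho}$ bound combined with Lemma \ref{Clemma} on the remaining events. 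The only blemish is the parenthetical claim that $5\rho+1<3$, which fails for $\rho\in[2/5,1/2)$, but it is never needed since the statement retains the two remainder terms $R_{i-1}(h_n^{5\rho+1},\xi,\delta,\varepsilon,\zeta)$ and $R_{i-1}(h_n^{3},\xi,\delta,\varepsilon,\zeta)$ separately.
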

\begin{proposition}\label{X12momentprop}
Under {\bf{[A1]}}-{\bf{[A4]}}, for a sufficient large $n$, 
\begin{align*}
    {\bf{E}}\Bigl[(\Delta_i^n X_1)^{(j_1)}(\Delta_i^n X_2)^{(j_2)}{\bf{1}}_{\{|\Delta_i^n X|\leq Dh_n^{\rho}\}}\big|\mathcal{F}_{i-1}^n\Bigr]
    =h_n ({\bf{\Sigma}}^{12}_{0})_{j_1j_2}+h_n^2 R_{i-1}(1,\xi,\delta,\varepsilon,\zeta)
\end{align*}
for $j_1=1,\ldots,p_1,\ j_2=1,\ldots,p_2$, 
\begin{align*}
    &\quad\ {\bf{E}}\Bigl[(\Delta_i^n X_1)^{(j_1)}(\Delta_i^n X_1)^{(j_2)}(\Delta_i^n X_1)^{(j_3)}(\Delta_i^n X_2)^{(j_4)}{\bf{1}}_{\{|\Delta_i^n X|\leq Dh_n^{\rho}\}}\big|\mathcal{F}_{i-1}^n\Bigr]\\
    &=h_n^2\bigl\{({\bf{\Sigma}}^{11}_{0})_{j_1j_2}({\bf{\Sigma}}^{12}_{0})_{j_3j_4}+({\bf{\Sigma}}^{11}_{0})_{j_1j_3}({\bf{\Sigma}}^{12}_{0})_{j_2j_4}+({\bf{\Sigma}}^{12}_{0})_{j_1j_4}({\bf{\Sigma}}^{11}_{0})_{j_2j_3}\bigr\}\\
    &\qquad\qquad\qquad\qquad\qquad\qquad\qquad\qquad+R_{i-1}(h_n^{5\rho+1},\xi,\delta,\varepsilon,\zeta)+R_{i-1}(h_n^3,\xi,\delta,\varepsilon,\zeta)
\end{align*}
for $j_1,j_2,j_3=1,\ldots,p_1,\ j_4=1,\ldots,p_2$,
\begin{align*}
    &\quad\ {\bf{E}}\Bigl[(\Delta_i^n X_1)^{(j_1)}(\Delta_i^n X_1)^{(j_2)}(\Delta_i^n X_2)^{(j_3)}(\Delta_i^n X_2)^{(j_4)}{\bf{1}}_{\{|\Delta_i^n X|\leq Dh_n^{\rho}\}}\big|\mathcal{F}_{i-1}^n\Bigr]\\
    &=h_n^2\bigl\{({\bf{\Sigma}}^{11}_{0})_{j_1j_2}({\bf{\Sigma}}^{22}_{0})_{j_3j_4}+({\bf{\Sigma}}^{12}_{0})_{j_1j_3}({\bf{\Sigma}}^{12}_{0})_{j_2j_4}+({\bf{\Sigma}}^{12}_{0})_{j_1j_4}({\bf{\Sigma}}^{12}_{0})_{j_2j_3}\bigr\}\\
    &\qquad\qquad\qquad\qquad\qquad\qquad\qquad\qquad+R_{i-1}(h_n^{5\rho+1},\xi,\delta,\varepsilon,\zeta)+R_{i-1}(h_n^3,\xi,\delta,\varepsilon,\zeta)
\end{align*}
for $j_1,j_2=1,\ldots,p_1,\ j_3,j_4=1,\ldots,p_2$, and
\begin{align*}
    &\quad\ {\bf{E}}\Bigl[(\Delta_i^n X_1)^{(j_1)}(\Delta_i^n X_2)^{(j_2)}(\Delta_i^n X_2)^{(j_3)}(\Delta_i^n X_2)^{(j_4)}{\bf{1}}_{\{|\Delta_i^n X|\leq Dh_n^{\rho}\}}\big|\mathcal{F}_{i-1}^n\Bigr]\\
    &=h_n^2\bigl\{({\bf{\Sigma}}^{12}_{0})_{j_1j_2}({\bf{\Sigma}}^{22}_{0})_{j_3j_4}+({\bf{\Sigma}}^{12}_{0})_{j_1j_3}({\bf{\Sigma}}^{22}_{0})_{j_2j_4}+({\bf{\Sigma}}^{12}_{0})_{j_1j_4}({\bf{\Sigma}}^{22}_{0})_{j_2j_3}\bigr\}\\
    &\qquad\qquad\qquad\qquad\qquad\qquad\qquad\qquad+R_{i-1}(h_n^{5\rho+1},\xi,\delta,\varepsilon,\zeta)+R_{i-1}(h_n^3,\xi,\delta,\varepsilon,\zeta)
\end{align*}
for $j_1=1,\ldots,p_1,\ j_2,j_3,j_4=1,\ldots,p_2$.
\end{proposition}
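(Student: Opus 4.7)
The plan is to mimic the proof of Proposition \ref{X1momentprop}, now handling cross products between $\Delta_i^n X_1$ and $\Delta_i^n X_2$ components. First I would introduce the continuous counterparts $\{\xi^c_t\}$, $\{\delta^c_t\}$, $\{\varepsilon^c_t\}$, $\{\zeta^c_t\}$ defined by removing the jump term from (\ref{xi})--(\ref{zeta}), set $X_{1,t}^c={\bf{\Lambda}}_{1,0}\xi_t^c+\delta_t^c$ and $X_{2,t}^c={\bf{\Lambda}}_{2,0}{\bf{\Psi}}_0^{-1}({\bf{\Gamma}}_0\xi_t^c+\zeta_t^c)+\varepsilon_t^c$, and verify that these are independent of the Poisson counts $J_{i,j}^n$. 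Then for these continuous processes I would invoke the moment expansions in Lemma 21 of Kusano and Uchida \cite{Kusano(2023)}, applied to the joint vector $(X_1^{c\top},X_2^{c\top})^\top$, to obtain
\begin{align*}
    {\bf{E}}\Bigl[(\Delta_i^n X^c_1)^{(j_1)}(\Delta_i^n X^c_2)^{(j_2)}\big|\mathcal{F}_{i-1}^n\Bigr]=h_n ({\bf{\Sigma}}^{12}_{0})_{j_1j_2}+R_{i-1}(h_n^2,\xi,\delta,\varepsilon,\zeta)
\end{align*}
and the corresponding fourth-moment identities, where the three-term structure on the right-hand side of each display in the proposition is exactly Isserlis' formula over the three pairings $(j_1j_2)(j_3j_4)$, $(j_1j_3)(j_2j_4)$, $(j_1j_4)(j_2j_3)$, with the covariance assigning $\Sigma_0^{11}$, $\Sigma_0^{12}$, or $\Sigma_0^{22}$ depending on whether both indices correspond to $X_1$, one to $X_1$ and one to $X_2$, or both to $X_2$.

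Next I would decompose ${\bf{1}}_{\{|\Delta_i^n X|\leq Dh_n^{\rho}\}}=\sum_{k_1,k_2,k_3,k_4=0}^{2}{\bf{1}}_{C_{i,k_1,k_2,k_3,k_4}^n}$ and treat $C_{i,0,0,0,0}^n$ as the main term and the rest as negligible remainders. On the no-jump event I would write ${\bf{1}}_{C_{i,0,0,0,0}^n}={\bf{1}}_{\{J_{i,1}^n=\cdots=J_{i,4}^n=0\}}-{\bf{1}}_{D_{i,0,0,0,0}^n}$; on the first piece the paths of $X$ and $X^c$ coincide, the independence of the Poisson factors yields the multiplicative factor $e^{-\lambda_0 h_n}=1+h_n R_{i-1}(1,\cdot)$, while the second piece is controlled by Cauchy--Schwarz together with the moment bounds ${\bf{E}}[|\Delta_i^n\xi|^l+|\Delta_i^n\delta|^l+|\Delta_i^n\varepsilon|^l+|\Delta_i^n\zeta|^l\,|\,\mathcal{F}_{i-1}^n]=R_{i-1}(h_n,\xi,\delta,\varepsilon,\zeta)$ (as in Shimizu and Yoshida \cite{Shimizu(2006)}) and the estimate ${\bf{P}}(D_{i,0,0,0,0}^n|\mathcal{F}_{i-1}^n)=R_{i-1}(h_n^p,\cdot)$ from Lemma \ref{Dlemma}, producing a remainder of arbitrarily high order in $h_n$.

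For the remaining events I would use the uniform bound $|\Delta_i^n X|\leq Dh_n^{\rho}$, so that any product of four components is bounded by $D^4 h_n^{4\rho}$ on $\{|\Delta_i^n X|\leq Dh_n^{\rho}\}$, and any product of two components by $D^2 h_n^{2\rho}$. Combining this with the conditional probabilities of Lemma \ref{Clemma}, the contribution from $(k_1,k_2,k_3,k_4)\in K_1$ is at most $D^4 h_n^{4\rho}\cdot R_{i-1}(h_n^{\rho+1},\cdot)=R_{i-1}(h_n^{5\rho+1},\cdot)$; from $K_2$ and $K_4$ at most $R_{i-1}(h_n^{4\rho+2},\cdot)$; and from $K_3$ at most $R_{i-1}(h_n^{4\rho+3},\cdot)$; while $C_{i,1,1,1,1}^n$ gives $R_{i-1}(h_n^{4\rho+4},\cdot)$. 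Since $\rho\in[1/3,1/2)$, the slowest term is $h_n^{5\rho+1}$, matching the stated remainder in the fourth-moment identity. For the second-moment identity, the analogous computation on $K_1$ yields $D^2 h_n^{2\rho}\cdot R_{i-1}(h_n^{\rho+1},\cdot)=R_{i-1}(h_n^{3\rho+1},\cdot)$, and since $3\rho+1\geq 2$ this is absorbed into $h_n^2 R_{i-1}(1,\cdot)$.

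The main obstacle I anticipate is the careful bookkeeping of the cross-covariance structure in the Isserlis-type expansion: unlike Proposition \ref{X1momentprop}, the three pairings now draw from three distinct blocks $\Sigma_0^{11}$, $\Sigma_0^{12}$, $\Sigma_0^{22}$ depending on which indices are paired, and the off-diagonal block involves ${\bf{\Lambda}}_{2,0}{\bf{\Psi}}_0^{-1}{\bf{\Gamma}}_0$ rather than only ${\bf{\Lambda}}_{1,0}$. Once the continuous-path cross-moments are established through the joint representation above, the subsequent decomposition and remainder estimates are entirely parallel to those in the proof of Proposition \ref{X1momentprop}, so no new probabilistic input beyond Lemmas \ref{Pine}--\ref{Dlemma} is needed.
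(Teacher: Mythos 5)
Your proposal is correct and follows essentially the same route as the paper, whose proof of this proposition simply carries out the argument of Proposition \ref{X1momentprop} analogously: continuous counterparts $X_1^c$, $X_2^c$ with the Isserlis-type cross-moment expansion, the decomposition over $C^n_{i,k_1,k_2,k_3,k_4}$ with Lemma \ref{Clemma}, and the high-order control of $D^n_{i,0,0,0,0}$ via Cauchy--Schwarz and Lemma \ref{Dlemma}. Your exponent bookkeeping ($3\rho+1\geq 2$ for the second moments, $5\rho+1<4\rho+2$ for the fourth moments) matches the paper's remainder terms exactly.
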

\begin{proof}[\textbf{Proofs of Propositions \ref{X2momentprop}-\ref{X12momentprop}}]
In an analogous manner to Proposition \ref{X1momentprop}, these results can be shown.
\end{proof}
\begin{proposition}\label{Econvprop1}
Under {\bf{[A1]}}-{\bf{[A4]}}, as $n\longrightarrow\infty$, 
\begin{align}
    \frac{1}{nh_n}\sum_{i=1}^n{\bf{E}}\Bigl[(\Delta_i^n X)^{(j_1)}(\Delta_i^n X)^{(j_2)}{\bf{1}}_{\{|\Delta_i^n X|\leq Dh_n^{\rho}\}}\big|\mathcal{F}_{i-1}^n\Bigr]\stackrel{p}{\longrightarrow}({\bf{\Sigma}}_0)_{j_1j_2}\label{XXprob} 
\end{align}
and
\begin{align}
    \frac{1}{n^2h_n^2}\sum_{i=1}^n{\bf{E}}\Bigl[(\Delta_i^n X)^{(j_1)}(\Delta_i^n X)^{(j_2)}(\Delta_i^n X)^{(j_3)}(\Delta_i^n X)^{(j_4)}{\bf{1}}_{\{|\Delta_i^n X|\leq Dh_n^{\rho}\}}\big|\mathcal{F}_{i-1}^n\Bigr]\stackrel{p}{\longrightarrow}0\label{XXXXprob1}
\end{align}
for $j_1,j_2,j_3,j_4=1,\ldots,p$.
\end{proposition}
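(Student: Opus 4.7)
The plan is to derive both convergences from the conditional moment expansions already established in Propositions \ref{X1momentprop}, \ref{X2momentprop}, and \ref{X12momentprop}. For (\ref{XXprob}), note that for each pair $(j_1,j_2)\in\{1,\ldots,p\}^2$ the product $(\Delta_i^n X)^{(j_1)}(\Delta_i^n X)^{(j_2)}$ breaks into one of three cases depending on whether the indices select components of $X_1$, components of $X_2$, or one of each. In every case the appropriate proposition gives
\[
    {\bf{E}}\Bigl[(\Delta_i^n X)^{(j_1)}(\Delta_i^n X)^{(j_2)}{\bf{1}}_{\{|\Delta_i^n X|\leq Dh_n^{\rho}\}}\big|\mathcal{F}_{i-1}^n\Bigr] = h_n ({\bf{\Sigma}}_0)_{j_1j_2} + R_{i-1}(h_n^2,\xi,\delta,\varepsilon,\zeta),
\]
so that dividing by $nh_n$ and summing produces $({\bf{\Sigma}}_0)_{j_1j_2}+\frac{h_n}{n}\sum_{i=1}^n R_{i-1}(1,\xi,\delta,\varepsilon,\zeta)$. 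The remainder vanishes in probability: by (\ref{moment}), ${\bf{E}}\bigl[|R_{i-1}(1,\xi,\delta,\varepsilon,\zeta)|\bigr]$ is bounded uniformly in $i$ and $n$, whence $\frac{1}{n}\sum_{i=1}^n R_{i-1}(1,\xi,\delta,\varepsilon,\zeta)=O_p(1)$, and the extra factor $h_n\to 0$ forces the whole term to zero.

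For (\ref{XXXXprob1}) I would apply the fourth-moment parts of Propositions \ref{X1momentprop}--\ref{X12momentprop} after the analogous case split on $(j_1,j_2,j_3,j_4)$. In every regime the expansion takes the form
\[
    {\bf{E}}\Bigl[(\Delta_i^n X)^{(j_1)}(\Delta_i^n X)^{(j_2)}(\Delta_i^n X)^{(j_3)}(\Delta_i^n X)^{(j_4)}{\bf{1}}_{\{|\Delta_i^n X|\leq Dh_n^{\rho}\}}\big|\mathcal{F}_{i-1}^n\Bigr] = h_n^2 A_{j_1j_2j_3j_4} + R_{i-1}(h_n^{5\rho+1},\xi,\delta,\varepsilon,\zeta) + R_{i-1}(h_n^3,\xi,\delta,\varepsilon,\zeta),
\]
where $A_{j_1j_2j_3j_4}$ is a fixed sum of pairwise products of entries of ${\bf{\Sigma}}_0$. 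Dividing by $n^2 h_n^2$ and summing produces three pieces: the deterministic $A_{j_1j_2j_3j_4}/n$, and two error pieces $\frac{h_n^{5\rho-1}}{n}\cdot\frac{1}{n}\sum_{i=1}^n R_{i-1}(1,\xi,\delta,\varepsilon,\zeta)$ and $\frac{h_n}{n}\cdot\frac{1}{n}\sum_{i=1}^n R_{i-1}(1,\xi,\delta,\varepsilon,\zeta)$. Each tends to $0$ in probability: the first because $1/n\to 0$, the second because $h_n^{5\rho-1}\to 0$ (using $\rho\geq 1/3$, so $5\rho-1\geq 2/3>0$) together with the $O_p(1)$ bound on $\frac{1}{n}\sum R_{i-1}(1,\xi,\delta,\varepsilon,\zeta)$, and the third because $h_n\to 0$.

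There is no substantial analytical obstacle here, since all of the heavy machinery has been carried out in Propositions \ref{X1momentprop}--\ref{X12momentprop} and in the moment bound (\ref{moment}). The step requiring the most care is simply bookkeeping: verifying that the exponent $5\rho-1$ is strictly positive under the running hypothesis $\rho\in[1/3,1/2)$, and checking that the three structurally different index regimes (both indices in the $X_1$-block, both in the $X_2$-block, or mixed) all produce expansions whose leading term matches $h_n({\bf{\Sigma}}_0)_{j_1j_2}$ in the second-moment case and a scalar multiple of $h_n^2$ in the fourth-moment case, so that after division by $nh_n$ and $n^2h_n^2$ respectively the leading orders are exactly what is needed.
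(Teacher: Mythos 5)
Your proposal is correct and follows essentially the same route as the paper: split the indices into the $X_1$-block, $X_2$-block, and mixed cases, invoke the conditional moment expansions of Propositions \ref{X1momentprop}--\ref{X12momentprop}, and dispose of the remainders via the polynomial-growth bound together with the observation that $5\rho+1>2$ (equivalently $h_n^{5\rho-1}\to 0$). The paper states these steps more tersely, but the decomposition, the cited propositions, and the exponent bookkeeping are identical to yours.
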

\begin{proof}
Proposition \ref{X1momentprop} implies
\begin{align*}
    \frac{1}{nh_n}\sum_{i=1}^n {\bf{E}}\Bigl[(\Delta_i^n X_1)^{(j_1)}(\Delta_i^n X_1)^{(j_2)}{\bf{1}}_{\{|\Delta_i^n X|\leq Dh_n^{\rho}\}}\big|\mathcal{F}_{i-1}^n\Bigr]
    \stackrel{p}{\longrightarrow}({\bf{\Sigma}}_0^{11})_{j_1j_2}
\end{align*}
for $j_1,j_2=1,\ldots,p_1$. 
Propositions \ref{X2momentprop}-\ref{X12momentprop} imply that
\begin{align*}
    \frac{1}{nh_n}\sum_{i=1}^n {\bf{E}}\Bigl[(\Delta_i^n X_1)^{(j_1)}(\Delta_i^n X_2)^{(j_2)}{\bf{1}}_{\{|\Delta_i^n X|\leq Dh_n^{\rho}\}}\big|\mathcal{F}_{i-1}^n\Bigr]\stackrel{p}{\longrightarrow}({\bf{\Sigma}}_0^{12})_{j_1j_2}
\end{align*}
for $j_1=1,\ldots,p_1$ and $j_2=1,\ldots,p_2$, and
\begin{align*}
    \frac{1}{nh_n}\sum_{i=1}^n {\bf{E}}\Bigl[(\Delta_i^n X_2)^{(j_1)}(\Delta_i^n X_2)^{(j_2)}{\bf{1}}_{\{|\Delta_i^n X|\leq Dh_n^{\rho}\}}\big|\mathcal{F}_{i-1}^n\Bigr]\stackrel{p}{\longrightarrow}({\bf{\Sigma}}_0^{22})_{j_1j_2}
\end{align*}
$j_1,j_2=1,\ldots,p_2$, which yields (\ref{XXprob}). Since $5\rho+1>2$, by using Propositions \ref{X1momentprop}-\ref{X12momentprop}, (\ref{XXXXprob1}) can be shown in an analogous way.
\end{proof}
\begin{proposition}\label{Econvprop2}
Under {\bf{[A1]}}-{\bf{[A4]}}, as $n\longrightarrow\infty$,
\begin{align}
    &\frac{1}{\sqrt{n}h_n}\sum_{i=1}^n \Bigl\{{\bf{E}}\Bigl[(\Delta_i^n X)^{(j_1)}(\Delta_i^n X)^{(j_2)}{\bf{1}}_{\{|\Delta_i^n X|\leq Dh_n^{\rho}\}}\big|\mathcal{F}_{i-1}^n\Bigr]-h_n({\bf{\Sigma}}_0)_{j_1j_2}\Bigr\}\stackrel{p}{\longrightarrow}0,\label{sqrtXXprob}\\
    \begin{split}
    &\frac{1}{nh_n^2}\sum_{i=1}^n{\bf{E}}\Bigl[(\Delta_i^n X)^{(j_1)}(\Delta_i^n X)^{(j_2)}{\bf{1}}_{\{|\Delta_i^n X|\leq Dh_n^{\rho}\}}\big|\mathcal{F}_{i-1}^n\Bigr]\\
    &\qquad\qquad\quad\times {\bf{E}}\Bigl[(\Delta_i^n X)^{(j_3)}(\Delta_i^n X)^{(j_4)}{\bf{1}}_{\{|\Delta_i^n X|\leq Dh_n^{\rho}\}}\big|\mathcal{F}_{i-1}^n\Bigr]\stackrel{p}{\longrightarrow}
    ({\bf{\Sigma}}_0)_{j_1j_2}({\bf{\Sigma}}_0)_{j_3j_4},
    \end{split}\\
    \begin{split}
    &\frac{1}{nh_n^2}\sum_{i=1}^n{\bf{E}}\Bigl[(\Delta_i^n X)^{(j_1)}(\Delta_i^n X)^{(j_2)}(\Delta_i^n X)^{(j_3)}(\Delta_i^n X)^{(j_4)}{\bf{1}}_{\{|\Delta_i^n X|\leq Dh_n^{\rho}\}}\big|\mathcal{F}_{i-1}^n\Bigr]\\
    &\qquad\qquad\qquad\qquad\quad\stackrel{p}{\longrightarrow}({\bf{\Sigma}}_{0})_{j_1j_2}({\bf{\Sigma}}_{0})_{j_3j_4}+({\bf{\Sigma}}_{0})_{j_1j_3}({\bf{\Sigma}}_{0})_{j_2j_4}+({\bf{\Sigma}}_{0})_{j_1j_4}({\bf{\Sigma}}_{0})_{j_2j_3} \label{XXXXprob2}
    \end{split}
\end{align}
and
\begin{align}
    &\frac{1}{n^2h_n^4}\sum_{i=1}^n{\bf{E}}\Bigl[\bigl|(\Delta_i^n X)^{(j_1)}(\Delta_i^n X)^{(j_2)}\bigr|^4{\bf{1}}_{\{|\Delta_i^n X|\leq Dh_n^{\rho}\}}\big|\mathcal{F}_{i-1}^n\Bigr]\stackrel{p}{\longrightarrow}0\label{X4prob}
\end{align}
for $j_1,j_2,j_3,j_4=1,\ldots,p$.
\end{proposition}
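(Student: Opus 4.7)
My plan is to unify Propositions \ref{X1momentprop}, \ref{X2momentprop}, and \ref{X12momentprop} into a single bookkeeping statement covering all index combinations $j_1,\ldots,j_4 \in \{1,\ldots,p\}$: namely
\begin{align*}
    {\bf{E}}\Bigl[(\Delta_i^n X)^{(j_1)}(\Delta_i^n X)^{(j_2)}{\bf{1}}_{\{|\Delta_i^n X|\leq Dh_n^{\rho}\}}\big|\mathcal{F}_{i-1}^n\Bigr] &= h_n({\bf{\Sigma}}_0)_{j_1j_2}+R_{i-1}(h_n^2,\xi,\delta,\varepsilon,\zeta),
\end{align*}
and an analogous fourth-order expression with leading term
$h_n^2\{({\bf{\Sigma}}_0)_{j_1j_2}({\bf{\Sigma}}_0)_{j_3j_4}+({\bf{\Sigma}}_0)_{j_1j_3}({\bf{\Sigma}}_0)_{j_2j_4}+({\bf{\Sigma}}_0)_{j_1j_4}({\bf{\Sigma}}_0)_{j_2j_3}\}$ plus remainders of order $R_{i-1}(h_n^{5\rho+1},\cdot)$ and $R_{i-1}(h_n^3,\cdot)$. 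All four assertions then follow by straightforward substitution and by exploiting $T=nh_n$ fixed (so $h_n=T/n$) together with the moment bound (\ref{moment}), which implies $\frac{1}{n}\sum_{i=1}^n R_{i-1}(1,\xi,\delta,\varepsilon,\zeta)=O_p(1)$.

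For (\ref{sqrtXXprob}), the centered sum equals $\frac{1}{\sqrt{n}h_n}\sum_{i=1}^n R_{i-1}(h_n^2,\xi,\delta,\varepsilon,\zeta)=\sqrt{n}h_n\cdot\frac{1}{n}\sum_{i=1}^n R_{i-1}(1,\xi,\delta,\varepsilon,\zeta)$, and $\sqrt{n}h_n=T/\sqrt{n}\to 0$. For the second assertion, expanding the product of two second-order conditional moments gives leading contribution $h_n^2({\bf{\Sigma}}_0)_{j_1j_2}({\bf{\Sigma}}_0)_{j_3j_4}$ plus terms of order $R_{i-1}(h_n^3,\cdot)$ and $R_{i-1}(h_n^4,\cdot)$; division by $nh_n^2$ then yields the claimed limit since the remainders contribute factors $h_n$ or $h_n^2$ that vanish. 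For (\ref{XXXXprob2}), division of the fourth-order expression by $nh_n^2$ reproduces the Isserlis-type leading term, while $\frac{1}{nh_n^2}\sum_{i=1}^n R_{i-1}(h_n^{5\rho+1},\cdot)=h_n^{5\rho-1}\cdot\frac{1}{n}\sum_{i=1}^n R_{i-1}(1,\cdot)\to 0$ because $5\rho-1\geq 2/3>0$ under $\rho\in[1/3,1/2)$, and similarly $\frac{1}{nh_n^2}\sum_{i=1}^n R_{i-1}(h_n^3,\cdot)$ yields a factor $h_n\to 0$.

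For (\ref{X4prob}), I would use the threshold bound to reduce an 8th-order moment to a 4th-order one: on $\{|\Delta_i^n X|\leq Dh_n^{\rho}\}$,
\begin{align*}
    \bigl|(\Delta_i^n X)^{(j_1)}(\Delta_i^n X)^{(j_2)}\bigr|^4\leq |\Delta_i^n X|^8\leq D^4 h_n^{4\rho}|\Delta_i^n X|^4,
\end{align*}
so taking conditional expectation and using ${\bf{E}}[|\Delta_i^n X|^4{\bf{1}}_{\{|\Delta_i^n X|\leq Dh_n^{\rho}\}}|\mathcal{F}_{i-1}^n]=R_{i-1}(h_n^2,\xi,\delta,\varepsilon,\zeta)$ (immediate from the fourth-order estimate summed over indices, since $5\rho+1>2$) gives the bound $R_{i-1}(h_n^{4\rho+2},\xi,\delta,\varepsilon,\zeta)$. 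Then
\begin{align*}
    \frac{1}{n^2 h_n^4}\sum_{i=1}^n R_{i-1}(h_n^{4\rho+2},\xi,\delta,\varepsilon,\zeta)=\frac{h_n^{4\rho-2}}{n}\cdot\frac{1}{n}\sum_{i=1}^n R_{i-1}(1,\xi,\delta,\varepsilon,\zeta)=T^{4\rho-2}n^{1-4\rho}\cdot O_p(1),
\end{align*}
which vanishes since $1-4\rho\leq -1/3<0$.

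The main obstacle is purely organizational: one must carefully match each $R$-order with the normalizing factor $1/(n h_n^{\alpha})$ after substituting $h_n=T/n$, and verify that the range $\rho\in[1/3,1/2)$ makes every remainder exponent strictly positive in $n$. No genuinely new probabilistic input is required beyond the moment estimates already established in Propositions \ref{X1momentprop}--\ref{X12momentprop} and the uniform moment bound (\ref{moment}).
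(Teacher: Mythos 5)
Your argument is correct, and for the limits (\ref{sqrtXXprob})--(\ref{XXXXprob2}) it is essentially the paper's route: the authors also feed the moment expansions of Propositions \ref{X1momentprop}--\ref{X12momentprop} into the normalizing factors (they simply delegate the bookkeeping to Lemma 6 of Kusano and Uchida \cite{Kusano(JJSD)}), and your exponent checks ($\sqrt{n}h_n=T/\sqrt{n}\to 0$, $5\rho-1\geq 2/3>0$, remainders absorbed via $\frac{1}{n}\sum_{i}R_{i-1}(1,\xi,\delta,\varepsilon,\zeta)=O_p(1)$ from (\ref{moment})) match what that computation requires. Where you genuinely depart from the paper is (\ref{X4prob}): the paper re-runs the jump-classification decomposition over the events $C^{n}_{i,k_1,k_2,k_3,k_4}$, exactly as in the proof of Proposition \ref{X1momentprop}, and obtains the bound $R_{i-1}(h_n^4,\xi,\delta,\varepsilon,\zeta)$, so that $\frac{1}{n^2h_n^4}\sum_i R_{i-1}(h_n^4,\cdot)=\frac{1}{n}\cdot\frac{1}{n}\sum_i R_{i-1}(1,\cdot)\to 0$ with no constraint on $\rho$ entering at this step. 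You instead peel off the threshold, $|\Delta_i^n X|^8\leq D^4h_n^{4\rho}|\Delta_i^n X|^4$ on $\{|\Delta_i^n X|\leq Dh_n^{\rho}\}$, and reuse the already-established fourth-moment estimate (summed over index pairs, valid since $5\rho+1>2$) to get $R_{i-1}(h_n^{4\rho+2},\xi,\delta,\varepsilon,\zeta)$; this is a cruder bound than the paper's $h_n^4$, and your final rate $n^{1-4\rho}$ leans on the standing assumption $\rho\in[1/3,1/2)$, but it is perfectly sufficient and has the advantage of not repeating the event-by-event jump analysis. Both proofs are valid; the paper's yields a sharper, $\rho$-free order for the summand, while yours is shorter and recycles existing estimates.
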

\begin{proof}
By using Propositions \ref{X1momentprop}-\ref{X12momentprop}, (\ref{sqrtXXprob})-(\ref{XXXXprob2}) can be shown in a similar way to Lemma $6$ in Kusano and Uchida \cite{Kusano(JJSD)}. We prove (\ref{X4prob}). In an analogous manner to the proof of Proposition \ref{X1momentprop}, one gets
\begin{align*}
    &\quad\ {\bf{E}}\Bigl[\bigl|(\Delta_i^n X)^{(j_1)}(\Delta_i^n X)^{(j_2)}\bigr|^4{\bf{1}}_{\{|\Delta_i^n X|\leq Dh_n^{\rho}\}}\big|\mathcal{F}_{i-1}^n\biggr]\\
    &={\bf{E}}\biggl[\bigl|(\Delta_i^n X)^{(j_1)}(\Delta_i^n X)^{(j_2)}\bigr|^4{\bf{1}}_{C^{n}_{i,0,0,0,0}}\big|\mathcal{F}_{i-1}^n\Bigr]+{\bf{E}}\Bigl[\bigl|(\Delta_i^n X)^{(j_1)}(\Delta_i^n X)^{(j_2)}\bigr|^4{\bf{1}}_{C^{n}_{i,1,1,1,1}}\big|\mathcal{F}_{i-1}^n\Bigr]\\
    &\quad+\sum_{(k_1,k_2,k_3,k_4)\in K_1} {\bf{E}}\Bigl[\bigl|(\Delta_i^n X)^{(j_1)}(\Delta_i^n X)^{(j_2)}\bigr|^4{\bf{1}}_{C^{n}_{i,k_1,k_2,k_3,k_4}}\big|\mathcal{F}_{i-1}^n\Bigr]\\
    &\quad+\sum_{(k_1,k_2,k_3,k_4)\in K_2} {\bf{E}}\Bigl[\bigl|(\Delta_i^n X)^{(j_1)}(\Delta_i^n X)^{(j_2)}\bigr|^4{\bf{1}}_{C^{n}_{i,k_1,k_2,k_3,k_4}}\big|\mathcal{F}_{i-1}^n\Bigr]\\
    &\quad+\sum_{(k_1,k_2,k_3,k_4)\in K_3} {\bf{E}}\Bigl[\bigl|(\Delta_i^n X)^{(j_1)}(\Delta_i^n X)^{(j_2)}\bigr|^4{\bf{1}}_{C^{n}_{i,k_1,k_2,k_3,k_4}}\big|\mathcal{F}_{i-1}^n\Bigr]\\
    &\quad+\sum_{(k_1,k_2,k_3,k_4)\in K_4} {\bf{E}}\Bigl[\bigl|(\Delta_i^n X)^{(j_1)}(\Delta_i^n X)^{(j_2)}\bigr|^4{\bf{1}}_{C^{n}_{i,k_1,k_2,k_3,k_4}}\big|\mathcal{F}_{i-1}^n\Bigr]=R_{i-1}(h_n^4,\xi,\delta,\varepsilon,\zeta)
\end{align*}
for a sufficiently large $n$, which yields (\ref{X4prob}).
\end{proof}
\begin{proof}[\textbf{Proof of Theorem \ref{Qtheorem}}]
To show (\ref{Qcons}), it is sufficient to prove 
\begin{align}
    \frac{1}{\tilde{N}_nh_n}\sum_{i=1}^n (\Delta_i^n X)^{(j_1)}(\Delta_i^n X)^{(j_2)}{\bf{1}}_{\{|\Delta_i^n X|\leq Dh_n^{\rho}\}}\stackrel{p}{\longrightarrow}({\bf{\Sigma}}_0)_{j_1j_2} \label{Qconselement}
\end{align}
for $j_1,j_2=1,\ldots,p$. Since it holds from Proposition \ref{Econvprop1} that
\begin{align*}
    \frac{1}{nh_n}\sum_{i=1}^n{\bf{E}}\Bigl[(\Delta_i^n X)^{(j_1)}(\Delta_i^n X)^{(j_2)}{\bf{1}}_{\{|\Delta_i^n X|\leq Dh_n^{\rho}\}}\big|\mathcal{F}_{i-1}^n\Bigr]\stackrel{p}{\longrightarrow} ({\bf{\Sigma}}_0)_{j_1j_2}
\end{align*}
and
\begin{align*}
    \frac{1}{n^2h_n^2}\sum_{i=1}^n{\bf{E}}\Bigl[(\Delta_i^n X)^{(j_1)}(\Delta_i^n X)^{(j_2)}(\Delta_i^n X)^{(j_1)}(\Delta_i^n X)^{(j_2)}{\bf{1}}_{\{|\Delta_i^n X|\leq Dh_n^{\rho}\}}\big|\mathcal{F}_{i-1}^n\Bigr]\stackrel{p}{\longrightarrow}0,
\end{align*}
it follows from Lemma 9 in Genon-Catalot and Jacod \cite{Genon(1993)} that 
\begin{align*}
    \frac{1}{nh_n}\sum_{i=1}^n (\Delta_i^n X)^{(j_1)}(\Delta_i^n X)^{(j_2)}{\bf{1}}_{\{|\Delta_i^n X|\leq Dh_n^{\rho}\}}\stackrel{p}{\longrightarrow}({\bf{\Sigma}}_0)_{j_1j_2},
\end{align*}
so that Proposition \ref{Nprop} implies
\begin{align*}
    \frac{n}{\tilde{N}_n}\times \frac{1}{nh_n}\sum_{i=1}^n (\Delta_i^n X)^{(j_1)}(\Delta_i^n X)^{(j_2)}{\bf{1}}_{\{|\Delta_i^n X|\leq Dh_n^{\rho}\}}\stackrel{p}{\longrightarrow}({\bf{\Sigma}}_0)_{j_1j_2},
\end{align*}
which yields (\ref{Qconselement}). Next, we consider the following convergence:
\begin{align}
    \sqrt{n}(\vec\hat{\bf{\Sigma}}_n-\vec{\bf{\Sigma}}_0)=\sum_{i=1}^n L_{i}^n\stackrel{d}{\longrightarrow}N_{p^2}\bigl(0,{\bf{G}}_0\bigr),\label{vecQprob}
\end{align}
where ${\bf{G}}_0$ is a $p^2\times p^2$ matrix whose elements are 
\begin{align*}
    ({\bf{G}}_0)_{p(j_2-1)+j_1,p(j_4-1)+j_3}=({\bf{\Sigma}}_0)_{j_1j_3}({\bf{\Sigma}}_0)_{j_2j_4}+({\bf{\Sigma}}_0)_{j_1j_4}({\bf{\Sigma}}_0)_{j_2j_3}
\end{align*}
for $j_1,j_2,j_3,j_4=1,\ldots,p$, and
\begin{align*}
    L_i^n=\vec \biggl\{\frac{\sqrt{n}}{\tilde{N}_nh_n}(\Delta_i^n X)(\Delta_i^n X)^{\top}{\bf{1}}_{\{|\Delta_i^n X|\leq Dh_n^{\rho}\}}-\frac{1}{\sqrt{n}}{\bf{\Sigma}}_0\biggr\}.
\end{align*}
Since
\begin{align*}
    \mathbb{D}_p^{+}{\bf{G}}_0\mathbb{D}_p^{+\top}=2\mathbb{D}_p^{+}({\bf{\Sigma}}_0\otimes {\bf{\Sigma}}_0)\mathbb{D}_p^{+\top},
\end{align*}
if (\ref{vecQprob}) holds, (\ref{Qasym}) is shown in a similar way to Theorem $2$ in Kusano and Uchida \cite{Kusano(JJSD)}. This is why we show (\ref{vecQprob}).
In order to prove (\ref{vecQprob}),  it is sufficient to show
\begin{align}
    &\qquad\qquad\qquad\qquad\sum_{i=1}^n {\bf{E}}\bigl[L_i^n|\mathcal{F}_{i-1}^n\bigr]\stackrel{p}{\longrightarrow}0, \label{Lprob}\\
    &\sum_{i=1}^n \Bigr\{{\bf{E}}\bigl[L_i^nL_i^{n\top}|\mathcal{F}_{i-1}^n\bigr]
    -{\bf{E}}\bigl[L_i^n|\mathcal{F}_{i-1}^n\bigr]{\bf{E}}\bigl[L_i^n|\mathcal{F}_{i-1}^n\bigr]^{\top}\Bigr\}\stackrel{p}{\longrightarrow}{\bf{G}}_0  \label{L^2prob}
\end{align}
and
\begin{align}
    \sum_{i=1}^n {\bf{E}}\bigl[|L_i^n|^4|\mathcal{F}_{i-1}^n\bigr]&\stackrel{p}{\longrightarrow}0 \label{L4prob}
\end{align}
by using Theorems 3.2 and 3.4 in Hall and Heyde \cite{Hall(1981)}. Note that 
\begin{align*}
    (\vec A)^{(p(j_2-1)+j_1)}=A_{j_1j_2},\quad (vv^{\top})_{j_1j_2}=v^{(j_1)}v^{(j_2)}
\end{align*}
for any matrix $A\in\mathbb{R}^{p^2\times p^2}$ and vector $v\in\mathbb{R}^{p}$. Proposition \ref{Nprop} and the delta method imply 
\begin{align*}
    \sqrt{n}\Bigl(\frac{n}{\tilde{N}_n}-1\Bigr)\stackrel{p}{\longrightarrow}0,
\end{align*}
so that it follows from Proposition \ref{Econvprop2} that
\begin{align*}
    &\quad\ \biggl(\sum_{i=1}^n {\bf{E}}\bigl[L_i^n|\mathcal{F}_{i-1}^n\bigr]\biggr)^{(p(j_2-1)+j_1)}\\
    &=\frac{n}{\tilde{N}_n}\times \frac{1}{\sqrt{n}h_n}\sum_{i=1}^n \Bigl\{{\bf{E}}\Bigl[(\Delta_i^n X)^{(j_1)}(\Delta_i^n X)^{(j_2)}{\bf{1}}_{\{|\Delta_i^n X|\leq Dh_n^{\rho}\}}\big|\mathcal{F}_{i-1}^n\Bigr]-h_n({\bf{\Sigma}}_0)_{j_1j_2}\Bigr\}\\
    &\qquad+({\bf{\Sigma}}_0)_{j_1j_2}\sqrt{n}\Bigl(\frac{n}{\tilde{N}_n}-1\Bigr)\stackrel{p}{\longrightarrow}0,
\end{align*}
which yields (\ref{Lprob}). Since 
\begin{align*}
    &\quad\ \Bigl({\bf{E}}\bigl[L_i^nL_i^{n\top}|\mathcal{F}_{i-1}^n\bigr]\Bigr)_{p(j_2-1)+j_1,p(j_4-1)+j_3}\\
    &=\frac{n}{\tilde{N}_n^2h_n^2}{\bf{E}}\Bigl[(\Delta_i^n X)^{(j_1)}(\Delta_i^n X)^{(j_2)}(\Delta_i^n X)^{(j_3)}(\Delta_i^n X)^{(j_4)}{\bf{1}}_{\{|\Delta_i^n X|\leq Dh_n^{\rho}\}}\big|\mathcal{F}_{i-1}^n\Bigr]\\
    &\quad-\frac{1}{\tilde{N}_nh_n}{\bf{E}}\Bigl[(\Delta_i^n X)^{(j_1)}(\Delta_i^n X)^{(j_2)}{\bf{1}}_{\{|\Delta_i^n X|\leq Dh_n^{\rho}\}}\big|\mathcal{F}_{i-1}^n\Bigr]({\bf{\Sigma}}_0)_{j_3j_4}\\
    &\quad-\frac{1}{\tilde{N}_nh_n}\bigl({\bf{\Sigma}}_0\bigr)_{j_1j_2}{\bf{E}}\Bigl[(\Delta_i^n X)^{(j_3)}(\Delta_i^n X)^{(j_4)}{\bf{1}}_{\{|\Delta_i^n X|\leq Dh_n^{\rho}\}}\big|\mathcal{F}_{i-1}^n\Bigr]+\frac{1}{n}({\bf{\Sigma}}_0)_{j_1j_2}({\bf{\Sigma}}_0)_{j_3j_4}
\end{align*}
and
\begin{align*}
    &\quad\ \Bigl({\bf{E}}\bigl[L_i^n|\mathcal{F}_{i-1}^n\bigr]{\bf{E}}\bigl[L_i^n|\mathcal{F}_{i-1}^n\bigr]^{\top}\Bigr)_{p(j_2-1)+j_1,p(j_4-1)+j_3}\\
    &=\frac{n}{\tilde{N}_n^2h_n^2}{\bf{E}}\Bigl[(\Delta_i^n X)^{(j_1)}(\Delta_i^n X)^{(j_2)}{\bf{1}}_{\{|\Delta_i^n X|\leq Dh_n^{\rho}\}}\big|\mathcal{F}_{i-1}^n\Bigr]{\bf{E}}\Bigl[(\Delta_i^n X)^{(j_3)}(\Delta_i^n X)^{(j_4)}{\bf{1}}_{\{|\Delta_i^n X|\leq Dh_n^{\rho}\}}\big|\mathcal{F}_{i-1}^n\Bigr]\\
    &\quad-\frac{1}{\tilde{N}_nh_n}{\bf{E}}\Bigl[(\Delta_i^n X)^{(j_1)}(\Delta_i^n X)^{(j_2)}{\bf{1}}_{\{|\Delta_i^n X|\leq Dh_n^{\rho}\}}\big|\mathcal{F}_{i-1}^n\Bigr]({\bf{\Sigma}}_0)_{j_3j_4}\\
    &\quad-\frac{1}{\tilde{N}_nh_n}({\bf{\Sigma}}_0)_{j_1j_2}{\bf{E}}\Bigl[(\Delta_i^n X)^{(j_3)}(\Delta_i^n X)^{(j_4)}{\bf{1}}_{\{|\Delta_i^n X|\leq Dh_n^{\rho}\}}\big|\mathcal{F}_{i-1}^n\Bigr]+\frac{1}{n}({\bf{\Sigma}}_0)_{j_1j_2}({\bf{\Sigma}}_0)_{j_3j_4},
\end{align*}
we see from Propositions \ref{Nprop} and \ref{Econvprop2} that
\begin{align*}
    &\quad\ \sum_{i=1}^n\Bigl({\bf{E}}\bigl[L_i^nL_i^{n\top}|\mathcal{F}_{i-1}^n\bigr]-{\bf{E}}\bigl[L_i^n|\mathcal{F}_{i-1}^n\bigr]{\bf{E}}\bigl[L_i^n|\mathcal{F}_{i-1}^n\bigr]^{\top}\Bigr)_{p(j_2-1)+j_1,p(j_4-1)+j_3}\\
    &=\Bigl(\frac{n}{\tilde{N}_n}\Bigr)^2\times \frac{1}{nh_n^2}\sum_{i=1}^n{\bf{E}}\Bigl[(\Delta_i^n X)^{(j_1)}(\Delta_i^n X)^{(j_2)}(\Delta_i^n X)^{(j_3)}(\Delta_i^n X)^{(j_4)}{\bf{1}}_{\{|\Delta_i^n X|\leq Dh_n^{\rho}\}}\big|\mathcal{F}_{i-1}^n\Bigr]\\
    &\qquad-\biggl(\frac{n}{\tilde{N}_n}\biggr)^2\times\frac{1}{nh_n^2}\sum_{i=1}^n{\bf{E}}\Bigl[(\Delta_i^n X)^{(j_1)}(\Delta_i^n X)^{(j_2)}{\bf{1}}_{\{|\Delta_i^n X|\leq Dh_n^{\rho}\}}\big|\mathcal{F}_{i-1}^n\Bigr]\\
    &\qquad\qquad\qquad\qquad\qquad\qquad\qquad\qquad\times{\bf{E}}\Bigl[(\Delta_i^n X)^{(j_3)}(\Delta_i^n X)^{(j_4)}{\bf{1}}_{\{|\Delta_i^n X|\leq Dh_n^{\rho}\}}\big|\mathcal{F}_{i-1}^n\Bigr]\\
    &\stackrel{p}{\longrightarrow}({\bf{\Sigma}}_0)_{j_1j_3}({\bf{\Sigma}}_0)_{j_2j_4}+({\bf{\Sigma}}_0)_{j_1j_4}({\bf{\Sigma}}_0)_{j_2j_3},
\end{align*}
which implies (\ref{L^2prob}). Furthermore, Propositions \ref{Nprop} and \ref{Econvprop2} show
\begin{align*}
    &\quad\ \sum_{i=1}^n {\bf{E}}\Bigl[|L_i^n|^4\big|\mathcal{F}_{i-1}^n\Bigr]\\
    &\leq C\sum_{j_1=1}^{p}\sum_{j_2=1}^p\sum_{i=1}^n{\bf{E}}\Bigl[\bigl|(L_i^n)^{(p(j_2-1)+j_1)}\bigr|^{4}\big|\mathcal{F}_{i-1}^n\Bigr]\\
    &\leq \Bigl(\frac{n}{\tilde{N}_n}\Bigr)^4\times\frac{C}{n^2h_n^4}\sum_{j_1=1}^{p}\sum_{j_2=1}^p\sum_{i=1}^n{\bf{E}}\biggl[\bigl|(\Delta_i^n X)^{(j_1)}(\Delta_i^n X)^{(j_2)}\bigr|^4{\bf{1}}_{\{|\Delta_i^n X|\leq Dh_n^{\rho}\}}\big|\mathcal{F}_{i-1}^n\biggr]\\
    &\qquad\qquad
    +\frac{C}{n}\sum_{j_1=1}^{p}\sum_{j_2=1}^p\bigl|({\bf{\Sigma}}_0)_{j_1j_2}\bigr|^{4}\stackrel{p}{\longrightarrow}0,
\end{align*}
so that we obtain (\ref{L4prob}), which completes the proof.
\end{proof}
In Proposition \ref{supHprop} and the proof of Theorem \ref{thetatheorem}, we simply write ${\bf{P}}_{\theta_0}$ as ${\bf{P}}$. Define
\begin{align*}
    {\bf{H}}(\theta)
    &=-\frac{1}{2}\log\det{\bf{\Sigma}}(\theta)-\frac{1}{2}\tr\bigl\{{\bf{\Sigma}}(\theta)^{-1}{\bf{\Sigma}}(\theta_0)\bigr\}.
\end{align*}
Set $\partial_{\theta}=\partial/\partial\theta$ and $\partial^2_{\theta}=\partial_{\theta}\partial_{\theta}^{\top}$.
\begin{proposition}\label{supHprop}
Under {\rm{\textbf{[A1]}}}-{\rm{\textbf{[A4]}}}, as $n\longrightarrow\infty$,
\begin{align}
    \sup_{\theta\in\Theta}\Bigl|\frac{1}{n}{\bf{H}}_n(\theta)-{\bf{H}}(\theta)\Bigr|&\stackrel{p}{\longrightarrow}0, \label{Fsup}\\
    \sup_{\theta\in\Theta}\Bigl|\frac{1}{n}\partial^2_{\theta}{\bf{H}}_n(\theta)-\partial^2_{\theta}{\bf{H}}(\theta)\Bigr|&\stackrel{p}{\longrightarrow}0\label{F2sup}
\end{align}
under ${\bf{P}}$.
\end{proposition}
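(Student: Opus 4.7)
The plan is to exploit the fact that both $\frac{1}{n}{\bf{H}}_n(\theta)$ and its Hessian in $\theta$ depend on the data only through two $\theta$-free statistics, namely $N_n/n$ and
\[
\hat{\bf{C}}_n := \frac{1}{nh_n}\sum_{i=1}^n(\Delta_i^n X)(\Delta_i^n X)^{\top}{\bf{1}}_{\{|\Delta_i^n X|\leq Dh_n^{\rho}\}} = \frac{\tilde{N}_n}{n}\hat{\bf{\Sigma}}_n.
\]
Combining Theorem \ref{Qtheorem} with Proposition \ref{Nprop}, these satisfy $N_n/n\stackrel{p}{\longrightarrow}1$ and $\hat{\bf{C}}_n\stackrel{p}{\longrightarrow}{\bf{\Sigma}}(\theta_0)$, and these will be the only stochastic inputs to the whole argument.

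Next, I would write
\[
\frac{1}{n}{\bf{H}}_n(\theta)-{\bf{H}}(\theta) = -\frac{1}{2}\Bigl(\frac{N_n}{n}-1\Bigr)\log\det{\bf{\Sigma}}(\theta) - \frac{1}{2}\tr\bigl\{{\bf{\Sigma}}(\theta)^{-1}\bigl(\hat{\bf{C}}_n-{\bf{\Sigma}}(\theta_0)\bigr)\bigr\},
\]
which cleanly separates the $\theta$-dependence (carried by the continuous maps $\theta\mapsto\log\det{\bf{\Sigma}}(\theta)$ and $\theta\mapsto{\bf{\Sigma}}(\theta)^{-1}$) from the randomness (carried by $o_p(1)$ scalar and matrix quantities that do not depend on $\theta$). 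Since $\Theta$ is compact, $\theta\mapsto{\bf{\Sigma}}(\theta)$ is continuous, and ${\bf{\Sigma}}(\theta)\in\mathcal{M}_p^+$ for every $\theta\in\Theta$, the suprema $\sup_{\theta\in\Theta}|\log\det{\bf{\Sigma}}(\theta)|$ and $\sup_{\theta\in\Theta}|{\bf{\Sigma}}(\theta)^{-1}|$ are finite deterministic constants. Multiplying these uniform bounds against the $o_p(1)$ factors (using $|\tr(AB)|\leq|A||B|$ for the second term) delivers (\ref{Fsup}).

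For (\ref{F2sup}) I would apply $\partial_\theta^2$ to the same identity. Using $\partial_\theta\log\det{\bf{\Sigma}}(\theta) = \tr\bigl\{{\bf{\Sigma}}(\theta)^{-1}\partial_\theta{\bf{\Sigma}}(\theta)\bigr\}$ and $\partial_\theta{\bf{\Sigma}}(\theta)^{-1} = -{\bf{\Sigma}}(\theta)^{-1}(\partial_\theta{\bf{\Sigma}}(\theta)){\bf{\Sigma}}(\theta)^{-1}$, one checks that $\partial_\theta^2\log\det{\bf{\Sigma}}(\theta)$ and every entry of $\partial_\theta^2{\bf{\Sigma}}(\theta)^{-1}$ are polynomial expressions in ${\bf{\Sigma}}(\theta)^{-1}$ together with the first two $\theta$-derivatives of ${\bf{\Sigma}}(\theta)$. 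Because ${\bf{\Sigma}}(\theta)$ is twice continuously differentiable on the compact set $\Theta$, these second derivatives are continuous and hence uniformly bounded on $\Theta$, and the same separation-of-variables argument as above yields (\ref{F2sup}).

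I do not expect any substantive obstacle: all stochastic content is already packaged in Theorem \ref{Qtheorem} and Proposition \ref{Nprop}, and the upgrade from pointwise to uniform convergence is immediate once the $\theta$-dependent coefficients are controlled. The only point requiring minor care is verifying continuity (and hence uniform boundedness on $\Theta$) of $\partial_\theta^2 {\bf{\Sigma}}(\theta)^{-1}$, which follows from the chain rule applied to the inverse-matrix identity above together with the assumed twice continuous differentiability of ${\bf{\Sigma}}(\theta)$.
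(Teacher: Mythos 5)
Your proposal is correct and follows essentially the same route as the paper: the identical decomposition of $\tfrac{1}{n}{\bf{H}}_n(\theta)-{\bf{H}}(\theta)$ into the $o_p(1)$ factors $\bigl(\tfrac{N_n}{n}-1\bigr)$ and $\tfrac{\tilde{N}_n}{n}\hat{\bf{\Sigma}}_n-{\bf{\Sigma}}(\theta_0)$ multiplied by $\theta$-dependent coefficients that are continuous, hence uniformly bounded on the compact set $\Theta$, with Theorem \ref{Qtheorem} and Proposition \ref{Nprop} supplying the stochastic input. The only cosmetic difference is that the paper writes out the second-order derivatives of ${\bf{H}}_n$ and ${\bf{H}}$ explicitly before applying the same bound, whereas you argue their uniform boundedness via the inverse-matrix differentiation identities; this is not a substantive deviation.
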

\begin{proof}
Since
\begin{align*}
    &\quad\ \frac{1}{n}{\bf{H}}_n(\theta)-{\bf{H}}(\theta)\\
    &=-\frac{1}{2}\log\det {\bf{\Sigma}}(\theta)\Bigl(\frac{N_n}{n}-1\Bigr)-\frac{1}{2}
    \tr\Bigl\{{\bf{\Sigma}}(\theta)^{-1}\Bigl(\frac{\tilde{N}_n}{n}\hat{\bf{\Sigma}}_n-{\bf{\Sigma}}(\theta_0)\Bigr)\Bigr\},
\end{align*}
Theorem \ref{Qtheorem} and Proposition \ref{Nprop} imply
\begin{align*}
    &\quad\ \sup_{\theta\in\Theta}\Bigl|\frac{1}{n}{\bf{H}}_n(\theta)-{\bf{H}}(\theta)\Bigr|\\
    &\leq \sup_{\theta\in\Theta}\bigl|\log\det {\bf{\Sigma}}(\theta)\bigr|\Bigl|\frac{N_n}{n}-1\Bigr|+\sup_{\theta\in\Theta}\bigl|{\bf{\Sigma}}(\theta)^{-1}\bigr|
    \Bigl|\frac{\tilde{N}_n}{n}\hat{\bf{\Sigma}}_n-{\bf{\Sigma}}(\theta_0)\Bigr|
    \stackrel{p}{\longrightarrow} 0,
\end{align*}
which yields (\ref{Fsup}). Next, we show (\ref{F2sup}). Note that
\begin{align*}
    \frac{1}{n}\partial_{\theta^{(i)}}\partial_{\theta^{(j)}}{\bf{H}}_n(\theta)
    &=\frac{N_n}{2n}\tr\bigl\{({\bf{\Sigma}}(\theta)^{-1})(\partial_{\theta^{(i)}}{\bf{\Sigma}}(\theta))({\bf{\Sigma}}(\theta)^{-1})(\partial_{\theta^{(j)}}{\bf{\Sigma}}(\theta))\bigr\}\\
    &\quad-\frac{N_n}{2n}\tr\bigl\{({\bf{\Sigma}}(\theta)^{-1})(\partial_{\theta^{(i)}}\partial_{\theta^{(j)}}{\bf{\Sigma}}(\theta))\bigr\}\\
    &\quad -\frac{\tilde{N}_n}{2n}\tr\bigl\{({\bf{\Sigma}}(\theta)^{-1})(\partial_{\theta^{(i)}}{\bf{\Sigma}}(\theta))({\bf{\Sigma}}(\theta)^{-1})(\partial_{\theta^{(j)}}{\bf{\Sigma}}(\theta))({\bf{\Sigma}}(\theta)^{-1})\hat{{\bf{\Sigma}}}_{n}\bigr\}\\
    &\quad+\frac{\tilde{N}_n}{2n}\tr\bigl\{({\bf{\Sigma}}(\theta)^{-1})(\partial_{\theta^{(i)}}\partial_{\theta^{(j)}}{\bf{\Sigma}}(\theta))({\bf{\Sigma}}(\theta)^{-1})\hat{{\bf{\Sigma}}}_{n}\bigr\}\\
    &\quad-\frac{\tilde{N}_n}{2n}\tr\bigl\{({\bf{\Sigma}}(\theta)^{-1})(\partial_{\theta^{(j)}}{\bf{\Sigma}}(\theta))({\bf{\Sigma}}(\theta)^{-1})(\partial_{\theta^{(i)}}{\bf{\Sigma}}(\theta))({\bf{\Sigma}}(\theta)^{-1})\hat{{\bf{\Sigma}}}_{n}\bigr\}
\end{align*}
and
\begin{align*}
    \partial_{\theta^{(i)}}\partial_{\theta^{(j)}}{\bf{H}}(\theta)
    &=\frac{1}{2}\tr\bigl\{({\bf{\Sigma}}(\theta)^{-1})(\partial_{\theta^{(i)}}{\bf{\Sigma}}(\theta))({\bf{\Sigma}}(\theta)^{-1})(\partial_{\theta^{(j)}}{\bf{\Sigma}}(\theta))\bigr\}\\
    &\quad-\frac{1}{2}\tr\bigl\{({\bf{\Sigma}}(\theta)^{-1})(\partial_{\theta^{(i)}}\partial_{\theta^{(j)}}{\bf{\Sigma}}(\theta))\bigr\}\\
    &\quad-\frac{1}{2}\tr\bigl\{({\bf{\Sigma}}(\theta)^{-1})(\partial_{\theta^{(i)}}{\bf{\Sigma}}(\theta))({\bf{\Sigma}}(\theta)^{-1})(\partial_{\theta^{(j)}}{\bf{\Sigma}}(\theta))({\bf{\Sigma}}(\theta)^{-1})
    {\bf{\Sigma}}(\theta_0)\bigr\}\\
    &\quad+\frac{1}{2}\tr\bigl\{({\bf{\Sigma}}(\theta)^{-1})(\partial_{\theta^{(i)}}\partial_{\theta^{(j)}}{\bf{\Sigma}}(\theta))({\bf{\Sigma}}(\theta)^{-1}){\bf{\Sigma}}(\theta_0)\bigr\}\\
    &\quad-\frac{1}{2}\tr\bigl\{({\bf{\Sigma}}(\theta)^{-1})(\partial_{\theta^{(j)}}{\bf{\Sigma}}(\theta))({\bf{\Sigma}}(\theta)^{-1})(\partial_{\theta^{(i)}}{\bf{\Sigma}}(\theta))({\bf{\Sigma}}(\theta)^{-1})({\bf{\Sigma}}(\theta_0)\bigr\}
\end{align*}
for $i,j=1,\ldots,q$. Theorem \ref{Qtheorem} and Proposition \ref{Nprop} yield 
\begin{align*}
    &\quad\ \sup_{\theta\in\Theta}\Bigl|\frac{1}{n}\partial^2_{\theta}{\bf{H}}_n(\theta)-\partial^2_{\theta}{\bf{H}}(\theta)\Bigr|\\
    &\leq \sum_{i=1}^q\sum_{j=1}^q \sup_{\theta\in\Theta}\Bigl|\frac{1}{n}\partial_{\theta^{(i)}}\partial_{\theta^{(j)}}{\bf{H}}_n(\theta)-\partial_{\theta^{(i)}}\partial_{\theta^{(j)}}{\bf{H}}(\theta)\Bigr|\\
    &\leq \Bigl|\frac{N_n}{n}-1\Bigr|\sum_{i=1}^q\sum_{j=1}^q 
    \sup_{\theta\in\Theta}\bigl|({\bf{\Sigma}}(\theta)^{-1})(\partial_{\theta^{(i)}}{\bf{\Sigma}}(\theta))({\bf{\Sigma}}(\theta)^{-1})(\partial_{\theta^{(j)}}{\bf{\Sigma}}(\theta))\bigr|\\
    &\quad+\Bigl|\frac{N_n}{n}-1\Bigr|\sum_{i=1}^q\sum_{j=1}^q \sup_{\theta\in\Theta}\bigl|({\bf{\Sigma}}(\theta)^{-1})(\partial_{\theta^{(i)}}\partial_{\theta^{(j)}}{\bf{\Sigma}}(\theta))\bigr|\\
    &\quad+\sum_{i=1}^q\sum_{j=1}^q \sup_{\theta\in\Theta}\bigl|({\bf{\Sigma}}(\theta)^{-1})(\partial_{\theta^{(i)}}{\bf{\Sigma}}(\theta))({\bf{\Sigma}}(\theta)^{-1})(\partial_{\theta^{(j)}}{\bf{\Sigma}}(\theta))({\bf{\Sigma}}(\theta)^{-1})\bigr|
    \Bigl|\frac{\tilde{N}_n}{n}\hat{{\bf{\Sigma}}}_{n}-{\bf{\Sigma}}(\theta_0)\Bigr|\\
    &\quad+\sum_{i=1}^q\sum_{j=1}^q \sup_{\theta\in\Theta}\bigl|({\bf{\Sigma}}(\theta)^{-1})(\partial_{\theta^{(i)}}\partial_{\theta^{(j)}}{\bf{\Sigma}}(\theta))({\bf{\Sigma}}(\theta)^{-1})\bigr|\Bigl|\frac{\tilde{N}_n}{n}\hat{{\bf{\Sigma}}}_{n}-
    {\bf{\Sigma}}(\theta_0)\Bigr|\\
    &\quad+\sum_{i=1}^q\sum_{j=1}^q \sup_{\theta\in\Theta}\bigl|({\bf{\Sigma}}(\theta)^{-1})(\partial_{\theta^{(j)}}{\bf{\Sigma}}(\theta))({\bf{\Sigma}}(\theta)^{-1})(\partial_{\theta^{(i)}}{\bf{\Sigma}}(\theta))({\bf{\Sigma}}(\theta)^{-1})\bigr|
    \Bigl|\frac{\tilde{N}_n}{n}\hat{{\bf{\Sigma}}}_{n}-{\bf{\Sigma}}(\theta_0)\Bigr|\stackrel{p}{\longrightarrow} 0,
\end{align*}
so that we obtain (\ref{F2sup}).
\end{proof}
\begin{proof}[\textbf{Proof of Theorem \ref{thetatheorem}}]
First, we prove (\ref{thetacons}). {\bf{[B1]}} (a) implies that ${\bf{H}}(\theta)$ has a unique maximum value at $\theta=\theta_0$, so that for any $\varepsilon>0$, there exists $\delta>0$ such that
\begin{align*}
    |\theta-\theta_0|>\varepsilon \Longrightarrow {\bf{H}}(\theta_0)-{\bf{H}}(\theta)>\delta.
\end{align*}
Consequently, since ${\bf{H}}_n(\hat{\theta}_n)\geq {\bf{H}}_n(\theta_0)$, Proposition \ref{supHprop} shows
\begin{align*}
    {\bf{P}}\Bigl(|\hat{\theta}_n-\theta_0|>\varepsilon\Bigr)&\leq {\bf{P}}\Bigl({\bf{H}}(\theta_0)-{\bf{H}}(\hat{\theta}_n)>\delta\Bigr)\\
    &\leq {\bf{P}}\biggl({\bf{H}}(\theta_0)-\frac{1}{n}{\bf{H}}_n(\theta_0)
    >\frac{\delta}{3}\biggr)\\
    &\quad+{\bf{P}}\biggl(\frac{1}{n}{\bf{H}}_n(\theta_0)-\frac{1}{n}{\bf{H}}_n(\hat{\theta}_n)
    >\frac{\delta}{3}\biggr)+{\bf{P}}\biggl(\frac{1}{n}{\bf{H}}_n(\hat{\theta}_n)-{\bf{H}}(\hat{\theta}_n)>\frac{\delta}{3}\biggr)\\
    &\leq 2{\bf{P}}\biggl(\sup_{\theta\in\Theta}\Bigr|\frac{1}{n}
    {\bf{H}}_n(\theta)-{\bf{H}}(\theta)\Bigl|>\frac{\delta}{3}\biggr)\longrightarrow 0,
\end{align*}
which yields (\ref{thetacons}). Next, we show (\ref{thetaasym}). Let $\check{\theta}_{n,\lambda}=\theta_0+\lambda(\hat{\theta}_n-\theta_0)$  
and $A_{1,n}=\bigl\{\hat{\theta}_n\in{\rm{Int}}(\Theta)\bigr\}$.
Using Taylor's theorem, on $A_{1,n}$, we have
\begin{align*}
    0=\frac{1}{\sqrt{n}}\partial_{\theta}{\bf{H}}_n(\hat{\theta}_n)
    &=\frac{1}{\sqrt{n}}\partial_{\theta}{\bf{H}}_n(\theta_0)+
    \biggl(\frac{1}{n}\int_{0}^{1}\partial^2_{\theta}{\bf{H}}_n(\check{\theta}_{n,\lambda})d\lambda\biggr)\sqrt{n}\bigl(\hat{\theta}_n-\theta_0\bigr),
\end{align*}
so that
\begin{align}
   -\frac{1}{\sqrt{n}}\partial_{\theta}{\bf{H}}_n(\theta_0)
   =\biggl(\frac{1}{n}\int_{0}^{1}\partial^2_{\theta}{\bf{H}}_n(\check{\theta}_{n,\lambda})d\lambda\biggr)\sqrt{n}\bigl(
   \hat{\theta}_n-\theta_0\bigr).
   \label{Heq}
\end{align}
Since 
\begin{align*}
    \partial_{\theta^{(i)}}{\bf{H}}_n(\theta)=-\frac{N_n}{2}\tr{\bigl\{
    ({\bf{\Sigma}}(\theta)^{-1})
    (\partial_{\theta^{(i)}}{\bf{\Sigma}}(\theta))\bigr\}}+\frac{\tilde{N}_n}{2}\tr\bigl\{({\bf{\Sigma}}(\theta)^{-1})(\partial_{\theta^{(i)}}{\bf{\Sigma}}(\theta))({\bf{\Sigma}}(\theta)^{-1})\hat{\bf{\Sigma}}_n\bigr\},
\end{align*}
one gets
\begin{align*}
    \frac{1}{\sqrt{n}}\partial_{\theta^{(i)}}{\bf{H}}_n(\theta_0)
    &=\frac{\tilde{N}_n}{2n}\tr\bigl\{({\bf{\Sigma}}(\theta_0)^{-1})(\partial_{\theta^{(i)}}{\bf{\Sigma}}(\theta_0))({\bf{\Sigma}}(\theta_0)^{-1})\sqrt{n}(\hat{{\bf{\Sigma}}}_{n}-{\bf{\Sigma}}(\theta_0))\bigr\}\\
    &\qquad+\frac{1}{2}\sqrt{n}\Bigl(\frac{\tilde{N}_n}{n}-\frac{N_n}{n}\Bigr)\tr{\bigl\{({\bf{\Sigma}}(\theta_0)^{-1})
    (\partial_{\theta^{(i)}}{\bf{\Sigma}}(\theta_0))\bigr\}}
\end{align*}
for $i=1,\ldots,q$. Moreover, we see
\begin{align*}
    &\quad\ \tr\bigl\{({\bf{\Sigma}}(\theta_0)^{-1})(\partial_{\theta^{(i)}}{\bf{\Sigma}}(\theta_0))({\bf{\Sigma}}(\theta_0)^{-1})\sqrt{n}(\hat{{\bf{\Sigma}}}_{n}-{\bf{\Sigma}}(\theta_0))\bigr\}\\
    &=\bigl\{\vec{\partial_{\theta^{(i)}}{\bf{\Sigma}}(\theta_0)}\bigr\}^{\top}\bigl({\bf{\Sigma}}(\theta_0)^{-1}\otimes{\bf{\Sigma}}(\theta_0)^{-1}\bigr)\sqrt{n}(\vec \hat{{\bf{\Sigma}}}_{n}-\vec {\bf{\Sigma}}(\theta_0))\\
    &=\bigl\{\vech{\partial_{\theta^{(i)}}{\bf{\Sigma}}(\theta_0)}\bigr\}^{\top}\mathbb{D}_p^{\top}\bigl({\bf{\Sigma}}(\theta_0)\otimes{\bf{\Sigma}}(\theta_0)\bigr)^{-1}\mathbb{D}_p\sqrt{n}(\vech \hat{{\bf{\Sigma}}}_{n}-\vech {\bf{\Sigma}}(\theta_0))\\
    &=2\bigl\{\partial_{\theta^{(i)}}\vech{{\bf{\Sigma}}(\theta_0)}\bigr\}^{\top}{\bf{W}}_0^{-1}\sqrt{n} (\vech {\hat{\bf\Sigma}}_{n}-\vech {\bf{\Sigma}}(\theta_0))\\
    &=\bigl(2\Delta_0^{\top}{\bf{W}}_0^{-1}\sqrt{n}(\vech \hat{{\bf{\Sigma}}}_{n}-\vech {\bf{\Sigma}}(\theta_0))\bigr)^{(i)}
\end{align*}
and Proposition \ref{Nprop} shows
\begin{align}
     \sqrt{n}\Bigl(\frac{\tilde{N}_n}{n}-\frac{N_n}{n}\Bigr)&=\sqrt{n}\Bigl(\frac{\tilde{N}_n}{n}-1\Bigr)-\sqrt{n}\Bigl(\frac{N_n}{n}-1\Bigr)=o_p(1), \label{Rn}
\end{align}
so that Proposition \ref{Nprop} and Theorem \ref{Qtheorem} imply
\begin{align}
\begin{split}
    -\frac{1}{\sqrt{n}}\partial_{\theta}{\bf{H}}_n(\theta_0)
    &=-\frac{\tilde{N}_n}{n}\Delta_0^{\top}{\bf{W}}_0^{-1}\sqrt{n}(\vech \hat{{\bf{\Sigma}}}_{n}-\vech {\bf{\Sigma}}(\theta_0))+o_p(1)\\
    &\qquad\qquad\qquad\qquad\qquad\qquad\qquad\stackrel{d}{\longrightarrow}N_{q}\bigl(0,\Delta_0^{\top}{\bf{W}}_0^{-1}\Delta_0\bigr). \label{sqrtHprob}
\end{split}
\end{align}
Similarly, it is shown that
\begin{align*}
    \partial_{\theta^{(i)}}\partial_{\theta^{(j)}}{\bf{H}}(\theta_0)&=
    -\frac{1}{2}
    \tr\bigl\{({\bf{\Sigma}}(\theta_0)^{-1})(\partial_{\theta^{(j)}}{\bf{\Sigma}}(\theta_0))({\bf{\Sigma}}(\theta_0)^{-1})(\partial_{\theta^{(i)}}{\bf{\Sigma}}(\theta_0))\bigr\}\\
    &=-\bigl\{\partial_{\theta^{(i)}}\vech{{\bf{\Sigma}}(\theta_0)}\bigr\}^{\top}{\bf{W}}_0^{-1}\bigl\{\partial_{\theta^{(j)}}\vech{{\bf{\Sigma}}(\theta_0)}\bigr\}
    =\bigl(-\Delta_0^{\top}{\bf{W}}_0^{-1}\Delta_0\bigr)_{ij}
\end{align*}
for $i,j=1,\ldots,q$, which yields
\begin{align*}
    \partial_{\theta}^2 {\bf{H}}(\theta_0)=-\Delta_0^{\top}{\bf{W}}_0^{-1}\Delta_0.
\end{align*}
As $\partial_{\theta}^2 {\bf{H}}(\theta)$ is continuous in $\theta$, one has
\begin{align*}
    \sup_{\theta\in B_n}\bigl|
     \partial^2_{\theta}{\bf{H}}(\theta)-\partial^2_{\theta}{\bf{H}}(\theta_0)\bigr|\longrightarrow 0
\end{align*}
as $n\longrightarrow\infty$, where $\{\rho_n\}_{n\in\mathbb{N}}$ is a sequence such that $\rho_n\longrightarrow 0$ as $n\longrightarrow\infty$, and
\begin{align*}
    B_n=\bigl\{\theta\in\Theta;\ |\theta-\theta_0|\leq \rho_n\bigr\}.
\end{align*}
Hence, Proposition \ref{supHprop} and (\ref{thetacons}) show that for any $\varepsilon>0$,
\begin{align*}
    &\quad\ {\bf{P}}\Biggl(\biggl|\frac{1}{n}\int_{0}^{1}\partial^2_{\theta}{\bf{H}}_n(\check{\theta}_{n,\lambda})d\lambda+\Delta_0^{\top}{\bf{W}}_0^{-1}\Delta_0\biggr|>\varepsilon\Biggr)\\
    &={\bf{P}}\Biggl(\biggl\{\biggl|\int_{0}^{1}\Bigl\{\frac{1}{n}
    \partial^2_{\theta}{\bf{H}}_n(\check{\theta}_{n,\lambda})d\lambda-\partial^2_{\theta}{\bf{H}}(\theta_0)
    \Bigr\}d\lambda\biggr|>\varepsilon\biggr\}\cap \Bigl\{|\hat{\theta}_n-\theta_0|\leq \rho_n\Bigr\}\Biggr)\\
    &\qquad+{\bf{P}}\Biggl(\biggl\{\biggl|\int_{0}^{1}\Bigl\{\frac{1}{n}
    \partial^2_{\theta}{\bf{H}}_n(\check{\theta}_{n,\lambda})d\lambda-\partial^2_{\theta}{\bf{H}}(\theta_0)
    \Bigr\}d\lambda\biggr|>\varepsilon\biggr\}\cap \Bigl\{|\hat{\theta}_n-\theta_0|>\rho_n\Bigr\}\Biggr)\\
    &\leq {\bf{P}}\biggl(\sup_{\theta\in B_n}\Bigl|
    \frac{1}{n}
    \partial^2_{\theta}{\bf{H}}_n(\theta)-\partial^2_{\theta}{\bf{H}}(\theta_0)
     \Bigr|>\varepsilon\biggr)+{\bf{P}}\Bigl(|\hat{\theta}_n-\theta_0|> \rho_n\Bigr)\\
    &\leq  {\bf{P}}\biggl(\sup_{\theta\in\Theta}\Bigl|
    \frac{1}{n}
    \partial^2_{\theta}{\bf{H}}_n(\theta)-\partial^2_{\theta}{\bf{H}}(\theta)
     \Bigr|>\frac{\varepsilon}{2}\biggr)\\
     &\qquad+{\bf{P}}\biggl(\sup_{\theta\in B_n}
     \bigl|
    \partial^2_{\theta}{\bf{H}}(\theta)-\partial^2_{\theta}{\bf{H}}(\theta_0)
     \bigr|>\frac{\varepsilon}{2}\biggr)+{\bf{P}}\Bigl(|\hat{\theta}_n-\theta_0|> \rho_n\Bigr)
    \longrightarrow 0,
\end{align*}
so that
\begin{align}
    \frac{1}{n}\int_{0}^{1}\partial^2_{\theta}{\bf{H}}_n(\check{\theta}_{n,\lambda})d\lambda\stackrel{p}{\longrightarrow}-\Delta_0^{\top}{\bf{W}}_0^{-1}\Delta_0.
    \label{H2prob}
\end{align}
Set 
\begin{align*}
    B_{1,n}=\frac{1}{\sqrt{n}}\partial_{\theta}{\bf{H}}_n(\theta_0),\quad 
    B_{2,n}=-\frac{1}{n}\int_{0}^{1}\partial^2_{\theta}{\bf{H}}_n(\check{\theta}_{n, \lambda})d\lambda,\quad 
    \tilde{B}_{2,n}=\left\{
    \begin{array}{ll}
    B_{2,n} & \bigl({\rm{on}}\ A_{2,n}\bigr),\\
    \mathbb{I}_{q} & \bigl({\rm{on}}\ A_{2,n}^c\bigr),
    \end{array}
    \right.
\end{align*}
where $A_{2,n}=\bigl\{\det B_{2,n}>0\bigr\}$. Note that (\ref{thetacons}) and (\ref{H2prob}) imply
\begin{align*}
    {\bf{P}}\bigl(A_{1,n}\bigr)\longrightarrow 1,\quad {\bf{P}}\bigl(A_{2,n}\bigr)\longrightarrow 1
\end{align*}
since $\theta_0\in {\rm{Int}}(\Theta)$ and $\Delta_0^{\top}{\bf{W}}_0^{-1}\Delta_0$ is positive. In a similar way to Proposition \ref{Nprop}, (\ref{H2prob}) yields 
\begin{align*}
    \tilde{B}_{2,n}^{-1} \stackrel{p}{\longrightarrow}(\Delta_0^{\top}{\bf{W}}_0^{-1}\Delta_0)^{-1}.
\end{align*}
By Slutsky’s theorem, it holds from (\ref{sqrtHprob}) that
\begin{align*}
    \tilde{B}_{2,n}^{-1}B_{1,n}\stackrel{d}{\longrightarrow}N_q\Bigl(0,(\Delta_0^{\top}{\bf{W}}_0^{-1}\Delta_0)^{-1}\Bigr).
\end{align*}
Thus, for any closed set $C\in\mathbb{R}^q$, it follows from (\ref{Heq}) that
\begin{align*}
    &\quad\ \limsup_{n\longrightarrow\infty}{\bf{P}}\Bigl(\sqrt{n}(\hat{\theta}_n-\theta_0)\in C\Bigr)\\
    &= \limsup_{n\longrightarrow\infty}{\bf{P}}\Bigl(\bigl\{\sqrt{n}(\hat{\theta}_n-\theta_0)\in C\bigr\}\cap (A_{1,n}\cap A_{2,n})\Bigr)\\
    &\qquad+\limsup_{n\longrightarrow\infty}{\bf{P}}\Bigl(\bigl\{\sqrt{n}(\hat{\theta}_n-\theta_0)\in C\bigr\}\cap (A_{1,n}\cap A_{2,n})^c\Bigr)\\
    &\leq \limsup_{n\longrightarrow\infty}{\bf{P}}\Bigl(\bigl\{B_{2,n}^{-1}B_{1,n}\in C\bigr\}\cap (A_{1,n}\cap A_{2,n})\Bigr)+\limsup_{n\longrightarrow\infty}{\bf{P}}\bigl(A_{1,n}^c\cup A_{2,n}^c\bigr)\\
    &\leq \limsup_{n\longrightarrow\infty} {\bf{P}}\Bigl(\tilde{B}_{2,n}^{-1}B_{1,n}\in C\bigr)+\limsup_{n\longrightarrow\infty}{\bf{P}}\bigl(A_{1,n}^c\bigr)+\limsup_{n\longrightarrow\infty}{\bf{P}}\bigl(A_{2,n}^c\bigr)\\
    &\leq {\bf{P}}\Bigl((\Delta_0^{\top}{\bf{W}}_0^{-1}\Delta_0)^{-\frac{1}{2}}Z_q\in C\bigr),
\end{align*}
which yields (\ref{thetaasym}).
\end{proof}
\begin{lemma}\label{thetalemma}
Suppose that {\bf{[A1]}}-{\bf{[A4]}} and {\bf{[B1]}} hold. Then, under $H_0$, 
\begin{align*}
    \sqrt{n}(\hat{\theta}_n-\theta_0)
    &=(\Delta_0^{\top}{\bf{W}}_0^{-1}\Delta_0)^{-1}\Delta_0^{\top}{\bf{W}}_0^{-1}\sqrt{n}(\vech \hat{{\bf{\Sigma}}}_n-\vech {\bf{\Sigma}}(\theta_0))+o_p(1) 
\end{align*}
as $n\longrightarrow\infty$.
\end{lemma}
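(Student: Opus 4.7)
The plan is to extract a Bahadur-type linear representation for $\hat{\theta}_n$ by revisiting the equation (\ref{Heq}) derived in the proof of Theorem \ref{thetatheorem} and solving it explicitly for $\sqrt{n}(\hat{\theta}_n-\theta_0)$. All the analytic ingredients have already been established; the lemma is essentially a byproduct that is made explicit.

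First, I would work on the event $A_{1,n}\cap A_{2,n}$, where $A_{1,n}=\{\hat{\theta}_n\in{\rm Int}(\Theta)\}$ and $A_{2,n}=\{\det B_{2,n}>0\}$, with $B_{2,n}=-n^{-1}\int_{0}^{1}\partial^2_{\theta}{\bf{H}}_n(\check{\theta}_{n,\lambda})d\lambda$. By the consistency result (\ref{thetacons}) together with the convergence (\ref{H2prob}) and the positive definiteness of $\Delta_0^{\top}{\bf{W}}_0^{-1}\Delta_0$ (guaranteed by {\bf{[B1]}}), we have ${\bf{P}}_{\theta_0}(A_{1,n}\cap A_{2,n})\longrightarrow 1$, so it suffices to derive the identity on this event up to $o_p(1)$.

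On $A_{1,n}\cap A_{2,n}$, equation (\ref{Heq}) can be inverted to yield
\begin{align*}
\sqrt{n}(\hat{\theta}_n-\theta_0) = B_{2,n}^{-1}\cdot\frac{1}{\sqrt{n}}\partial_{\theta}{\bf{H}}_n(\theta_0).
\end{align*}
From the proof of Theorem \ref{thetatheorem} we already have the two key expansions
\begin{align*}
\frac{1}{\sqrt{n}}\partial_{\theta}{\bf{H}}_n(\theta_0) = \frac{\tilde{N}_n}{n}\,\Delta_0^{\top}{\bf{W}}_0^{-1}\sqrt{n}(\vech\hat{\bf{\Sigma}}_n-\vech{\bf{\Sigma}}(\theta_0))+o_p(1),\qquad B_{2,n}\stackrel{p}{\longrightarrow}\Delta_0^{\top}{\bf{W}}_0^{-1}\Delta_0.
\end{align*}
Consequently $B_{2,n}^{-1}\stackrel{p}{\longrightarrow}(\Delta_0^{\top}{\bf{W}}_0^{-1}\Delta_0)^{-1}$ on $A_{2,n}$, and $\tilde{N}_n/n\stackrel{p}{\longrightarrow}1$ by Proposition \ref{Nprop}. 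Combining these with the fact that $\sqrt{n}(\vech\hat{\bf{\Sigma}}_n-\vech{\bf{\Sigma}}(\theta_0))=O_p(1)$ under $H_0$ by Theorem \ref{Qtheorem}, Slutsky's theorem delivers
\begin{align*}
\sqrt{n}(\hat{\theta}_n-\theta_0)=(\Delta_0^{\top}{\bf{W}}_0^{-1}\Delta_0)^{-1}\Delta_0^{\top}{\bf{W}}_0^{-1}\sqrt{n}(\vech\hat{\bf{\Sigma}}_n-\vech{\bf{\Sigma}}(\theta_0))+o_p(1)
\end{align*}
on $A_{1,n}\cap A_{2,n}$. Finally, to promote this to an unconditional statement, I would use the standard device already employed in the proof of Theorem \ref{thetatheorem}: for any $\varepsilon>0$, the probability that the difference of the two sides exceeds $\varepsilon$ is bounded by the probability of the corresponding event on $A_{1,n}\cap A_{2,n}$ plus ${\bf{P}}_{\theta_0}(A_{1,n}^c\cup A_{2,n}^c)$, and both terms vanish as $n\longrightarrow\infty$. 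There is no real obstacle here beyond bookkeeping; the only delicate point—already addressed in Theorem \ref{thetatheorem}—is guaranteeing the invertibility of $B_{2,n}$, which is why the truncation via $A_{2,n}$ is introduced.
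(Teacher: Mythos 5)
Your proposal is correct and follows essentially the same route as the paper: invert the Taylor-expansion identity on $A_{1,n}\cap A_{2,n}$, plug in the expansion of $n^{-1/2}\partial_\theta{\bf{H}}_n(\theta_0)$ and the limit of $B_{2,n}$, use Proposition \ref{Nprop}, Theorem \ref{Qtheorem} and Slutsky, and then pass to the unconditional statement via ${\bf{P}}_{\theta_0}(A_{1,n}\cap A_{2,n})\to 1$. The only cosmetic difference is that the paper works with the always-invertible modification $\tilde{B}_{2,n}$ to make the convergence of the inverse precise, which is exactly the bookkeeping your truncation remark anticipates.
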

\begin{proof}
Recall that $A_{1,n}=\bigl\{\hat{\theta}_n\in{{\rm{Int}}(\Theta)}\bigr\}$, $A_{2,n}=\bigl\{\det B_{2,n}>0\bigr\}$ and
\begin{align*}
    B_{1,n}=\frac{1}{\sqrt{n}}\partial_{\theta}{\bf{H}}_n(\theta_0),\quad B_{2,n}=-\frac{1}{n}\int_{0}^{1}\partial^2_{\theta}{\bf{H}}_n(\check{\theta}_{n, \lambda})d\lambda,\quad 
    \tilde{B}_{2,n}=\left\{
    \begin{array}{ll}
    B_{2,n} & \bigl({\rm{on}}\ A_{2,n}\bigr),\\
    \mathbb{I}_{q} & \bigl({\rm{on}}\ A_{2,n}^c\bigr),
    \end{array}
    \right.
\end{align*}
where $\check{\theta}_{n,\lambda}=\theta_0+\lambda(\hat{\theta}_n-\theta_0)$.
In a similar manner to the proof of Theorem \ref{thetatheorem}, under $H_0$, we have
\begin{align}
    \tilde{B}_{2,n}^{-1}\stackrel{p}{\longrightarrow}(\Delta_0^{\top}{\bf{W}}_0^{-1}\Delta_0)^{-1} \label{Bprob}
\end{align}
and 
\begin{align*}
\begin{split}
    B_{1,n}=\frac{1}{\sqrt{n}}\partial_{\theta}{\bf{H}}_n(\theta_0)
    &=\frac{\tilde{N}_n}{n}\Delta_0^{\top}{\bf{W}}_0^{-1}\sqrt{n}(\vech \hat{{\bf{\Sigma}}}_{n}-\vech {\bf{\Sigma}}(\theta_0))+R_n,
\end{split}
\end{align*}
where 
\begin{align*}
    R_n^{(i)}=\frac{1}{2}\sqrt{n}\Bigl(\frac{\tilde{N}_n}{n}-\frac{N_n}{n}\Bigr)\tr{\bigl\{({\bf{\Sigma}}(\theta_0)^{-1})
    (\partial_{\theta^{(i)}}{\bf{\Sigma}}(\theta_0))\bigr\}}
\end{align*} 
for $i=1,\ldots,q$. 
It follows from (\ref{Rn}) that
\begin{align}
    R_n=o_p(1) \label{Rprob}
\end{align}
under $H_0$. Furthermore, it is shown that
\begin{align*}
    \sqrt{n}(\hat{\theta}_n-\theta_0)&=B_{2,n}^{-1}B_{1,n}\\
    &=\frac{\tilde{N}_n}{n}B_{2,n}^{-1}\Delta_0^{\top}{\bf{W}}_0^{-1}\sqrt{n}(\vech \hat{{\bf{\Sigma}}}_{n}-\vech {\bf{\Sigma}}(\theta_0))+B_{2,n}^{-1}R_n
\end{align*}
on $A_{1,n}\cap A_{2,n}$, so that 
\begin{align*}
    &\quad\ \sqrt{n}(\hat{\theta}_n-\theta_0)-(\Delta_0^{\top}{\bf{W}}_0^{-1}\Delta_0)^{-1}\Delta_0^{\top}{\bf{W}}_0^{-1}\sqrt{n}(\vech \hat{{\bf{\Sigma}}}_n-\vech {\bf{\Sigma}}(\theta_0))\\
    &=\Bigl(\frac{\tilde{N}_n}{n}B_{2,n}^{-1}-(\Delta_0^{\top}{\bf{W}}_0^{-1}\Delta_0)^{-1}\Bigr)\Delta_0^{\top}{\bf{W}}_0^{-1}\sqrt{n}(\vech \hat{{\bf{\Sigma}}}_n-\vech {\bf{\Sigma}}(\theta_0))+B_{2,n}^{-1}R_n\\
    &=\Bigl(\frac{\tilde{N}_n}{n}\tilde{B}_{2,n}^{-1}-(\Delta_0^{\top}{\bf{W}}_0^{-1}\Delta_0)^{-1}\Bigr)\Delta_0^{\top}{\bf{W}}_0^{-1}\sqrt{n}(\vech \hat{{\bf{\Sigma}}}_n-\vech {\bf{\Sigma}}(\theta_0))+\tilde{B}_{2,n}^{-1}R_n
\end{align*}
on $A_{1,n}\cap A_{2,n}$. Under $H_0$, it holds from Proposition \ref{Nprop}, Theorem \ref{Qtheorem}, (\ref{Bprob}) and (\ref{Rprob}) that
\begin{align*}
    \Bigl(\frac{\tilde{N}_n}{n}\tilde{B}_{2,n}^{-1}-(\Delta_0^{\top}{\bf{W}}_0^{-1}\Delta_0)^{-1}\Bigr)\Delta_0^{\top}{\bf{W}}_0^{-1}\sqrt{n}(\vech \hat{{\bf{\Sigma}}}_n-\vech {\bf{\Sigma}}(\theta_0))+\tilde{B}_{2,n}^{-1}R_n=o_p(1).
\end{align*}
Therefore, since
\begin{align*}
    {\bf{P}}\bigl(A_{1,n}\cap A_{2,n}\bigr)\longrightarrow 1
\end{align*}
under $H_0$, for any $\varepsilon>0$, we have
\begin{align*}
    &\quad\ {\bf{P}}\biggl(\bigl|\sqrt{n}(\hat{\theta}_n-\theta_0)-(\Delta_0^{\top}{\bf{W}}_0^{-1}\Delta_0)^{-1}\Delta_0^{\top}{\bf{W}}_0^{-1}\sqrt{n}(\vech \hat{{\bf{\Sigma}}}_n-\vech {\bf{\Sigma}}(\theta_0))\bigr|>\varepsilon\biggr)\\
    &\leq {\bf{P}}\biggl(\Bigl\{\Bigl|\Bigl(\frac{\tilde{N}_n}{n}\tilde{B}_{2,n}^{-1}-(\Delta_0^{\top}{\bf{W}}_0^{-1}\Delta_0)^{-1}\Bigr)\Delta_0^{\top}{\bf{W}}_0^{-1}\sqrt{n}(\vech \hat{{\bf{\Sigma}}}_n-\vech {\bf{\Sigma}}(\theta_0))+\tilde{B}_{2,n}^{-1}R_n\Bigr|>\varepsilon\biggr\}\\
    &\qquad\qquad\qquad\qquad\qquad\qquad\qquad\qquad\qquad
    \qquad\qquad\qquad\quad\cap (A_{1,n}\cap A_{2,n}) \biggr)+{\bf{P}}\Bigl((A_{1,n}\cap A_{2,n})^c\Bigr)\\
    &\leq  {\bf{P}}\biggl(\Bigl|\Bigl(\frac{\tilde{N}_n}{n}\tilde{B}_{2,n}^{-1}-(\Delta_0^{\top}{\bf{W}}_0^{-1}\Delta_0)^{-1}\Bigr)\Delta_0^{\top}{\bf{W}}_0^{-1}\sqrt{n}(\vech \hat{{\bf{\Sigma}}}_n-\vech {\bf{\Sigma}}(\theta_0))+\tilde{B}_{2,n}^{-1}R_n\Bigr|>\varepsilon\biggr)\\
    &\qquad\qquad\qquad\qquad\qquad\qquad\qquad\qquad\qquad
    \qquad\qquad\qquad\qquad\qquad\qquad+{\bf{P}}\Bigl((A_{1,n}\cap A_{2,n})^c\Bigr)\longrightarrow 0
\end{align*}
under $H_0$, which completes the proof.
\end{proof}
\begin{lemma}\label{testlemma}
Suppose that {\bf{[A1]}}-{\bf{[A4]}} and {\bf{[B1]}} hold. Then, under $H_0$,
\begin{align*}
    &\quad\ \sqrt{n}(\vech {\bf{\Sigma}}(\hat{\theta}_n)-\vech{\bf{\Sigma}}(\theta_0))\\
    &=\Delta_0(\Delta_0^{\top}{\bf{W}}_0^{-1}\Delta_0)^{-1}\Delta_0^{\top}{\bf{W}}_0^{-1}\sqrt{n}(\vech \hat{{\bf{\Sigma}}}_n-\vech {\bf{\Sigma}}(\theta_0))+o_p(1)
\end{align*}
as $n\longrightarrow\infty$.
\end{lemma}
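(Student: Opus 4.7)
The plan is to reduce the statement to Lemma \ref{thetalemma} via a first-order Taylor expansion of the smooth map $\theta\mapsto\vech{\bf{\Sigma}}(\theta)$ around $\theta_0$. Since ${\bf{\Sigma}}(\theta)$ is twice continuously differentiable in $\theta$ (as noted earlier in the paper), so is $\vech{\bf{\Sigma}}(\theta)$, and the integral form of Taylor's theorem gives
\begin{align*}
    \sqrt{n}\bigl(\vech{\bf{\Sigma}}(\hat{\theta}_n)-\vech{\bf{\Sigma}}(\theta_0)\bigr)
    =\Bigl(\int_{0}^{1}\Delta(\check{\theta}_{n,\lambda})\,d\lambda\Bigr)\sqrt{n}(\hat{\theta}_n-\theta_0),
\end{align*}
where $\check{\theta}_{n,\lambda}=\theta_0+\lambda(\hat{\theta}_n-\theta_0)$ and $\Delta(\theta)=\partial_{\theta^{\top}}\vech{\bf{\Sigma}}(\theta)$, so that $\Delta(\theta_0)=\Delta_0$.

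First I would argue that $\int_{0}^{1}\Delta(\check{\theta}_{n,\lambda})\,d\lambda\stackrel{p}{\longrightarrow}\Delta_0$ under $H_0$. This follows from the consistency $\hat{\theta}_n\stackrel{p}{\longrightarrow}\theta_0$ (Theorem \ref{thetatheorem}) together with continuity of $\Delta(\theta)$: for an arbitrary $\varepsilon>0$ one splits on the event $\{|\hat{\theta}_n-\theta_0|\leq\rho_n\}$ for a deterministic $\rho_n\downarrow 0$ on which $\sup_{|\theta-\theta_0|\leq\rho_n}|\Delta(\theta)-\Delta_0|\to 0$ by uniform continuity on a compact neighborhood of $\theta_0$, exactly as in the treatment of $\partial_{\theta}^2{\bf{H}}_n$ in the proof of Theorem \ref{thetatheorem}.

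Next I would invoke Lemma \ref{thetalemma} and Theorem \ref{Qtheorem}. Lemma \ref{thetalemma} gives
\begin{align*}
    \sqrt{n}(\hat{\theta}_n-\theta_0)=(\Delta_0^{\top}{\bf{W}}_0^{-1}\Delta_0)^{-1}\Delta_0^{\top}{\bf{W}}_0^{-1}\sqrt{n}\bigl(\vech\hat{\bf{\Sigma}}_n-\vech{\bf{\Sigma}}(\theta_0)\bigr)+o_p(1),
\end{align*}
while Theorem \ref{Qtheorem} implies $\sqrt{n}(\vech\hat{\bf{\Sigma}}_n-\vech{\bf{\Sigma}}(\theta_0))=O_p(1)$ under $H_0$. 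Combining the Taylor expansion with these two facts and with $\int_{0}^{1}\Delta(\check{\theta}_{n,\lambda})\,d\lambda=\Delta_0+o_p(1)$, Slutsky's theorem yields
\begin{align*}
    \sqrt{n}\bigl(\vech{\bf{\Sigma}}(\hat{\theta}_n)-\vech{\bf{\Sigma}}(\theta_0)\bigr)
    =\Delta_0(\Delta_0^{\top}{\bf{W}}_0^{-1}\Delta_0)^{-1}\Delta_0^{\top}{\bf{W}}_0^{-1}\sqrt{n}\bigl(\vech\hat{\bf{\Sigma}}_n-\vech{\bf{\Sigma}}(\theta_0)\bigr)+o_p(1),
\end{align*}
which is the claim.

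No step is especially deep here, since the heavy lifting was done in Lemma \ref{thetalemma}; the main technical point requiring care is justifying the convergence $\int_{0}^{1}\Delta(\check{\theta}_{n,\lambda})\,d\lambda\stackrel{p}{\longrightarrow}\Delta_0$, which in turn relies on continuity of $\partial_{\theta^{\top}}\vech{\bf{\Sigma}}(\theta)$ together with the consistency of $\hat{\theta}_n$ and compactness of $\Theta$. Once this is in hand, the remainder of the argument is a direct substitution.
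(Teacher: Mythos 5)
Your proposal is correct and follows essentially the same route as the paper's proof: an integral-form Taylor expansion of $\vech{\bf{\Sigma}}(\theta)$ at $\theta_0$, convergence of the integrated Jacobian to $\Delta_0$ via consistency of $\hat{\theta}_n$ and continuity of $\partial_{\theta^{\top}}\vech{\bf{\Sigma}}(\theta)$ (argued as in the proof of Theorem \ref{thetatheorem}), and then substitution of Lemma \ref{thetalemma} together with the tightness $\sqrt{n}(\vech\hat{\bf{\Sigma}}_n-\vech{\bf{\Sigma}}(\theta_0))=O_p(1)$ from Theorem \ref{Qtheorem}. No gaps to report.
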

\begin{proof}
Let $\sigma(\theta)=\vech{\bf{\Sigma}}(\theta)$. Using Taylor's Theorem, we get
\begin{align*}
    \sigma(\hat{\theta}_n)=\sigma(\theta_0)+\biggl(\int_0^{1}\frac{\partial}{\partial\theta^{\top}}\sigma(\theta_0+\lambda(\hat{\theta}_n-\theta_0))d\lambda\biggr)(\hat{\theta}_n-\theta_0),
\end{align*}
which yields 
\begin{align*}
    \sqrt{n}(\vech {\bf{\Sigma}}(\hat{\theta}_n)-\vech {\bf{\Sigma}}(\theta_0))
    &=\biggl(\int_0^{1}\frac{\partial}{\partial\theta^{\top}}\sigma(\theta_0+\lambda(\hat{\theta}_n-\theta_0))d\lambda\biggr)\sqrt{n}(\hat{\theta}_n-\theta_0).
\end{align*}
In a similar manner to the proof of Theorem \ref{thetatheorem}, it is shown that 
\begin{align*}
    \int_0^{1}\frac{\partial}{\partial\theta^{\top}}\sigma(\theta_0+\lambda(\hat{\theta}_n-\theta_0))d\lambda\stackrel{p}{\longrightarrow} \frac{\partial}{\partial\theta^{\top}}\sigma(\theta_0)=\Delta_0
\end{align*}
under $H_0$. Moreover, Lemma \ref{thetalemma} yields
\begin{align*}
    \sqrt{n}(\hat{\theta}_n-\theta_0)
    &=(\Delta_0^{\top}{\bf{W}}_0^{-1}\Delta_0)^{-1}\Delta_0^{\top}{\bf{W}}_0^{-1}\sqrt{n}(\vech \hat{{\bf{\Sigma}}}_n-\vech {\bf{\Sigma}}(\theta_0))+o_p(1) 
\end{align*}
under $H_0$. Consequently, since 
\begin{align*}
    \sqrt{n}(\vech \hat{{\bf{\Sigma}}}_n-\vech {\bf{\Sigma}}(\theta_0))=O_p(1)
\end{align*}
under $H_0$, we obtain
\begin{align*}
    &\quad\ \sqrt{n}(\vech {\bf{\Sigma}}(\hat{\theta}_n)-\vech {\bf{\Sigma}}(\theta_0))\\
    &=\Delta_0(\Delta_0^{\top}{\bf{W}}_0^{-1}\Delta_0)^{-1}\Delta_0^{\top}{\bf{W}}_0^{-1}\sqrt{n}(\vech \hat{{\bf{\Sigma}}}_n-\vech {\bf{\Sigma}}(\theta_0))+o_p(1)
\end{align*}
under $H_0$, which completes the proof.
\end{proof}
\begin{proof}[\bf{Proof of Theorem \ref{testtheorem1}}]
First, we see
\begin{align*}
     {\bf{T}}_n=-2{\bf{H}}_{n}({\bf{\Sigma}}(\hat{\theta}_n))+2{\bf{H}}_{n}(\hat{\bf{\Sigma}}_n)
\end{align*}
on $J_n$. Note that 
\begin{align*}
    \partial_{\sigma^{(i)}}{\bf{H}}_n({\bf{\Sigma}})&=-\frac{N_n}{2}\tr \bigl\{
    ({\bf{\Sigma}}^{-1})(\partial_{\sigma^{(i)}}{\bf{\Sigma}})
    \bigr\}+\frac{N_n}{2}\tr\bigl\{({\bf{\Sigma}}^{-1})(\partial_{\sigma^{(i)}}{\bf{\Sigma}})({\bf{\Sigma}}^{-1})\hat{\bf{\Sigma}}_n\bigr\}
\end{align*}
and
\begin{align*}
    \partial_{\sigma^{(i)}}\partial_{\sigma^{(j)}}{\bf{H}}_n({\bf{\Sigma}})&=\frac{N_n}{2}\tr \bigl\{
    ({\bf{\Sigma}}^{-1})(\partial_{\sigma^{(i)}}{\bf{\Sigma}})({\bf{\Sigma}}^{-1})(\partial_{\sigma^{(j)}}{\bf{\Sigma}})
    \bigr\}-\frac{N_n}{2}\tr \bigl\{
    ({\bf{\Sigma}}^{-1})(\partial_{\sigma^{(i)}}\partial_{\sigma^{(j)}}{\bf{\Sigma}})
    \bigr\}\\
    &\quad-\frac{N_n}{2}\tr\bigl\{({\bf{\Sigma}}^{-1})(\partial_{\sigma^{(i)}}{\bf{\Sigma}})({\bf{\Sigma}}^{-1})(\partial_{\sigma^{(j)}}{\bf{\Sigma}})({\bf{\Sigma}}^{-1})\hat{\bf{\Sigma}}_n\bigr\}\\
    &\quad+\frac{N_n}{2}\tr\bigl\{({\bf{\Sigma}}^{-1})(\partial_{\sigma^{(i)}}\partial_{\sigma^{(j)}}{\bf{\Sigma}})({\bf{\Sigma}}^{-1})\hat{\bf{\Sigma}}_n\bigr\}\\
    &\quad-\frac{N_n}{2}\tr\bigl\{({\bf{\Sigma}}^{-1})(\partial_{\sigma^{(j)}}{\bf{\Sigma}})({\bf{\Sigma}}^{-1})(\partial_{\sigma^{(i)}}{\bf{\Sigma}})({\bf{\Sigma}}^{-1})\hat{\bf{\Sigma}}_n\bigr\}
\end{align*}
for $i,j=1,\ldots,\bar{p}$, where $\sigma=\vech{\bf{\Sigma}}$. Using Taylor's Theorem, one gets
\begin{align*}
   {\bf{H}}_{n}({\bf{\Sigma}}(\hat{\theta}_n))&={{\bf{H}}}_{n}(\hat{\bf{\Sigma}}_n)+\partial_{\sigma} {\bf{H}}_{n}(\hat{\bf{\Sigma}}_n)^{\top}(\vech {\bf{\Sigma}}(\hat{\theta}_n)-\vech \hat{\bf{\Sigma}}_n)\\
    &\quad-\frac{1}{2}\sqrt{n}(\vech {\bf{\Sigma}}(\hat{\theta}_n)-\vech \hat{\bf{\Sigma}}_n)^{\top}{\bf{R}}_n\sqrt{n}(\vech {\bf{\Sigma}}(\hat{\theta}_n)-\vech \hat{\bf{\Sigma}}_n)
\end{align*}
on $J_n$, where $\check{\bf{\Sigma}}_{n,\lambda}=\hat{\bf{\Sigma}}_n+\lambda({\bf{\Sigma}}(\hat{\theta}_n)-\hat{\bf{\Sigma}}_n)$ and
\begin{align*}
    {\bf{R}}_n=-\frac{2}{n}\int_0^1 (1-\lambda)\partial^2_{\sigma} {\bf{H}}_{n}(\check{\bf{\Sigma}}_{n,\lambda})d\lambda.
\end{align*}
Since ${\bf{H}}_n({\bf{\Sigma}})$ has a maximum value at ${\bf{\Sigma}}=\hat{\bf{\Sigma}}_n$ on $J_n$, we have
\begin{align*}
    \partial_{\sigma} {\bf{H}}_{n}(\hat{\bf{\Sigma}}_n)=0
\end{align*}
on $J_n$, which yields
\begin{align}
    {\bf{T}}_n&=\sqrt{n}(\vech {\bf{\Sigma}}(\hat{\theta}_n)-\vech \hat{\bf{\Sigma}}_n)^{\top}{\bf{R}}_n\sqrt{n}(\vech {\bf{\Sigma}}(\hat{\theta}_n)-\vech \hat{\bf{\Sigma}}_n) \label{T}
\end{align}
on $J_n$. Note that
\begin{align*}
    \sqrt{n}(\vech {\bf{\Sigma}}(\hat{\theta}_n)-\vech \hat{\bf{\Sigma}}_n)^{\top}\tilde{\bf{R}}_n\sqrt{n}(\vech {\bf{\Sigma}}(\hat{\theta}_n)-\vech \hat{\bf{\Sigma}}_n)
\end{align*}
is also well-defined on $J_n^c$, where 
\begin{align*}
    \tilde{\bf{R}}_n=\left\{
    \begin{array}{ll}
    {\bf{R}}_n & (on\ J_n),\\
    \mathbb{I}_{\bar{p}} & (on\ J_n^c).
    \end{array}\right.  
\end{align*}
Fix $\varepsilon>0$ arbitrarily. Since $\partial^2_{\sigma} {\bf{H}}({\bf{\Sigma}})$ is continuous in ${\bf{\Sigma}}$, we can take a sufficient small $\delta>0$ such that
\begin{align}
    \sup_{{\bf{\Sigma}}\in B_0(\delta)}\bigl|\partial^2_{\sigma} {\bf{H}}({\bf{\Sigma}})-\partial^2_{\sigma} {\bf{H}}({\bf{\Sigma}}(\theta_0))\bigr|<\frac{\varepsilon}{2},
    \label{supSH}
\end{align}
where 
\begin{align*}
    B_{0}(\delta)=\bigl\{{\bf{\Sigma}}\in\mathcal{M}_p^{+}: |{\bf{\Sigma}}-{\bf{\Sigma}}(\theta_0)|\leq \delta  \bigr\}
\end{align*}
and
\begin{align*}
    \partial_{\sigma^{(i)}}\partial_{\sigma^{(j)}}{\bf{H}}({\bf{\Sigma}})&=\frac{1}{2}\tr \bigl\{
    ({\bf{\Sigma}}^{-1})(\partial_{\sigma^{(i)}}{\bf{\Sigma}})({\bf{\Sigma}}^{-1})(\partial_{\sigma^{(j)}}{\bf{\Sigma}})
    \bigr\}-\frac{1}{2}\tr \bigl\{
    ({\bf{\Sigma}}^{-1})(\partial_{\sigma^{(i)}}\partial_{\sigma^{(j)}}{\bf{\Sigma}})
    \bigr\}\\
    &\quad-\frac{1}{2}\tr\bigl\{({\bf{\Sigma}}^{-1})(\partial_{\sigma^{(i)}}{\bf{\Sigma}})({\bf{\Sigma}}^{-1})(\partial_{\sigma^{(j)}}{\bf{\Sigma}})({\bf{\Sigma}}^{-1}){\bf{\Sigma}}(\theta_0)\bigr\}\\
    &\quad+\frac{1}{2}\tr\bigl\{({\bf{\Sigma}}^{-1})(\partial_{\sigma^{(i)}}\partial_{\sigma^{(j)}}{\bf{\Sigma}})({\bf{\Sigma}}^{-1}){\bf{\Sigma}}(\theta_0)\bigr\}\\
    &\quad-\frac{1}{2}\tr\bigl\{({\bf{\Sigma}}^{-1})(\partial_{\sigma^{(j)}}{\bf{\Sigma}})({\bf{\Sigma}}^{-1})(\partial_{\sigma^{(i)}}{\bf{\Sigma}})({\bf{\Sigma}}^{-1}){\bf{\Sigma}}(\theta_0)\bigr\}.
\end{align*}
In an analogous manner to Proposition \ref{supHprop}, it is shown that
\begin{align}
    \sup_{{\bf{\Sigma}}\in B_{0}(\delta)}\Bigl|\frac{1}{n}\partial^2_{\sigma}{\bf{H}}_n({\bf{\Sigma}})-\partial^2_{\sigma}{\bf{H}}({\bf{\Sigma}})\Bigr|\stackrel{p}{\longrightarrow}0 \label{supSHn}
\end{align}
under $H_0$. Since
\begin{align*}
    \tr\bigl\{({\bf{\Sigma}}^{-1})(\partial_{\sigma^{(i)}}{\bf{\Sigma}})({\bf{\Sigma}}^{-1})(\partial_{\sigma^{(j)}}{\bf{\Sigma}})\bigr\}&=\vec(\partial_{\sigma^{(i)}}{\bf{\Sigma}})^{\top}\bigl({\bf{\Sigma}}^{-1}\otimes{\bf{\Sigma}}^{-1}\bigr)
    \vec(\partial_{\sigma^{(j)}}{\bf{\Sigma}})\\
    &=\bigl\{\partial_{\sigma^{(i)}}\vech{\bf{\Sigma}}\bigr\}
    ^{\top}\mathbb{D}_p^{\top}\bigl({\bf{\Sigma}}^{-1}\otimes{\bf{\Sigma}}^{-1}\bigr)\mathbb{D}_p\bigl\{\partial_{\sigma^{(j)}}\vech{\bf{\Sigma}}\bigr\}\\
    &=\bigl\{\partial_{\sigma^{(i)}}\sigma\bigr\}
    ^{\top}\bigl(\mathbb{D}_p^{+}(
    {\bf{\Sigma}}\otimes{\bf{\Sigma}})\mathbb{D}_p^{+\top}\bigr)^{-1}\bigl\{\partial_{\sigma^{(j)}}\sigma\bigr\}\\
    &=\bigl(\bigl(\mathbb{D}_p^{+}(
    {\bf{\Sigma}}\otimes{\bf{\Sigma}})\mathbb{D}_p^{+\top}\bigr)^{-1}\bigr)_{ij},
\end{align*}
one has
\begin{align*}
    \partial_{\sigma^{(i)}}\partial_{\sigma^{(j)}}{\bf{H}}({\bf{\Sigma}}(\theta_0))&=-\frac{1}{2}\tr\bigl\{({\bf{\Sigma}}(\theta_0)^{-1})(\partial_{\sigma^{(i)}}{\bf{\Sigma}}(\theta_0))({\bf{\Sigma}}(\theta_0)^{-1})(\partial_{\sigma^{(j)}}{\bf{\Sigma}}(\theta_0))\bigr\}\\
    &=-\bigl(\bigl(2\mathbb{D}_p^{+}(
    {\bf{\Sigma}}(\theta_0)\otimes{\bf{\Sigma}}(\theta_0))\mathbb{D}_p^{+\top}\bigr)^{-1}\bigr)_{ij}=-({\bf{W}}_0^{-1})_{ij},
\end{align*}
which yields
\begin{align}
    \partial^2_{\sigma} {\bf{H}}({\bf{\Sigma}}(\theta_0))=-{\bf{W}}_0^{-1}. \label{HW}
\end{align}
Furthermore, it holds that
\begin{align*}
    |\check{\bf{\Sigma}}_{n,\lambda}-{\bf{\Sigma}}(\theta_0)|&\leq |\hat{\bf{\Sigma}}_n-{\bf{\Sigma}}(\theta_0)|+\lambda|{\bf{\Sigma}}(\hat{\theta}_n)-\hat{\bf{\Sigma}}_n|\\
    &\leq |\hat{\bf{\Sigma}}_n-{\bf{\Sigma}}(\theta_0)|+|{\bf{\Sigma}}(\hat{\theta}_n)-\hat{\bf{\Sigma}}_n|\\
    &\leq  |\hat{\bf{\Sigma}}_n-{\bf{\Sigma}}(\theta_0)|+
    |{\bf{\Sigma}}(\hat{\theta}_n)-{\bf{\Sigma}}(\theta_0)|+
    |{\bf{\Sigma}}(\theta_0)-\hat{\bf{\Sigma}}_n|\leq\delta
\end{align*}
for $\lambda\in[0,1]$ on $C_n$, where
\begin{align*}
    C_n=\Bigl\{|\hat{\bf{\Sigma}}_n-{\bf{\Sigma}}(\theta_0)|\leq\frac{\delta}{3}\Bigr\}\cap \Bigl\{|{\bf{\Sigma}}(\hat{\theta}_n)-{\bf{\Sigma}}(\theta_0)|\leq\frac{\delta}{3}\Bigr\}.
\end{align*}
Thus, we see from (\ref{HW}) that
\begin{align*}
    \bigl|\tilde{\bf{R}}_n-{\bf{W}}_0^{-1}\bigr|&=\biggl|2\int_0^1 (1-\lambda)\Bigl\{\frac{1}{n}\partial^2_{\sigma} {\bf{H}}_{n}(\check{\bf{\Sigma}}_{n,\lambda})-\partial^2_{\sigma} {\bf{H}}({\bf{\Sigma}}(\theta_0))\Bigr\}d\lambda\biggr|\\
    &\leq 2\int_0^1 (1-\lambda)\Bigl|\frac{1}{n}\partial^2_{\sigma} {\bf{H}}_{n}(\check{\bf{\Sigma}}_{n,\lambda})-\partial^2_{\sigma} {\bf{H}}({\bf{\Sigma}}(\theta_0))\Bigr|d\lambda\\
    &\leq \sup_{{\bf{\Sigma}}\in B_0(\delta)}\Bigl|\frac{1}{n}\partial^2_{\sigma} {\bf{H}}_{n}({\bf{\Sigma}})-\partial^2_{\sigma} {\bf{H}}({\bf{\Sigma}}(\theta_0))\Bigr|\\
    &\leq \sup_{{\bf{\Sigma}}\in B_0(\delta)}\Bigl|\frac{1}{n}\partial^2_{\sigma} {\bf{H}}_{n}({\bf{\Sigma}})-\partial^2_{\sigma} {\bf{H}}({\bf{\Sigma}})\Bigr|
    +\sup_{{\bf{\Sigma}}\in B_0(\delta)}\bigl|\partial^2_{\sigma} {\bf{H}}({\bf{\Sigma}})-\partial^2_{\sigma} {\bf{H}}({\bf{\Sigma}}(\theta_0))\bigr|
\end{align*}
on $C_n\cap J_n$. Since ${\bf{\Sigma}}(\theta_0)$ is positive, Theorems \ref{Qtheorem}-\ref{thetatheorem} imply 
\begin{align*}
    {\bf{P}}\bigl(J_n\bigr)\longrightarrow 1,\quad {\bf{P}}\bigl(C_n\bigr)\longrightarrow 1
\end{align*}
under $H_0$. Consequently, it follows from (\ref{supSH})-(\ref{supSHn}) that
\begin{align*}
   &\quad\ {\bf{P}}\Bigl(\bigl|\tilde{\bf{R}}_n-{\bf{W}}_0^{-1}\bigr|>\varepsilon\Bigr)\\
   &={\bf{P}}\Bigl(\Bigl\{\bigl|\tilde{\bf{R}}_n-{\bf{W}}_0^{-1}\bigr|>\varepsilon\Bigr\}\cap\bigl(C_n\cap J_n\bigr)\Bigr)
   +{\bf{P}}\Bigl(\Bigl\{\bigl|\tilde{\bf{R}}_n-{\bf{W}}_0^{-1}\bigr|>\varepsilon\Bigr\}\cap\bigl(C_n\cap J_n\bigr)^c\Bigr)\\
   &\leq {\bf{P}}\Biggl(\biggl\{\sup_{{\bf{\Sigma}}\in B_0(\delta)}\Bigl|\frac{1}{n}\partial^2_{\sigma} {\bf{H}}_{n}({\bf{\Sigma}})-\partial^2_{\sigma} {\bf{H}}({\bf{\Sigma}})\Bigr|\\
    &\qquad\qquad\qquad+\sup_{{\bf{\Sigma}}\in B_0(\delta)}\bigl|\partial^2_{\sigma} {\bf{H}}({\bf{\Sigma}})-\partial^2_{\sigma} {\bf{H}}({\bf{\Sigma}}(\theta_0))\bigr|>\varepsilon\biggr\}\cap \bigl(C_n\cap J_n\bigr)\Biggr)+ {\bf{P}}\Bigl(C_n^c\cup J_n^c\Bigr)\\
    &\leq {\bf{P}}\biggl(\sup_{{\bf{\Sigma}}\in B_0(\delta)}\Bigl|\frac{1}{n}\partial^2_{\sigma} {\bf{H}}_{n}({\bf{\Sigma}})-\partial^2_{\sigma} {\bf{H}}({\bf{\Sigma}})\Bigr|
    +\sup_{{\bf{\Sigma}}\in B_0(\delta)}\bigl|\partial^2_{\sigma} {\bf{H}}({\bf{\Sigma}})-\partial^2_{\sigma} {\bf{H}}({\bf{\Sigma}}(\theta_0))\bigr|>\varepsilon\biggr)+{\bf{P}}\Bigl(C_n^c\cup J_n^c\Bigr)\\
    &\leq {\bf{P}}\biggl(\sup_{{\bf{\Sigma}}\in B_0(\delta)}\Bigl|\frac{1}{n}\partial^2_{\sigma} {\bf{H}}_{n}({\bf{\Sigma}})-\partial^2_{\sigma} {\bf{H}}({\bf{\Sigma}})\Bigr|>\frac{\varepsilon}{2}\biggr)\\
    &\qquad+{\bf{P}}\biggl(\sup_{{\bf{\Sigma}}\in B_0(\delta)}\bigl|\partial^2_{\sigma} {\bf{H}}({\bf{\Sigma}})-\partial^2_{\sigma} {\bf{H}}({\bf{\Sigma}}(\theta_0))\bigr|>\frac{\varepsilon}{2}\biggr)+{\bf{P}}\bigl(C_n^c\bigr)+{\bf{P}}\bigl(J_n^c\bigr)\longrightarrow 0
\end{align*}
under $H_0$, so that one gets
\begin{align}
    \tilde{\bf{R}}_n\stackrel{p}{\longrightarrow} {\bf{W}}_0^{-1} \label{tildeR}
\end{align}
under $H_0$. As it holds from Lemma \ref{testlemma} that
\begin{align*}
    &\quad\ \sqrt{n}(\vech {\bf{\Sigma}}(\hat{\theta}_n)-\vech \hat{\bf{\Sigma}}_n)\\
    &=\sqrt{n}(\vech {\bf{\Sigma}}(\hat{\theta}_n)-\vech {\bf{\Sigma}}(\theta_0))-\sqrt{n}(\vech \hat{\bf{\Sigma}}_n-\vech {\bf{\Sigma}}(\theta_0))\\
    &=\bigl\{\Delta_0(\Delta_0^{\top}{\bf{W}}_0^{-1}\Delta_0)^{-1}\Delta_0^{\top}{\bf{W}}_0^{-1}-\mathbb{I}_{\bar{p}}\bigr\}\sqrt{n}(\vech \hat{\bf{\Sigma}}_n-\vech {\bf{\Sigma}}(\theta_0))+o_p(1)
\end{align*}
under $H_0$, Theorem \ref{Qtheorem} and (\ref{tildeR}) imply
\begin{align*}
    &\quad\ \sqrt{n}(\vech {\bf{\Sigma}}(\hat{\theta}_n)-\vech \hat{\bf{\Sigma}}_n)^{\top}\tilde{\bf{R}}_n\sqrt{n}(\vech {\bf{\Sigma}}(\hat{\theta}_n)-\vech \hat{\bf{\Sigma}}_n)\\
    &=\sqrt{n}(\vech \hat{\bf{\Sigma}}_n-\vech {\bf{\Sigma}}(\theta_0))^{\top}({\bf{W}}_0^{-1}-{\bf{H}}_0)\sqrt{n}(\vech \hat{\bf{\Sigma}}_n-\vech {\bf{\Sigma}}(\theta_0))+o_p(1)\\
    &\stackrel{d}{\longrightarrow}
    Z_{\bar{p}}^{\top}{\bf{P}}_0Z_{\bar{p}},
\end{align*}
where
\begin{align*}
    {\bf{H}}_0={\bf{W}}_0^{-1}\Delta_0({\bf{\Delta}}_0^{\top}{\bf{W}}_0^{-1}\Delta_0)^{-1}\Delta_0^{\top}{\bf{W}}_0^{-1},\quad 
    {\bf{P}}_0={\bf{W}}_0^{\frac{1}{2}}({\bf{W}}_0^{-1}-{\bf{H}}_0){\bf{W}}_0^{\frac{1}{2}}.
\end{align*}
On the other hand, ${\bf{P}}_0$ is an orthogonal projection matrix, and {\bf{[B1]}} (b) yields 
\begin{align*}
    \rank {\bf{P}}_0=\bar{p}-q,
\end{align*}
so that it is shown that
\begin{align*}
    Z_{\bar{p}}^{\top}{\bf{P}}_0Z_{\bar{p}}\sim \chi^2_{\bar{p}-q}.
\end{align*}
Therefore, for any closed set $C\in\mathbb{R}$, we see from (\ref{T}) that
\begin{align*}
    &\quad\ \limsup_{n\longrightarrow\infty}{\bf{P}}\bigl({\bf{T}}_n\in C\bigr)\\
    &=\limsup_{n\longrightarrow\infty}{\bf{P}}\Bigl(\bigl\{
    \sqrt{n}(\vech {\bf{\Sigma}}(\hat{\theta}_n)-\vech \hat{\bf{\Sigma}}_n)^{\top}{\bf{R}}_n\sqrt{n}(\vech {\bf{\Sigma}}(\hat{\theta}_n)-\vech \hat{\bf{\Sigma}}_n)
    \in C\bigr\}\cap J_n\Bigr)\\
    &\qquad+\limsup_{n\longrightarrow\infty}{\bf{P}}\Bigl(\bigl\{{\bf{T}}_n\in C\bigr\}\cap J_n^c\Bigr)\\
    &\leq\limsup_{n\longrightarrow\infty}{\bf{P}}\Bigl(
    \sqrt{n}(\vech {\bf{\Sigma}}(\hat{\theta}_n)-\vech \hat{\bf{\Sigma}}_n)^{\top}\tilde{\bf{R}}_n\sqrt{n}(\vech {\bf{\Sigma}}(\hat{\theta}_n)-\vech \hat{\bf{\Sigma}}_n)
    \in C\Bigr)\\
    &\qquad+\limsup_{n\longrightarrow\infty}{\bf{P}}\bigl(J_n^c\bigr)
    \leq {\bf{P}}\bigl(\chi^2_{\bar{p}-q}\in C\bigr)
\end{align*}
under $H_0$, which completes the proof.
\end{proof}
\begin{proposition}\label{supH1prop}
Suppose that {\bf{[A1]}}-{\bf{[A4]}} hold. Then, as $n\longrightarrow\infty$,
\begin{align*}
    \sup_{\theta\in\Theta}\Bigl|\frac{1}{n}{\bf{T}}_n(\theta)-{\bf{U}}(\theta)\Bigr|\stackrel{p}{\longrightarrow} 0
\end{align*}
under $H_1$.
\end{proposition}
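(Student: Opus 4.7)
The strategy is parallel to the proof of Proposition \ref{supHprop}: compute $\frac{1}{n}{\bf{T}}_n(\theta)-{\bf{U}}(\theta)$ explicitly, bound its supremum over $\Theta$ by a finite sum of terms each converging to $0$ in probability, and conclude. Setting $a_n=N_n/n$ and substituting the definitions of ${\bf{T}}_n(\theta)$ and ${\bf{U}}(\theta)$ gives
\begin{align*}
    \frac{1}{n}{\bf{T}}_n(\theta)-{\bf{U}}(\theta)
    &=(a_n-1)\log\det{\bf{\Sigma}}(\theta)-\bigl\{a_n\log\det\tilde{\bf{\Sigma}}_n-\log\det{\bf{\Sigma}}^*\bigr\}\\
    &\qquad+\tr\bigl\{{\bf{\Sigma}}(\theta)^{-1}(a_n\tilde{\bf{\Sigma}}_n-{\bf{\Sigma}}^*)\bigr\}-(a_n-1)p.
\end{align*}
Applying the triangle inequality together with $|\tr\{AB\}|\leq |A||B|$ yields
\begin{align*}
    \sup_{\theta\in\Theta}\Bigl|\frac{1}{n}{\bf{T}}_n(\theta)-{\bf{U}}(\theta)\Bigr|
    &\leq |a_n-1|\Bigl(\sup_{\theta\in\Theta}|\log\det{\bf{\Sigma}}(\theta)|+p\Bigr)+\bigl|a_n\log\det\tilde{\bf{\Sigma}}_n-\log\det{\bf{\Sigma}}^*\bigr|\\
    &\qquad+\sup_{\theta\in\Theta}\bigl|{\bf{\Sigma}}(\theta)^{-1}\bigr|\cdot\bigl|a_n\tilde{\bf{\Sigma}}_n-{\bf{\Sigma}}^*\bigr|.
\end{align*}
Compactness of $\Theta$ together with continuity of $\theta\mapsto {\bf{\Sigma}}(\theta)$ and positive definiteness of ${\bf{\Sigma}}(\theta)$ on $\Theta$ ensure that $\sup_{\theta\in\Theta}|\log\det{\bf{\Sigma}}(\theta)|$ and $\sup_{\theta\in\Theta}|{\bf{\Sigma}}(\theta)^{-1}|$ are finite constants.

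It then suffices to establish $a_n\stackrel{p}{\longrightarrow}1$ and $\tilde{\bf{\Sigma}}_n\stackrel{p}{\longrightarrow}{\bf{\Sigma}}^*$ under ${\bf{P}}_{{\bf{\Sigma}}^*}$, and combine these via Slutsky's theorem and the continuous mapping theorem for $\log\det$ to conclude that each of the three bounding terms above converges to $0$ in probability. The first convergence follows from Proposition \ref{Nprop} applied with ${\bf{\Sigma}}^*$ in place of ${\bf{\Sigma}}_0$; the second from Theorem \ref{Qtheorem}, which yields $\hat{\bf{\Sigma}}_n\stackrel{p}{\longrightarrow}{\bf{\Sigma}}^*$, combined with the observation that positive definiteness of ${\bf{\Sigma}}^*$ forces ${\bf{P}}(J_n)\to 1$, so that $\tilde{\bf{\Sigma}}_n$ coincides with $\hat{\bf{\Sigma}}_n$ with probability tending to one and inherits the same limit.

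There is no substantial obstacle; the only subtlety is that Proposition \ref{Nprop} and Theorem \ref{Qtheorem} are stated under ${\bf{P}}_{{\bf{\Sigma}}_0}$, but their proofs invoke only {\bf{[A1]}}-{\bf{[A4]}} and the moment estimates of Propositions \ref{X1momentprop}-\ref{X12momentprop}, in which the true value enters only as a generic element of $\mathcal{M}_p^+$; hence both results transfer verbatim to ${\bf{P}}_{{\bf{\Sigma}}^*}$ for any true ${\bf{\Sigma}}^*\in\mathcal{M}_p^+$ produced by the model under $H_1$.
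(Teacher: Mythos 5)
Your proposal is correct and follows essentially the same route as the paper, which proves this proposition ``in an analogous manner to Proposition \ref{supHprop}'': decompose $\frac{1}{n}{\bf{T}}_n(\theta)-{\bf{U}}(\theta)$, bound the supremum by compactness-based constants times quantities converging to zero, and invoke $N_n/n\stackrel{p}{\longrightarrow}1$ and $\hat{\bf{\Sigma}}_n\stackrel{p}{\longrightarrow}{\bf{\Sigma}}^*$ (with ${\bf{P}}(J_n)\to 1$) under ${\bf{P}}_{{\bf{\Sigma}}^*}$. Your remark that Proposition \ref{Nprop} and Theorem \ref{Qtheorem} transfer to the true value ${\bf{\Sigma}}^*$ under $H_1$ is exactly the point the paper leaves implicit, so no gap remains.
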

\begin{proof}
    In an analogous manner to Proposition \ref{supHprop}, we can show the result.
\end{proof}
\begin{proposition}\label{H1prop}
Suppose that {\bf{[A1]}}-{\bf{[A4]}} and {\bf{[B2]}} hold. Then, as $n\longrightarrow\infty$, 
\begin{align*}
    \hat{\theta}_n\stackrel{p}{\longrightarrow}\theta^*
\end{align*}
under $H_1$.
\end{proposition}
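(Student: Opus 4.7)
The plan is to mirror the consistency argument used in Theorem \ref{thetatheorem}, but with ${\bf{U}}(\theta)$ in the role of $-2{\bf{H}}(\theta)$ and $\theta^*$ in place of $\theta_0$. The key observation is that on the event $J_n$ one has $\tilde{\bf{\Sigma}}_n=\hat{\bf{\Sigma}}_n$, and the same manipulation used in the definition of ${\bf{T}}_n$ gives
\begin{align*}
    {\bf{T}}_n(\theta)=-2{\bf{H}}_n(\theta)-N_n\log\det\hat{\bf{\Sigma}}_n-N_np,
\end{align*}
where the last two summands do not depend on $\theta$. Hence on $J_n$, maximizing ${\bf{H}}_n(\theta)$ over $\theta\in\Theta$ is equivalent to minimizing ${\bf{T}}_n(\theta)$, so that $\hat{\theta}_n=\arg\min_{\theta\in\Theta}{\bf{T}}_n(\theta)$ on $J_n$.

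Next I would note that, since ${\bf{\Sigma}}^*$ is positive definite, the argument of Theorem \ref{Qtheorem} applied under ${\bf{P}}_{{\bf{\Sigma}}^*}$ gives $\hat{\bf{\Sigma}}_n\stackrel{p}{\longrightarrow}{\bf{\Sigma}}^*$ and therefore ${\bf{P}}(J_n)\longrightarrow 1$ under $H_1$. Combined with compactness of $\Theta$ and continuity of ${\bf{U}}$, assumption {\rm\textbf{[B2]}} implies that for every $\varepsilon>0$ there is $\delta>0$ with
\begin{align*}
    |\theta-\theta^*|>\varepsilon\ \Longrightarrow\ {\bf{U}}(\theta)-{\bf{U}}(\theta^*)>\delta.
\end{align*}
On $J_n$, the inequality ${\bf{T}}_n(\hat{\theta}_n)\leq{\bf{T}}_n(\theta^*)$ combined with Proposition \ref{supH1prop} yields
\begin{align*}
    {\bf{U}}(\hat{\theta}_n)-{\bf{U}}(\theta^*)\leq \Bigl({\bf{U}}(\hat{\theta}_n)-\tfrac{1}{n}{\bf{T}}_n(\hat{\theta}_n)\Bigr)+\Bigl(\tfrac{1}{n}{\bf{T}}_n(\theta^*)-{\bf{U}}(\theta^*)\Bigr)\leq 2\sup_{\theta\in\Theta}\Bigl|\tfrac{1}{n}{\bf{T}}_n(\theta)-{\bf{U}}(\theta)\Bigr|.
\end{align*}
The standard estimate
\begin{align*}
    {\bf{P}}(|\hat{\theta}_n-\theta^*|>\varepsilon)\leq {\bf{P}}\bigl(\{{\bf{U}}(\hat{\theta}_n)-{\bf{U}}(\theta^*)>\delta\}\cap J_n\bigr)+{\bf{P}}(J_n^c)
\end{align*}
then converges to $0$ by Proposition \ref{supH1prop} and ${\bf{P}}(J_n)\longrightarrow 1$.

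The plan is essentially the classical M-estimator consistency scheme, so there is no serious analytical obstacle beyond the routine bookkeeping; the main point of care is to handle the event $J_n^c$ correctly, so that the identification of $\hat{\theta}_n$ as the minimizer of ${\bf{T}}_n$, which is only valid on $J_n$, does not create a gap in the argument. Since Proposition \ref{supH1prop} is already formulated in terms of ${\bf{U}}(\theta)$, which encodes ${\bf{\Sigma}}^*$ rather than ${\bf{\Sigma}}(\theta_0)$, no further moment or empirical-process analysis is needed, and the conclusion $\hat{\theta}_n\stackrel{p}{\longrightarrow}\theta^*$ under $H_1$ follows.
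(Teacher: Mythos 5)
Your proof is correct and is essentially the argument the paper intends (its proof of Proposition \ref{H1prop} is only a pointer to the consistency part of Theorem \ref{thetatheorem}): uniform convergence from Proposition \ref{supH1prop}, the separation of the minimum of ${\bf{U}}$ obtained from \textbf{[B2]} together with compactness and continuity, and the standard M-estimation bound, all applied under ${\bf{P}}_{{\bf{\Sigma}}^*}$ where Theorem \ref{Qtheorem} gives $\hat{\bf{\Sigma}}_n\stackrel{p}{\longrightarrow}{\bf{\Sigma}}^*$. The only cosmetic difference is that you route the argument through ${\bf{T}}_n(\theta)$ on $J_n$, which forces the extra bookkeeping with ${\bf{P}}(J_n)\longrightarrow 1$; this is handled correctly, but one can avoid $J_n$ altogether by arguing directly with $\frac{1}{n}{\bf{H}}_n(\theta)$ and its $H_1$-limit $-\frac{1}{2}\log\det{\bf{\Sigma}}(\theta)-\frac{1}{2}\tr\bigl\{{\bf{\Sigma}}(\theta)^{-1}{\bf{\Sigma}}^*\bigr\}$, which differs from $-\frac{1}{2}{\bf{U}}(\theta)$ only by a constant, exactly as in the proof of (\ref{thetacons}).
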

\begin{proof}
     In a similar way to Theorem \ref{thetatheorem}, the result can be proven.
\end{proof}
\begin{proof}[\bf{Proof of Theorem \ref{testtheorem2}}]
By the continuous mapping theorem, it holds from Proposition \ref{H1prop} that
\begin{align*}
    {\bf{U}}(\hat{\theta}_n)\stackrel{p}{\longrightarrow}{\bf{U}}(\theta^*)
\end{align*}
under $H_1$, so that Proposition \ref{supH1prop} shows
\begin{align*}
    \Bigl|\frac{1}{n}{\bf{T}}_n(\hat{\theta}_n)-{\bf{U}}(\theta^*)\Bigr|&\leq \Bigl|\frac{1}{n}{\bf{T}}_n(\hat{\theta}_n)-{\bf{U}}(\hat{\theta}_n)\Bigr|+\bigl|{\bf{U}}(\hat{\theta}_n)-{\bf{U}}(\theta^*)\bigr|\\
    &\leq\sup_{\theta\in\Theta}\Bigl|\frac{1}{n}{\bf{T}}_n(\theta)-{\bf{U}}(\theta)\Bigr|+\bigl|{\bf{U}}(\hat{\theta}_n)-{\bf{U}}(\theta^*)\bigr|\stackrel{p}{\longrightarrow}0
\end{align*}
under $H_1$, which yields
\begin{align*}
    \frac{1}{n}{\bf{T}}_n(\hat{\theta}_n)\stackrel{p}{\longrightarrow} {\bf{U}}(\theta^*)
\end{align*}
under $H_1$. Since ${\bf{U}}(\theta^*)>0$ under $H_1$, we obtain
\begin{align*}
    {\bf{P}}\Bigl({\bf{T}}_n(\hat{\theta}_n)>\chi^2_{\bar{p}-q}\Bigr)=
    1-{\bf{P}}\biggl(\frac{1}{n}{\bf{T}}_n(\hat{\theta}_n)\leq\frac{\chi^2_{\bar{p}-q}}{n}\biggr) \longrightarrow 1
\end{align*}
under $H_1$, which completes the proof.
\end{proof}


\begin{thebibliography}{10}
\fontsize{10pt}{16pt}\selectfont

\bibitem{Amorino(2020)}
Amorino, C. and Gloter, A. (2020). Contrast function estimation for the drift parameter of ergodic jump diffusion process. \textit{Scandinavian Journal of Statistics}, \textbf{47(2)}, 279-346.
\bibitem{Amorino(2021)}
Amorino, C. and Gloter, A. (2021). Joint estimation for volatility and drift parameters of ergodic jump diffusion processes via contrast function. \textit{Statistical Inference for Stochastic Processes}, \textbf{24}, 61-148.
\bibitem{Cziraky(2004)} Czir{\'a}ky, D. (2004). Estimation of dynamic structural equation models with latent variables. \textit{Advances in Methodology and Statistics}, \textbf{1(1)}, 185-204.
\bibitem{Genon(1993)}  Genon-Catalot, V. 
and Jacod, J. (1993). On the estimation of the diffusion coefficient for multidimensional diffusion processes. \textit{Annales de l'Institut Henri Poincar{\'e} (B) Probabilit{\'e}s et Statistiques,} \textbf{29}, 119-151. 
\bibitem{Hall(1981)} Hall, P. and Heyde, C. C. (1981). \textit{Martingale limit theory and its application.} \textit{Academic press}.
\bibitem{Harville(1998)} Harville, D. A. (1998). \textit{Matrix algebra from a statistician's perspective.} \textit{Taylor \& Francis}.
\bibitem{Huang AIC(2017)} Huang, P. H. (2017). Asymptotics of AIC, BIC, and RMSEA for model selection in structural equation modeling. \textit{Psychometrika}, \textbf{82(2)}, 407-426.
\bibitem{Huang(2017)} Huang, P. H., Chen, H. and Weng, L. J. (2017). A penalized likelihood method for structural equation modeling. \textit{Psychometrika}, \textbf{82(2)}, 329-354.
\bibitem{Inatsugu(2021)} Inatsugu, H. and Yoshida, N. (2021). Global jump filters and quasi-likelihood analysis for volatility. \textit{Annals of the Institute of Statistical Mathematics}, \textbf{73(3)}, 555-598.
\bibitem{Jacobucci(2016)} Jacobucci, R., Grimm, K. J. and McArdle, J. J. (2016). Regularized structural equation modeling. \textit{Structural equation modeling: a multidisciplinary journal}, \textbf{23(4)}, 555-566.
\bibitem{kessler(1997)} Kessler, M. (1997). Estimation of an ergodic diffusion from discrete observations. \textit{Scandinavian Journal of Statistics}, \textbf{24(2)}, 211-229.
\bibitem{Kusano(2023)} Kusano, S. and Uchida, M. (2023). Structural equation modeling with latent variables for diffusion processes and its application to sparse estimation. arXiv preprint arXiv:2305.02655v2.
\bibitem{Kusano(JJSD)} Kusano, S. and Uchida, M. (2024a). Sparse inference of structural equation modeling with latent variables for diffusion processes. \textit{Japanese Journal of Statistics and Data Science}, {\bf{7}}, 101-150.
\bibitem{Kusano(2024c)} Kusano, S. and Uchida, M. (2024b). Quasi-Akaike information criterion of SEM with latent variables for diffusion processes. \textit{Japanese Journal of Statistics and Data Science}, Version of Record.
\bibitem{Kusano(2024e)} Kusano, S. and Uchida, M. (2024d). QBIC of SEM for diffusion processes from discrete observations. arXiv preprint arXiv:2407.01040.
\bibitem{Mancini(2004)} Mancini, C. (2004). Estimation of the characteristics of the jumps of a general poisson-diffusion model.
\textit{Scandinavian Actuarial Journal}, {\bf{2004}}(1), 42--52.
\bibitem{Ogihara(2011)} Ogihara, T. and Yoshida, N. (2011). Quasi-likelihood analysis for the stochastic differential equation with jumps. {\it Statistical inference for stochastic processes}, {\bf{14}}, 189-229.
\bibitem{Platen(2010)} Platen, E. and Bruti-Liberati, N. (2010). \textit{Numerical solution of stochastic differential equations with jumps in finance}. \textit{Springer Science \& Business Media}.
\bibitem{Shapiro(1983)} Shapiro, A. (1983). Asymptotic distribution theory in the analysis of covariance structures. \textit{South African Statistical Journal}, {\bf{17}}(1), 33-81.
\bibitem{Shapiro(1985)} Shapiro, A. (1985). Asymptotic equivalence of minimum discrepancy function estimators to GLE estimators. South \textit{African Statistical Journal}, \textbf{19(1)}, 73-81.
\bibitem{Shimizu-Yoshida_JP}
Shimizu, Y. and Yoshida, N. (2003).
Janpugata kakusankatei no risankansokukarano suiteinitsuite.
 (Estimation of diffusion processes with jumps based on discretely sampled observations).
The 2003 Japanese Joint Statistical Meeting, Meijo University.
\bibitem{Shimizu(2006)} Shimizu, Y. and Yoshida, N. (2006). Estimation of parameters for diffusion processes with jumps from discrete observations. \textit{Statistical Inference for Stochastic Processes}, {\bf{9}}, 227-277.
\bibitem{Uchi-Yoshi(2012)} 
Uchida, M. and Yoshida, N. (2012). 
Adaptive estimation of an ergodic diffusion process based on sampled data. 
\textit{Stochastic Processes and their Applications}, 
\textbf{122(8)}, 2885-2924. 
\bibitem{Yoshida(1992)} 
Yoshida, N. (1992). 
Estimation for diffusion processes from discrete observation.
{\it Journal of Multivariate Analysis}, {\bf 41}, 220--242. 
\end{thebibliography}
\end{document}